\documentclass
[11pt]
{amsart}
\usepackage{amsmath,amsfonts,amssymb,amsthm}
\usepackage{amscd}
\usepackage{latexsym}
\usepackage[usenames]{color}
\usepackage{mathrsfs}

\date{}

\usepackage[left=1in,right=1in,top=1in,bottom=1in]{geometry}

\input xy  \xyoption{all}
\newtheorem{deff}{Definition}[section]
\newtheorem{lemma}[deff]{Lemma}
\newtheorem{theorem}[deff]{Theorem}
\newtheorem{corollary}[deff]{Corollary}

\newtheorem{proposition}[deff]{Proposition}
\newtheorem{fact}[deff]{Fact}
\newtheorem{em-example}[deff]{Example}
\newtheorem{em-def}[deff]{Definition}        
\newtheorem{em-remark}[deff]{Remark}         
\newtheorem{em-question}[deff]{Question}

\newtheorem{claim}{Claim}

\newenvironment{example}{\begin{em-example} \em }{ \end{em-example}}
\newenvironment{definition}{\begin{em-def} \em  }{ \end{em-def}}
\newenvironment{remark}{\begin{em-remark} \em }{\end{em-remark}}
\newenvironment{question}{\begin{em-question}\em }{\end{em-question}}

\newcommand\DEQ{\hfill $\bigtriangleup$ \medskip}

\def\ker{\mathop{\rm ker}}

\def\:{\nobreak \hskip .1111em\mathpunct {}\nonscript \mkern
-\thinmuskip {:}\hskip .3333emplus.0555em\relax}

\catcode`\@=12
\def\T{{\mathbb T}}

\def\Z{{\mathbb Z}}
\def\N{{\mathbb N}}

\def\R{{\mathbb R}}
\def\Q{{\mathbb Q}}

\def\L{{\mathcal L}}

\def\cont{\mathfrak c}

\newcommand{\cc}{countably\ compact}

\def\cont{\mathfrak{c}}

\def\Zc{$\mathscr{Z}_c$}
\def\Zgen{$\mathscr{Z}$}
\def\Zm{$\mathscr{Z}_m$}
\def\Zcm{$\mathscr{Z}_{cm}$}

\def\Ptd{$\mathscr{E}_{td}$}
\def\Pt{$\mathscr{E}_{t}$}
\def\Pmin{$\mathscr{E}_{min}$}
\def\Avoid#1{$\mathscr{A}_{{#1}}$}

\def\pt#1{#1}
\def\jn#1{{\sl #1}}

\title[Characterizing 
Lie groups]{Characterizing Lie groups by controlling their zero-dimensional subgroups}

\begin{document}

\author[D. Dikranjan]{Dikran Dikranjan}
\address[D. Dikranjan]{Dipartimento di Matematica e Informatica\\
Universit\`{a} di Udine\\
Via delle Scienze  206, 33100 Udine\\
Italy}
\email{dikran.dikranjan@uniud.it}

\author[D. Shakhmatov]{Dmitri Shakhmatov}
\address[D. Shakhmatov]{Division of Mathematics, Physics and Earth Sciences\\
Graduate School of Science and Engineering\\
Ehime University\\
Matsuyama 790-8577\\
Japan}
\email{dmitri.shakhmatov@ehime-u.ac.jp}

\thanks{{\em 2010 Mathematics Subject Classification.\/}  22E15 (primary),  22C05, 22D05, 22E40, 22E99 (secondary)}

\keywords{discrete subgroup, minimal group, totally minimal group, pseudocompact group, countably compact group, compact group}

\thanks{The second named author was partially supported by the Grant-in-Aid for
Scientific Research~(C) No.~22540089 by the Japan Society for the Promotion of Science (JSPS)}

\maketitle
\begin{abstract}
We provide characterizations of Lie groups as compact-like groups in which all closed zero-dimensional metric (compact) subgroups are discrete. The ``compact-like'' properties we consider include (local) compactness, (local) $\omega$-boundedness, (local) countable compactness, (local) precompactness, (local)
minimality  and sequential completeness. Below is a sample of our characterizations:

(i)~A topological group is a Lie group if and only if it is locally compact and has no infinite compact metric zero-dimensional subgroups.
 
(ii)~An abelian topological group $G$ is a Lie group if and only if $G$ is  locally minimal, locally precompact and all closed metric zero-dimensional subgroups of $G$ are discrete.

(iii)~An abelian topological group is a compact Lie group if and only if it is minimal and has no infinite closed metric zero-dimensional subgroups.

(iv)~ An infinite topological group is a compact Lie group if and only if it is 
 sequentially complete, precompact, locally  minimal, 
contains 
a non-empty open
connected subset 
and all its compact 
metric zero-dimensional subgroups are finite. 
\end{abstract}

{\em All topological groups considered in this paper are assumed to be Hausdorff.\/}

As usual, $\cont$ denotes the cardinality of the continuum.
 
Recall that a topological space $X$ is:
\begin{itemize}
\item
{\em $\omega$-bounded\/} if the closure of every countable subset of $X$ is compact,
\item
{\em countably compact\/} if every countable open cover of $X$ has a finite subcover,
\item
{\em pseudocompact\/} if every real-valued continuous function defined on $X$ is bounded.
\item
{\em zero-dimensional\/} if $X$ has a base consisting of sets that are simultaneously open and closed in $X$.  
\end{itemize}

A topological group $G$ is {\em locally $\omega$-bounded\/} ({\em locally countably compact\/}, {\em locally pseudocompact\/})
if it has an open neighbourhood $U$ of its identity element whose closure $\overline{U}$ is $\omega$-bounded (countably compact, pseudocompact, respectively). We say that a topological group is {\em (locally) precompact\/} if its two-sided uniformity completion is (locally) compact. Recall that a topological group $G$ is called \emph{sequentially complete\/} if every Cauchy sequence in $G$ with respect to the two-sided uniformity of $G$ converges to some element of $G$ \cite{DT1,DT2}. For metrizable groups, sequential completeness coincides with completeness, while pseudocompactness coincides with compactness. 

Recall that a  topological group $G$ is called \emph{minimal\/} if every continuous isomorphism $f:G\to H$, where $H$ is Hausdorff  topological group,  is a topological isomorphism  \cite{S}. 
Clearly, all compact groups are  minimal. 
Easy examples show that  the converse does not  hold in general.  Nevertheless, a somewhat weaker implication holds in the case of abelian groups:

\begin{fact}
\label{precompactness:theorem:for:minimal:groups}
{\rm (Prodanov and Stoyanov; see \cite{DPS})} A minimal abelian group $G$ is precompact.
\end{fact}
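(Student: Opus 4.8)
The statement is the deep theorem of Prodanov and Stoyanov recorded in \cite{DPS}; here is the shape of the argument one would follow.

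The basic reduction is supplied by the \emph{Minimality Criterion} (Banaschewski, Stephenson, Prodanov): a dense subgroup $H$ of a topological group $K$ is minimal if and only if $K$ is minimal and $H$ is \emph{essential} in $K$, i.e.\ $H\cap N\neq\{0\}$ for every nontrivial closed subgroup $N$ of $K$. Applying the easy direction with $H=G$ and $K=\widetilde G$ the (Raikov) completion of $G$, we learn that $\widetilde G$ is again minimal, and of course complete. Hence it suffices to treat the case $G=\widetilde G$, that is, to show: a complete minimal abelian group is compact. Equivalently, assuming $G$ is complete abelian and \emph{not} compact, one must exhibit a strictly coarser Hausdorff group topology on $G$.

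The locally compact case is comparatively soft. By the Pontryagin--van Kampen structure theorem $G\cong\R^n\times G_0$ with $G_0$ containing a compact open subgroup $C$. If $n>0$ one is done, since $\R$ carries a strictly coarser (non-complete, precompact) Hausdorff group topology, e.g.\ that induced by a dense one-parameter embedding of $\R$ into a torus; a coarser topology on a factor of a product is coarser on the product. If $n=0$ but $G_0/C$ is infinite, then $G_0/C$ is an infinite discrete abelian group, hence not minimal (every infinite abelian group carries a non-discrete precompact Hausdorff topology, because $\Chom(G_0/C,\T)$ separates points), and pulling such a topology back along $G_0\to G_0/C$ while keeping the original compact topology on $C$ gives a strictly coarser Hausdorff group topology on $G_0\cong G$. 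So $n=0$ and $G_0/C$ is finite, whence $G$ is compact.

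The real content is the non-locally-compact case, where no structure theorem is available. One reduces — via a careful structural analysis of minimal abelian groups involving the connected component, the torsion part and its $p$-primary components, the divisible part, and the subgroups $p^nG$ — to understanding minimal (equivalently, essentially embeddable) topologies on groups assembled from the ``$p$-adic building blocks'' $\Z(p^\infty)$, $\J_p$ and $\Q_p$ (together with $\R$ on the torsion-free side). The decisive local input is \emph{Prodanov's Lemma} — $\Z(p^\infty)$ admits a \emph{unique} minimal group topology, the precompact one induced from $\T$ — together with the analogous rigidity of $\J_p$ and $\Q_p$, whose essential subgroups are precisely the infinite, respectively unbounded, ones. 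Combining these through the bounded-exponent case, the torsion-free case, and a transfinite assembly over the primary components shows that any complete abelian group built from infinitely many such pieces in a non-compact fashion contains a closed subgroup meeting $G$ trivially, contradicting essentiality.

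The main obstacle is exactly this last step. Lacking the local-compactness structure theorem, the argument must work directly with the lattice of closed subgroups of infinite products of $p$-adic modules and perform a delicate back-and-forth construction of a closed subgroup avoiding a prescribed essential subgroup; this is the long and technical heart of the Prodanov--Stoyanov theorem, which is why the result is quoted here rather than reproved.
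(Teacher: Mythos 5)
The paper gives no proof of this statement: it is imported from \cite{DPS} as a Fact, so there is no in-house argument to compare yours against, and your submission is likewise an outline that openly defers the technical core. As an outline it is broadly faithful to the shape of the Prodanov--Stoyanov proof, but two points need correction. First, in the locally compact case your gluing step fails as written: with $C$ the compact open subgroup and $\pi\colon G_0\to G_0/C$ the projection, every candidate identity neighbourhood of the form $W+\pi^{-1}(V)$ (or $W\cap\pi^{-1}(V)$) either contains $\pi^{-1}(V)\supseteq C$ or collapses back to $W$, so ``pull back $\sigma$ from the quotient while keeping the original topology on $C$'' produces either a non-Hausdorff topology or the original one; making $C$ closed but non-open while preserving its subspace topology is straightforward only when the extension splits (e.g.\ $G_0=C\times D$). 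The clean argument for the whole locally compact abelian case is that continuous characters separate points by Pontryagin duality, so the Bohr modification $\tau^+$ is a coarser Hausdorff precompact group topology; minimality forces $\tau=\tau^+$, and completeness then upgrades precompactness to compactness.

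Second, this Bohr-topology mechanism is also the engine of the general proof, which your sketch does not quite capture: the theorem reduces to showing that the continuous characters of a minimal abelian group separate points, after which $\tau=\tau^+$ is precompact by minimality. What is called \emph{Prodanov's Lemma} in \cite{DPS} is not the uniqueness of the minimal topology on $\Z(p^\infty)$ but a measure-theoretic statement about a compact family of (possibly discontinuous) characters whose integral against a Radon measure is continuous at the identity; the analysis of $\Z(p^\infty)$, the $p$-adic integers and the closures of monothetic subgroups in the completion, combined with essentiality, then rules out a point that no continuous character separates from $0$. So your description conveys the flavour of the hard part but mislabels the key lemma and omits the actual reduction that drives the argument; since both you and the paper ultimately defer to \cite{DPS}, this is a matter of accuracy of the summary rather than a gap relative to the paper.
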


A common generalization of locally compact groups and minimal groups was proposed by Morris and Pestov.   A topological group $(G, \tau)$ is {\em locally minimal} if there exists a neighborhood $V$ of $e_G$  such that whenever $\sigma\subseteq \tau$  is a Hausdorff group topology on $G$ such that $V$ is a $\sigma$-neighborhood of  $e_G$,  then $\sigma=\tau$ \cite{MP}. Besides locally compact groups, the class of locally minimal groups contains also all additive groups of normed vector topological spaces \cite{ACDD,MP}.  We refer the reader to \cite{ACDD,ACDD2,DM} for some recent progress in this area. 

The following diagram summarizes the connections between the compactness-like properties considered in this paper. 

\medskip
\begin{center}
${\xymatrix@!0@C5.0cm@R=1.2cm{
\mbox{minimal} \ar@{->}[r] & \mbox{locally minimal} & \\
\mbox{compact}  \ar@{->}[r] \ar@{->}[d] \ar@{->}[u] & \mbox{locally compact} \ar@{->}[r]  \ar@{->}[d] \ar@{->}[u] & \mbox{complete} \ar@{->}[dd] \\
\mbox{$\omega$-bounded}  \ar@{->}[r] \ar@{->}[d] & \mbox{locally $\omega$-bounded}\ar@{->}[d]   &  \\
\mbox{countably compact}  \ar@{->}[r] \ar@{->}[d] & \mbox{locally countably compact}\ar@{->}[d] \ar@{->}[r]  & \mbox{sequentially complete} \\
\mbox{pseudocompact}  \ar@{->}[r] \ar@{->}[d] & \mbox{locally pseudocompact}\ar@{->}[d] & \\
\mbox{precompact}  \ar@{->}[r] & \mbox{locally precompact} & \\
   }}$
\end{center}
\medskip
\begin{center}
Diagram 1.
\end{center}
\medskip

None of the arrows in this diagram is reversible. For each one of the properties $\mathfrak P$ in the first column, except 
``minimal''
(i.e., for $\mathfrak P\in \{$compact,  $\omega$-bounded, countably compact, pseudocompact, precompact$\}$), 
$\mathfrak P$ is equivalent to ``precompact and locally $\mathfrak P$''.

\section{Exotic tori and exotic Lie groups}\label{Exotic}

A topological group $G$ is called an {\em NSS group\/} (an abbreviation for No Small Subgroups) if a suitable neighborhood of its identity element contains only the trivial subgroup. 

A Lie group is a set endowed with compatible structures of a group and of an analytic manifold over the field of real numbers.  The celebrated Hilbert's fifth problem was asking whether the condition ``analytic manifold'' can be relaxed to ``topological manifold''.   An essential ingredient in the positive solution of the problem was the following criterion obtained by Glushkov \cite{G}: {\em A locally compact group  is a Lie group if and only if it is an NSS group\/}. 

The following definition is due to Dikranjan and Prodanov \cite{DP2}.

\begin{definition}
\label{four:properties}
Let 
$\mathscr{C}$ be a class of topological groups. 
For a topological group $G$, consider
the following properties: 
\begin{itemize}
\item[(\Ptd)] for every closed subgroup  $H$ of $G$, the torsion subgroup $t(H)$ of $H$ is dense in $H$;
\item[(\Pt)] every closed non-trivial subgroup of $G$ contains non-trivial torsion elements; 
\item[(\Pmin)]
every closed non-trivial subgroup of $G$ contains a minimal (with respect to the set inclusion) closed non-trivial subgroup of $G$;
\item[(\Avoid{\mathscr{C}})] $G$ contains no subgroups topologically isomorphic to any member of $\mathscr{C}$.
\end{itemize}
\end{definition}

As was demonstrated in \cite{DP2} and \cite{DSt}, the following two classes $\mathscr{C}$ are 
of special importance.
\begin{definition}
Define $\mathscr{P}=\{\mathbb{Z}_p:p\in \mathbb{P}\}$ and  $\mathscr{P}^*=\{\mathbb{Z}\}\cup \mathscr{P}$, where $\mathbb{P}$ is the set of all prime numbers, $\mathbb{Z}_p$ is the topological group of $p$-adic integers and $\Z$ is the group of integers with the discrete topology.
\end{definition}

A compact abelian Lie group $G$ has the form $G=\T^m \times F$, for some finite abelian group $F$, so $t(G) =(\Q/\Z)^m \times F$ is dense in $G$. 
Since every compact Lie group is covered by its compact abelian Lie subgroups, it follows that compact Lie groups satisfy property \Ptd. The implications 
\Ptd$\to$\Pt$\to$\Pmin\  are obvious. For every prime number $p$, the group $\Z_p$ of $p$-adic integers does not satisfy \Pmin, so  one has also the implication
\Pmin$\to$\Avoid{\mathscr{P}}. Therefore, 
\begin{equation}
\label{chains:of:Es}
\mbox{compact Lie} \to \mbox{\Ptd} \to \mbox{\Pt} \to \mbox{\Pmin} \to \mbox{\Avoid{\mathscr{P}}}.
\end{equation}
In this chain of implications,
the four properties from Definition \ref{four:properties} turned out to be equivalent for all compact groups:

\begin{theorem}
\label{exotic:tori}
{\rm (\cite{DSt}; earlier proved in \cite{DP2} in the abelian case)}
For every compact group, conditions 
\Ptd, \Pt, \Pmin\ 
and \Avoid{\mathscr{P}} are equivalent.
\end{theorem}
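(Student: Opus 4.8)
The plan is to prove the cycle of implications
$$\text{\Ptd} \to \text{\Pt} \to \text{\Pmin} \to \text{\Avoid{\mathscr{P}}} \to \text{\Ptd},$$
the first three of which are already observed to be trivial in the excerpt (and hold for arbitrary topological groups). So the real content is the single implication $\text{\Avoid{\mathscr{P}}} \to \text{\Ptd}$ for a compact group $G$. I would argue by contraposition: assuming $G$ is compact and fails \Ptd, I will produce a closed subgroup of $G$ topologically isomorphic to some $\mathbb{Z}_p$, $p \in \mathbb{P}$.

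First I would reduce to the abelian case. If $G$ fails \Ptd, there is a closed subgroup $H$ with $\overline{t(H)} \neq H$; pick $x \in H \setminus \overline{t(H)}$ and look at $K = \overline{\langle x \rangle}$, the closed monothetic (hence abelian, compact) subgroup generated by $x$. Since $t(K) \subseteq t(H)$, the element $x$ witnesses $\overline{t(K)} \neq K$, so $K$ is a compact abelian group failing \Ptd. Thus it suffices to find a copy of some $\mathbb{Z}_p$ inside $K$; since a closed subgroup of $K$ is a closed subgroup of $G$, this finishes the contrapositive. (Here I am using exactly the remark in the excerpt that a compact group is covered by its compact abelian subgroups, in the monothetic form.)

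For the abelian reduction I would pass to Pontryagin duals. Let $A = \du{K}$ be the discrete dual of the compact abelian group $K$. The condition $\overline{t(K)} = K$ dualizes to: the (closed) subgroup $t(K)^{\perp} = \{0\}$, i.e. $A$ is \emph{torsion-free}? — not quite; more precisely, $\overline{t(K)} = K$ is equivalent to $A$ having no nonzero element of finite order killed by the annihilator, which works out to: $A$ is \emph{torsion-free} iff ... I need to be careful: the correct statement is that $t(K)$ is dense in $K$ iff $A = \du K$ is torsion-free is \emph{false} in general; rather, $\overline{t(K)}$ is the annihilator of the torsion subgroup $t(A)$ of $A$, so $\overline{t(K)} = K \iff t(A) = 0 \iff A$ is torsion-free. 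Good — so $K$ fails \Ptd exactly when $A = \du K$ has a nonzero torsion-free... no: $K$ fails \Ptd iff $A$ has a nonzero element of finite order is wrong too. Let me just say: by Pontryagin duality the failure of \Ptd for $K$ translates into a concrete purely algebraic property of the discrete group $A$, and the key step is to show that this property forces $A$ to admit a quotient isomorphic to the Prüfer group $\mathbb{Z}(p^\infty)$ for some prime $p$, equivalently (dualizing back) $K$ to contain a closed copy of $\mathbb{Z}_p = \du{\mathbb{Z}(p^\infty)}$. The existence of such a quotient is a standard structure-theoretic fact about infinite abelian groups that are not torsion (every such group maps onto $\mathbb{Z}$ or onto some $\mathbb{Z}(p^\infty)$), and since $A = \du K$ with $K$ compact, $A$ cannot map onto $\mathbb{Z}$ (that would make $\mathbb{T}$ a closed — hence compact — subgroup contradiction is the wrong direction; rather $\mathbb{Z} \hookrightarrow A$ would dualize to a quotient $\du A = K \twoheadrightarrow \mathbb{T}$, which is fine, so I must instead rule out the $\mathbb{Z}$ case differently or allow it).

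The step I expect to be the main obstacle is precisely this last one: pinning down, via duality, the exact algebraic condition on $A = \du{K}$ equivalent to "$K$ fails \Ptd", and then extracting from it a surjection $A \twoheadrightarrow \mathbb{Z}(p^\infty)$ (never merely $A \twoheadrightarrow \mathbb{Z}$, which would only give a copy of $\mathbb{Z}$ — allowed by \Avoid{\mathscr{P}} but not by \Avoid{\mathscr{P}^*}, and here we want \Avoid{\mathscr{P}}, so a $\mathbb{Z}$-quotient is genuinely harmless and must be shown not to be forced). I would handle this by working with a monothetic $K$ from the outset: then $A = \du K$ is (isomorphic to) a subgroup of $\mathbb{T}_d$, in particular $A$ has rank $\le \cont$ but more importantly every finite-rank torsion-free quotient issue is controlled, and the witness $x \notin \overline{t(K)}$ says the character... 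Concretely, I would choose a character $\chi \in A$ with $\chi(x) \neq 1$ but $\chi$ trivial on $t(K)$ is impossible since $t(K)$ dense fails — rather there is $\chi \in t(K)^{\perp} \setminus \{1\}$, and $t(K)^{\perp} = t(A)$, so $\chi$ is a nonzero torsion element of $A$; then $\langle \chi \rangle \cong \mathbb{Z}(n)$ for some $n>1$, pick a prime $p \mid n$, and the subgroup of $A$ of all elements whose order is a power of $p$ contained in the divisible hull gives, after a routine Zorn's-lemma / divisible-hull argument, a quotient of $A$ isomorphic to $\mathbb{Z}(p^\infty)$; dualizing yields the desired closed $\mathbb{Z}_p \le K \le G$, completing $\text{\Avoid{\mathscr{P}}} \to \text{\Ptd}$ and hence the whole equivalence.
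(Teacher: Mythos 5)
First, a point of reference: the paper does not prove Theorem \ref{exotic:tori} at all --- it is quoted from \cite{DSt} (and from \cite{DP2} for the abelian case) --- so there is no in-paper argument to compare against, and I am judging your proposal on its own merits. Your skeleton is the right one: the implications \Ptd$\to$\Pt$\to$\Pmin$\to$\Avoid{\mathscr{P}} are trivial (and hold for arbitrary topological groups), and your reduction of \Avoid{\mathscr{P}}$\,\to\,$\Ptd\ to the compact \emph{monothetic} case --- via $K=\overline{\langle x\rangle}$ for a witness $x\in H\setminus\overline{t(H)}$, using $t(K)\subseteq t(H)$ --- is correct and disposes of non-commutativity cleanly. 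The gap is in the duality step, and it is genuine. The identity $t(K)^{\perp}=t(\widehat{K})$ is false: since $K[n]^{\perp}=n\widehat{K}$, the correct computation is $t(K)^{\perp}=\bigcap_{n\ge 1}n\widehat{K}$, the first Ulm subgroup of the dual. Concretely, take $K=\widehat{\Q}$, the solenoid dual to the discrete group $\Q$. This $K$ is compact and monothetic ($\Q$ embeds algebraically into $\T_{d}$), and $t(K)=\{0\}$, so $K$ fails \Ptd; yet $\widehat{K}=\Q$ is torsion-free, so there is no ``nonzero torsion element $\chi$'' with which to start your construction, and your argument produces no copy of $\Z_p$ --- even though $K$ does contain closed copies of $\Z_p$ for every $p$ (dual to the quotients $\Q\twoheadrightarrow\Q/\Z\twoheadrightarrow\Z(p^{\infty})$). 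A second, independent problem is your last step: a nonzero $p$-torsion element of $A=\widehat{K}$ does not by itself yield a quotient $A\twoheadrightarrow\Z(p^{\infty})$ (take $A=\Z(p)$, dual to the finite group $\Z(p)$, which satisfies all four conditions), so no ``routine Zorn's-lemma / divisible-hull argument'' can work from the mere existence of torsion.

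The correct finish uses exactly the infinite divisibility that the corrected duality supplies. If $t(K)$ is not dense in $K$, pick $0\ne a\in t(K)^{\perp}=\bigcap_{n\geq 1}nA$ and use Zorn's lemma to choose a subgroup $B\le A$ maximal with respect to $a\notin B$. Then every nonzero subgroup of $A/B$ contains $\bar a=a+B$, so $A/B$ is cocyclic, hence isomorphic to $\Z(p^{k})$ for some prime $p$ and some $1\le k\le\infty$; since $a\in nA$ for every $n$, its nonzero image $\bar a$ lies in $n(A/B)$ for every $n$, which rules out $k<\infty$. Hence $A/B\cong\Z(p^{\infty})$, and $B^{\perp}\cong\widehat{A/B}\cong\Z_p$ is a closed subgroup of $K$, contradicting \Avoid{\mathscr{P}}. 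With this replacement of your duality paragraph, the outline becomes a complete proof.
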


The compact abelian groups having the equivalent properties \Ptd, \Pt,  \Pmin\  and \Avoid{\mathscr{P}} were extensively studied in \cite{DP2}
under the name {\em exotic tori\/}.
In addition to the obvious connection to the usual tori (i.e., compact connected abelian Lie groups), exotic tori appear naturally in the study of minimal torsion abelian groups;
see the paragraph preceding Fact \ref{precompactness:theorem:for:minimal:groups} for definition.
The completion of a minimal torsion abelian group must be an exotic torus, 
and the  minimal torsion abelian groups are precisely the dense essential torsion subgroups of the exotic tori; see \cite{DPS}
for the definition of an essential subgroup. All exotic tori are finite-dimensional and have other nice properties (for more details see \cite{DP2} or \cite{DPS}).   An example of an exotic torus (i.e., a compact abelian group satisfying equivalent conditions from Theorem \ref{exotic:tori}) which is not a Lie group was exhibited in \cite{DP2}, so none of the conditions from Theorem \ref{exotic:tori} characterize Lie groups, even in the abelian case.
 
Theorem \ref{exotic:tori} cannot be extended to locally compact groups, or even arbitrary Lie groups, as $\R$ (as well as  the discrete group $\Z$) satisfies \Avoid{\mathscr{P}}, but satisfies neither \Pt\  nor \Pmin. In order to eliminate $\R$ and the discrete group $\Z$ one can impose in addition the condition \Avoid{\{\Z\}}\  which obviously follows from \Pmin.  According to a well known property of locally compact groups, a locally compact group $G$ satisfies \Avoid{\{\Z\}}\ precisely when $G$ is covered by its compact subgroups. So an appropriate locally compact version of Theorem \ref{exotic:tori} can be announced as follows: 

 \begin{theorem}\label{DSto}{\rm  \cite{DSt}} 
 For each locally compact group, conditions \Ptd, \Pt,  \Pmin\  and  \Avoid{\mathscr{P}^*}\ are equivalent.
\end{theorem}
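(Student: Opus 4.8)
The plan is to reduce the statement to the compact case handled by Theorem \ref{exotic:tori}, using the structure theory of locally compact groups. The chain \Ptd$\to$\Pt$\to$\Pmin\ is noted as obvious in the excerpt, and \Pmin$\to$\Avoid{\mathscr{P}}\ together with \Pmin$\to$\Avoid{\{\Z\}} gives \Pmin$\to$\Avoid{\mathscr{P}^*}; so the only implication requiring work is \Avoid{\mathscr{P}^*}$\to$\Ptd. Assume $G$ is locally compact and satisfies \Avoid{\mathscr{P}^*}. First I would invoke the fact recalled in the excerpt: a locally compact group satisfying \Avoid{\{\Z\}} is covered by its compact subgroups. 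Hence it suffices to prove property \Ptd\ for every compact subgroup $K$ of $G$; but a compact subgroup of $G$ again satisfies \Avoid{\mathscr{P}}\ (it contains no copy of any $\Z_p$, these being inherited by subgroups), so Theorem \ref{exotic:tori} applies to $K$ and yields that $t(L)$ is dense in $L$ for every closed subgroup $L$ of $K$. Finally, any closed subgroup $H$ of $G$ is itself locally compact and satisfies \Avoid{\mathscr{P}^*}, so it too is covered by its compact subgroups, each of which has dense torsion; since a group covered by subgroups with dense torsion has dense torsion, $t(H)$ is dense in $H$, giving \Ptd\ for $G$.

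The one point that needs care is the passage ``\Avoid{\{\Z\}} implies covered by compact subgroups'': I would recall the precise statement for locally compact groups, namely that in a locally compact group $G$ the union of all compact subgroups is a (typically non-closed) subgroup, call it $B(G)$, and $G=B(G)$ iff $G$ has no closed discrete infinite cyclic subgroup, equivalently no subgroup topologically isomorphic to $\Z$; this is classical (a consequence of the structure theorem $G\cong \R^n\times G_0$ with $G_0$ having a compact open subgroup, together with the analysis of compactly generated locally compact groups). This is where I expect the only real subtlety: one must be sure that \Avoid{\{\Z\}} — absence of a \emph{topologically isomorphic} copy of discrete $\Z$, as opposed to merely absence of a \emph{closed} copy — is the correct hypothesis, and that in a locally compact group any infinite cyclic subgroup that is not relatively compact contains a closed discrete copy of $\Z$. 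Once this is in hand, the rest is bookkeeping.

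So the skeleton is: (1) reduce to proving \Avoid{\mathscr{P}^*}$\to$\Ptd; (2) use \Avoid{\{\Z\}}\ and local compactness to write $G=\bigcup\{K : K\le G \text{ compact}\}$; (3) note \Avoid{\mathscr{P}}\ and hence Theorem \ref{exotic:tori} hold for each such $K$; (4) observe that the hypotheses pass to every closed subgroup $H\le G$, repeat (2)--(3) for $H$, and conclude $t(H)$ dense in $H$; (5) assemble the reverse implications from the already-noted obvious chain. The main obstacle, as indicated, is purely the locally compact structure input in step (2); everything else is formal.
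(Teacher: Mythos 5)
The paper itself gives no proof of this theorem: it is quoted verbatim from \cite{DSt}, so there is no in-text argument to compare yours against. Judged on its own, your reduction is sound. The only nontrivial implication is indeed \Avoid{$\mathscr{P}^*$}$\to$\Ptd, and your chain --- (a) a locally compact group with \Avoid{$\{\Z\}$} is the union of its compact subgroups, (b) each such compact subgroup inherits \Avoid{$\mathscr{P}$} and hence has dense torsion by Theorem \ref{exotic:tori}, (c) a group covered by subgroups with dense torsion has dense torsion, (d) every closed subgroup $H$ of $G$ is again locally compact with \Avoid{$\mathscr{P}^*$}, so (a)--(c) apply to $H$ --- is complete and correct. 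The one external input, step (a), is exactly the ``well known property'' the paper itself invokes in the paragraph preceding the theorem; the underlying reason is Weil's lemma that a monothetic locally compact group is either compact or topologically isomorphic to discrete $\Z$, combined with the fact that a discrete subgroup of a Hausdorff group is automatically closed, which resolves the ``topologically isomorphic copy versus closed copy'' worry you raise. The reverse implications \Ptd$\to$\Pt$\to$\Pmin$\to$\Avoid{$\mathscr{P}^*$} are handled as in the paper's own discussion (noting that a copy of $\Z_p$ is compact, hence closed, and that neither $\Z_p$ nor discrete $\Z$ has a minimal nontrivial closed subgroup). Whether this coincides with the argument of \cite{DSt} cannot be checked from the present text, but it is a valid proof.
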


The locally compact groups satisfying the equivalent conditions of Theorem \ref{DSto} were called {\em exotic Lie groups} and studied in detail \cite{DSt} and  \cite{Dtor}. It was shown, in particular, that exotic Lie groups are finite-dimensional. It is worth noticing that while all compact Lie groups are exotic Lie groups,  a non compact Lie group (e.g., $\R$) need not be an exotic Lie group. 

Let us note that \Pmin\  has an advantage over \Pt\  and \Avoid{\mathscr{C}}, since it is entirely formulated in terms of the lattice $\L(G)$ of closed subgroups of $G$. Indeed, \Pmin\  says that the poset $\L(G)$ is atomic, i.e., every element dominates a minimal element (an atom) of $\L(G)$. This lattice theoretic property was used in \cite{DP2} to deduce, via Pontryagin duality, that a compact abelian group $G$ satisfies \Pmin\  precisely when the lattice $\L(\widehat G)$ satisfies the dual property. Namely, every proper subgroup of the discrete group $\widehat G$ is contained in a maximal subgroup of $\widehat G$, or equivalently, $\widehat G$ has no divisible quotient groups. The abelian  groups with this property were described in \cite{DP2}.

\section{Three properties shared by all Lie groups}

In Theorem \ref{exotic:tori} we saw that the conditions \Ptd, \Pt, \Pmin\  and \Avoid{\mathscr{P}}\ are equivalent for compact groups and define a class of compact groups containing all compact Lie groups. On the other hand, the conditions \Ptd, \Pt,  \Pmin\  and  \Avoid{\mathscr{P}^*}\ are equivalent for locally compact groups according to Theorem \ref{DSto}, but in this case the class of locally compact groups determined by these equivalent conditions does not contain all Lie groups (e.g., it does not contain $\R$). In other words, the conditions considered in \cite{DP2,DSt} characterize classes of (locally) compact groups that {\em properly} contain the class of all compact Lie groups, but do not contain the class of all Lie groups. 

In this paper we skip the too restrictive condition \Avoid{\mathscr{P}^*} and we find an appropriate strengthening of the condition \Avoid{\mathscr{P}}\ 
that allows us to characterize Lie groups not only in the class of locally compact groups but also in some much wider classes of compact-like groups. 
A hint on how to achieve this comes from the last of the equivalent properties in  Theorem \ref{exotic:tori}, namely \Avoid{\mathscr{P}}. Indeed, one easily notices that the countable family of groups excluded by the property \Avoid{\mathscr{P}} consists exclusively of compact
zero-dimensional groups. Of course, an obvious way of strengthening the property \Avoid{\mathscr{P}} is to prohibit {\em all non-discrete closed zero-dimensional groups\/}. This leads us exactly to the property \Zgen\ in our next definition.

\begin{definition}
\label{Z:def}
For a topological group $G$, define the following conditions:
\begin{itemize}
\item[(\Zgen)] every closed zero-dimensional  subgroup of $G$ is discrete, 
\item[(\Zm)] every closed zero-dimensional metric subgroup of $G$ is discrete, 
\item[(\Zcm)] every compact metrizable zero-dimensional subgroup of $G$ is finite.
\end{itemize}
\end{definition}

All three conditions above strengthen \Avoid{\mathscr{P}}\ in an obvious way.

The property \Zgen\ has been well studied in functional analysis. 

\begin{remark}
\label{Banakh:remark}
The additive group of a Banach space $B$ has property \Zgen\ (equivalently, property \Zm) if and only if $B$ contains no subspace isomorphic to $c_0$; see \cite[Theorem 4.1]{ADG}. In particular, the additive group of the Hilbert space has property \Zgen\ (equivalently, property \Zm) \cite{DG}.
\end{remark}

The three properties from Definition \ref{Z:def} are ultimately related to Lie groups, as the following proposition shows:
\begin{proposition}
\label{arrows:between:Z}
For every topological group $G$, the following implications hold:
\begin{equation}
\label{Z:implications}
\mbox{Lie}\to \mbox{\Zgen}\to\mbox{\Zm}\to\mbox{\Zcm}.
\end{equation}
\end{proposition}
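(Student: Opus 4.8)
The plan is to separate the three arrows into two that are essentially definitional and one, $\mathrm{Lie}\to\mathscr{Z}$, that carries all the content.

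\textbf{The two easy steps.} For \Zgen$\,\to\,$\Zm\ there is nothing to do: every closed zero-dimensional \emph{metric} subgroup is in particular a closed zero-dimensional subgroup, so its discreteness is already guaranteed by \Zgen. For \Zm$\,\to\,$\Zcm, let $K$ be a compact metrizable zero-dimensional subgroup of $G$. Since $G$ is Hausdorff, $K$ is closed in $G$, hence $K$ is a closed zero-dimensional metric subgroup of $G$ and therefore discrete by \Zm; being compact as well, $K$ must be finite.

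\textbf{The main step.} To prove $\mathrm{Lie}\to\mathscr{Z}$ I would combine two classical facts about a Lie group $G$: it is locally compact, and it is an NSS group (the latter being the easy half of Glushkov's criterion quoted above). Fix an open neighborhood $U$ of $e_G$ containing no nontrivial subgroup of $G$, and let $H$ be a closed zero-dimensional subgroup of $G$. As a closed subgroup of the locally compact group $G$, the group $H$ is itself locally compact, and since $H$ is zero-dimensional it is totally disconnected; so, by van Dantzig's theorem, $H$ has a neighborhood base at $e_H$ consisting of open subgroups. Picking an open subgroup $W$ of $H$ with $W\subseteq U\cap H$, we see that $W$ is a subgroup of $G$ lying inside $U$, whence $W=\{e_G\}$. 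Thus $\{e_G\}$ is open in $H$, i.e., $H$ is discrete, which is exactly property \Zgen. (An alternative argument invokes Cartan's closed subgroup theorem: then $H$ is an embedded Lie subgroup of $G$, hence a topological manifold, and a manifold which is zero-dimensional as a topological space must be discrete.)

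\textbf{Anticipated difficulty.} There is no genuine obstacle here: the statement is in effect a repackaging of standard structure theory. The only implication with mathematical substance is $\mathrm{Lie}\to\mathscr{Z}$, and it collapses in a few lines once van Dantzig's theorem (or Cartan's closed subgroup theorem) is combined with the NSS property of Lie groups; the only point needing attention is the routine check that being a closed subgroup and being locally compact are inherited by $H$.
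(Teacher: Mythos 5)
Your proof is correct, and the two ``definitional'' arrows are handled exactly as the paper does (the paper simply declares \Zgen$\to$\Zm$\to$\Zcm\ obvious; your explicit observation that a compact subgroup of a Hausdorff group is automatically closed, hence covered by \Zm, is the right way to fill that in). Where you genuinely diverge is in the implication Lie$\to$\Zgen: the paper's argument is the parenthetical alternative you mention, namely Cartan's closed subgroup theorem --- a closed subgroup $N$ of a Lie group is itself a Lie group, and a zero-dimensional Lie group is discrete --- which disposes of the matter in two sentences. Your main argument instead combines the NSS property of Lie groups with van Dantzig's theorem: $H$ is closed in a locally compact group, hence locally compact; being zero-dimensional it is totally disconnected, so it has a base at the identity of open subgroups, and any such subgroup small enough to fit inside the NSS neighbourhood must be trivial, forcing $H$ to be discrete. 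This is sound, and it buys something: it never uses the manifold structure of $G$, only that $G$ is locally compact and NSS, so it actually proves the slightly more general statement that every locally compact NSS group satisfies \Zgen\ (which, via Glushkov's criterion quoted in Section~\ref{Exotic}, recovers the Lie case). The trade-off is that you invoke van Dantzig's theorem where the paper invokes Cartan's theorem; both are classical, and neither route has a gap.
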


\begin{proof}
Assume that $G$ is a Lie group, and let $N$ be a closed zero-dimensional subgroup of $G$.  Being a closed subgroup of the Lie group $G$, $N$ is a Lie group itself. Since a zero-dimensional Lie group is discrete, $N$ must be discrete.  Thus, \Zgen\ holds. This finishes the proof of the first implication  
$Lie$ $\to\, $\Zgen. The implications \Zgen$\ \to\ $\Zm$\ \to\ $\Zcm\   are obvious.
\end{proof}

Note that the chain of implications in \eqref{Z:implications} is similar to that in \eqref{chains:of:Es}.

One may wonder why  Definition \ref{Z:def} omits the following  natural condition:

\begin{itemize}
\item[(\Zc)] every compact zero-dimensional subgroup of $G$ is finite.
\end{itemize}

It turns out that this property is equivalent to \Zcm. Indeed, the implication \Zc$\to$\Zcm\ is trivial, and the converse implication easily follows from the following fact.

\begin{fact}
\cite{HHM}
\label{compact:metrizable:subgroup}
Every infinite compact group has an infinite compact metrizable subgroup.
\end{fact}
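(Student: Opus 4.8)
The plan is to reduce the general case to the abelian case via Pontryagin duality, and the abelian case to a purely algebraic statement about discrete abelian groups.

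The first step is the algebraic lemma: every infinite abelian group $A$ has a countably infinite quotient. If $A$ is not torsion, replace $A$ by the non-trivial torsion-free group $A/t(A)$ and extract from it a torsion-free quotient of rank one: fixing $0\neq\bar a$, use Zorn's lemma to choose a subgroup $M$ maximal among those for which $(A/t(A))/M$ is torsion-free and $\bar a\notin M$ (torsion-freeness of the quotient is inherited by unions of chains of such subgroups), and check by a pure-closure argument that this quotient has rank one, hence is a countably infinite subgroup of $\Q$. If $A$ is torsion, write $A=\bigoplus_p A_p$: if infinitely many $A_p$ are non-trivial, map each onto $\Z/p$; if some $A_p$ is infinite, then either $A_p/pA_p$ is an infinite $\Z/p$-vector space and surjects onto $\bigoplus_\omega\Z/p$, or $A_p/pA_p$ is finite, in which case $A_p$ has a finite basic subgroup and hence a non-zero divisible quotient, so $A_p$ surjects onto $\Z(p^\infty)$. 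In every case $A$ has a countably infinite quotient $Q$. Consequently, if $G$ is an infinite compact abelian group, then $\widehat G$ is an infinite discrete abelian group, so it has a countably infinite quotient $Q$, and then the dual $\widehat Q$ is a closed subgroup of $G$ that is infinite and metrizable because $Q$ is infinite and countable. This settles the abelian case.

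Now let $G$ be an arbitrary infinite compact group with identity component $G_0$. If $G_0\neq\{e\}$, then $G_0$ is a non-trivial compact connected group, so it carries a non-trivial one-parameter subgroup $\varphi\colon\R\to G_0$ (a standard consequence of the structure theory of compact groups); its closure $K=\overline{\varphi(\R)}$ is an infinite compact abelian subgroup of $G$, and the abelian case applied to $K$ finishes the proof. If $G_0=\{e\}$, then $G$ is profinite. If some $g\in G$ generates an infinite closed subgroup, then $\overline{\langle g\rangle}$ is an infinite procyclic group, which is a continuous quotient of $\widehat{\Z}=\prod_p\Z_p$ and hence metrizable, and we are done. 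Otherwise every element of $G$ has finite order, i.e., $G$ is a torsion profinite group; in that case $G$ contains an infinite abelian subgroup $A$, and we conclude by applying the abelian case to the infinite compact abelian group $\overline A$.

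The only substantial point --- and the expected obstacle --- is the assertion used in the last step: an infinite torsion profinite group contains an infinite abelian subgroup. This is not elementary: it relies on Zelmanov's theorem that a torsion profinite group is locally finite, together with the Hall--Kulatilaka theorem that every infinite locally finite group has an infinite abelian subgroup. All the other ingredients (Pontryagin duality, the elementary structure theory of discrete abelian groups, the existence of one-parameter subgroups in non-trivial compact connected groups, and the metrizability of procyclic profinite groups) are routine; in the present paper the whole argument is subsumed under the cited reference.
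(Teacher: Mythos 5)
The paper does not actually prove this statement: it is quoted as a Fact with the citation \cite{HHM}, so there is no internal argument to measure yours against, and the question is only whether your reconstruction stands on its own. It does, and it follows what is essentially the standard route. The abelian half --- dualize, show that every infinite discrete abelian group has a countably infinite quotient, and pull back the annihilator --- is classical, and your algebraic lemma is sound except for one small slip: in the branch where infinitely many primary components $A_p$ are non-trivial, a non-trivial \emph{divisible} $p$-group admits no surjection onto $\Z/p\Z$, so you should map each non-trivial $A_p$ onto $\Z/p\Z$ or onto $\Z(p^\infty)$ according as $A_p\neq pA_p$ or $A_p=pA_p$; the direct sum of these images is still countably infinite, so the repair is immediate. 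The non-abelian reduction is also correct: non-trivial one-parameter subgroups exist in every non-trivial compact connected group (indeed $\exp$ is surjective there), an infinite procyclic group is a quotient of the metrizable group $\widehat{\Z}=\prod_p\Z_p$ and hence metrizable, and you have correctly isolated the one genuinely deep point, namely that an infinite torsion profinite group contains an infinite abelian subgroup, which does require Zelmanov's local finiteness theorem together with Hall--Kulatilaka (and thus, indirectly, Feit--Thompson). Two remarks: Zelmanov's result is usually quoted in the stronger form ``every infinite compact group contains an infinite abelian subgroup,'' which would collapse your whole case analysis into a single application of the abelian case; and since this is also how the statement is obtained in the literature the paper cites, your argument is not an elementary substitute for the reference --- the hard core sits exactly where you said it does.
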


Since closed subgroups of compact groups are compact, properties  \Zgen\ and \Zc\ are equivalent for compact groups. Since \Zc\ is equivalent to \Zcm, combining this with Proposition \ref{arrows:between:Z}, we conclude that the three properties \Zgen, \Zm\ and \Zcm\ from  Definition \ref{Z:def} are equivalent for compact groups.

In the rest of this section we collect examples showing that the implications in \eqref{Z:implications} are not reversible.

\begin{example}
\label{Hilbert:space}
{\em The additive group $H$ of an infinite-dimensional Hilbert space is locally minimal, (sequentlially) complete, satisfies property \Zgen\ yet is not a Lie group.\/}
Indeed, local minimality of $H$ follows from \cite{MP}. By Remark \ref{Banakh:remark}, $H$ has property \Zgen.
\end{example}

The following proposition outlines two trivial yet useful ways of constructing groups satisfying properties \Zm\ and \Zcm.

\begin{proposition}
\label{trivial:ways}
\begin{itemize}
\item[(i)] A topological group without non-trivial convergent sequences has property \Zm.
\item[(ii)] A topological group without infinite compact metric subgroups has property \Zcm.
\end{itemize}
\end{proposition}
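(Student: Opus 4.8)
The plan is to handle the two parts independently; both are short.

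For (i), suppose $G$ has no non-trivial convergent sequences and let $H$ be a closed zero-dimensional metric subgroup of $G$. I would argue by contradiction: if $H$ is not discrete, then $\{e_H\}$ is not open in $H$, so, fixing a compatible metric $d$ on $H$, for every $n\in\N$ one can choose a point $x_n\in H$ with $0<d(x_n,e_H)<1/n$. The sequence $(x_n)$ converges to $e_H$ in $H$, and hence in $G$, since $H$ carries the subspace topology. Moreover it is non-trivial: every term differs from $e_H$ because $d(x_n,e_H)>0$, and it cannot be eventually equal to a single value $x$, for then $d(x,e_H)<1/n$ for all large $n$ would force $x=e_H$. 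This contradicts the hypothesis on $G$, so $H$ must be discrete, and (i) follows. (Zero-dimensionality plays no role in this argument; only metrizability of $H$ is used.)

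For (ii), the statement is essentially a tautology once unwound: a compact metrizable zero-dimensional subgroup of $G$ is in particular a compact metric subgroup of $G$, so if $G$ has no infinite compact metric subgroups then every such subgroup is finite, which is precisely property \Zcm.

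The only point requiring the slightest care is the verification in (i) that the constructed convergent sequence is genuinely non-trivial (not eventually constant, all terms distinct from $e_H$); the choice $0<d(x_n,e_H)<1/n$ takes care of this automatically. I do not expect any real obstacle.
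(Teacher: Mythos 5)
Your proposal is correct and follows essentially the same route as the paper, which disposes of (i) by observing that every non-discrete metric space contains a non-trivial convergent sequence (your argument just spells this out, correctly noting that for a topological group non-discreteness is detected at the identity) and declares (ii) clear. Nothing to add.
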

\begin{proof}
Item (i) follows from the fact that every non-discrete metric space contains a non-trivial convergent sequence. Item (ii) is clear.
\end{proof}

\begin{corollary}
\label{non-discrete:group:without:convergent:sequences}
If $G$ is a non-discrete zero-dimensional group without non-trivial convergent sequences, then $G$ satisfies \Zm\ but does not satisfy \Zgen.
\end{corollary}
\begin{proof}
Indeed,  $G$ satisfies \Zm \ by Proposition \ref{trivial:ways}(i). Since $G$ is non-discrete and zero-dimensional, it does not satisfy \Zgen.
\end{proof}

For an abelian group $G$, we denote by $G^\#$ the group $G$ equipped with its {\em Bohr topology\/}, i.e., the strongest precompact group topology on $G$. 
The completion $bG$ of $G^\#$ is a compact abelian group called the {\em Bohr compactification\/} of $G$. (The terms  Bohr topology and Bohr compactification have been chosen as a reward to Harald Bohr for his work \cite{B} on almost periodic functions closely related to the Bohr compactification.) It is known that $G^\#$ is zero-dimensional \cite{Sh} and sequentially complete \cite{DT1,DT2}. 

\begin{corollary}
\label{Bohr:topology}
For every infinite abelian group $G$, the group $G^\#$ satisfies \Zm\ but does not satisfy  \Zgen. 
\end{corollary}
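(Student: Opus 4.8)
The plan is to deduce Corollary~\ref{Bohr:topology} from Corollary~\ref{non-discrete:group:without:convergent:sequences} by verifying that $G^\#$ meets its three hypotheses: being non-discrete, being zero-dimensional, and having no non-trivial convergent sequences. Two of these are quoted outright in the paragraph preceding the statement: $G^\#$ is zero-dimensional by \cite{Sh}, and it is sequentially complete by \cite{DT1,DT2}. So the real content is to (a) record that $G^\#$ is non-discrete whenever $G$ is infinite, and (b) pass from sequential completeness to the absence of non-trivial convergent sequences. Once (a) and (b) are in place, Corollary~\ref{non-discrete:group:without:convergent:sequences} applies verbatim and gives both the positive assertion (\Zm) and the negative one ($\lnot$\Zgen).

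For (a): the Bohr topology on an infinite abelian group $G$ is never discrete. The quickest justification is that $G^\#$ is precompact and infinite, hence cannot be discrete (an infinite discrete group is not precompact, since its completion would be an infinite compact discrete group, which is absurd). Alternatively one invokes the classical fact that the discrete topology on an infinite group is not totally bounded. For (b): I would argue that a non-trivial convergent sequence in any topological group would have to be a Cauchy sequence in the two-sided uniformity, and its limit together with its sequential closure would produce a convergent sequence that is \emph{eventually constant} — but a convergent non-trivial sequence in a Hausdorff group is never eventually constant by definition of ``non-trivial,'' so this does not immediately yield the conclusion. The correct route is the standard one: in $G^\#$ every sequence that converges must actually be eventually constant. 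This is precisely the well-known theorem of Glicksberg-type (see \cite{DT1,DT2}, or it follows from sequential completeness of $G^\#$ together with the fact that $G^\#$ has no non-trivial convergent sequences because a convergent sequence in $G^\#$ would converge in the compact group $bG$ to a point of $G$, and Bohr-convergent sequences in $G$ are eventually constant). I would simply cite that $G^\#$ has no non-trivial convergent sequences as established in \cite{DT1,DT2} (this is the substantive input), the sequential completeness statement being the companion fact already quoted.

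Concretely, the proof I would write is: \emph{Since $G$ is infinite, $G^\#$ is a non-discrete (being infinite and precompact) zero-dimensional \cite{Sh} topological group. By \cite{DT1,DT2}, $G^\#$ has no non-trivial convergent sequences. Now apply Corollary~\ref{non-discrete:group:without:convergent:sequences}.} That is essentially a one-line deduction, so the ``obstacle'' here is not difficulty but bookkeeping: making sure the ``no non-trivial convergent sequences'' property of $G^\#$ is legitimately available. Inspecting the excerpt, the authors have pre-loaded exactly the two facts about $G^\#$ (zero-dimensionality and sequential completeness) that one needs, and sequential completeness of a group with no non-trivial convergent sequences is automatic; but the implication one actually wants goes the other way. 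So the main point to get right is that for $G^\#$ the \emph{stronger} statement — every convergent sequence is eventually constant — holds, which is the content of the cited works \cite{DT1,DT2} (where it is shown alongside sequential completeness).

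If one wished to avoid leaning on ``no non-trivial convergent sequences'' as a black box, the alternative is to prove $\lnot$\Zgen\ and \Zm\ directly: for $\lnot$\Zgen, note $G^\#$ is itself a non-discrete closed (indeed improper) zero-dimensional subgroup of $G^\#$, so \Zgen\ fails immediately with no further work; for \Zm, one must show every closed metrizable zero-dimensional subgroup $H\le G^\#$ is discrete, and here the relevant fact is that the subspace topology $H$ inherits is again a precompact group topology coarser than the Bohr topology of the underlying group of $H$, hence $H$ coincides with a subspace of $(\text{that group})^\#$, and one reduces to the statement that a metrizable precompact zero-dimensional group arising this way is discrete — which again unwinds to the eventually-constant-sequences property. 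Either way the crux is the sequential rigidity of the Bohr topology; everything else is formal.
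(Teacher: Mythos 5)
Your proof is correct and follows essentially the same route as the paper: non-discreteness of $G^\#$ from infinitude plus precompactness, zero-dimensionality from \cite{Sh}, the absence of non-trivial convergent sequences, and then an application of Corollary \ref{non-discrete:group:without:convergent:sequences}. The only bookkeeping slip is the attribution of the key fact that convergent sequences in $G^\#$ are eventually constant: this is the theorem of Flor \cite{Flor} (see also Glicksberg \cite{Glicksberg}), not a consequence of the sequential completeness results of \cite{DT1,DT2}, as your own discussion in step (b) already half-recognizes.
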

\begin{proof}
It is well known that $G^\#$ has no non-trivial convergent sequences \cite{Flor}; see also \cite{Glicksberg}. Clearly, $G^\#$ is precompact. Since it is also infinite, it cannot be discrete.
Now the conclusion follows from Corollary \ref{non-discrete:group:without:convergent:sequences}, as $G^\#$ is zero-dimensional.
\end{proof}

\begin{corollary}
\label{Bohr:proposition}
 For every infinite abelian group $G$  there exists a zero-dimensional sequentially complete precompact group  topology $\tau$ on $G$ such that $(G, \tau)$ satisfies \Zm\ but does not satisfy  \Zgen. 
\end{corollary}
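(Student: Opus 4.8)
The plan is to take $\tau$ to be the Bohr topology on $G$, so that $(G,\tau)=G^\#$, and then to verify that this single topology already has all the required properties by invoking facts assembled earlier in the paper. None of the verifications requires new work: the corollary is in essence a repackaging, as an abstract existence statement, of what has already been established about $G^\#$.

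First I would record that $(G,\tau)=G^\#$ is precompact; this is immediate from the very definition of the Bohr topology as the strongest precompact group topology on $G$ (equivalently, its completion $bG$ is the compact group called the Bohr compactification). Next, zero-dimensionality of $G^\#$ is the result cited as \cite{Sh}, and sequential completeness of $G^\#$ is the result cited as \cite{DT1,DT2}; both are quoted in the paragraph introducing the Bohr topology and so may be used directly. Finally, the statement that $(G,\tau)=G^\#$ satisfies \Zm\ but not \Zgen\ is exactly the content of Corollary \ref{Bohr:topology} (which in turn rests on the absence of non-trivial convergent sequences in $G^\#$, fed through Corollary \ref{non-discrete:group:without:convergent:sequences}).

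There is no real obstacle. The only additional ``content'' over Corollary \ref{Bohr:topology} is the reformulation as a pure existence assertion about topologies on $G$, and this comes for free once $\tau$ is exhibited explicitly. If one wishes to be fully self-contained, one may also note that $G$ infinite forces $G^\#$ to be non-discrete (a precompact group topology on an infinite group is never discrete), so that the failure of \Zgen\ is witnessed by $(G,\tau)$ itself, consistently with Corollary \ref{non-discrete:group:without:convergent:sequences}.
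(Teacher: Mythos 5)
Your proposal is correct and is exactly the paper's argument: the paper's proof consists of the single sentence that one takes $\tau$ to be the Bohr topology, so that $(G,\tau)=G^\#$, with all required properties supplied by the preceding discussion (zero-dimensionality from \cite{Sh}, sequential completeness from \cite{DT1,DT2}, precompactness by definition) together with Corollary \ref{Bohr:topology}. Nothing further is needed.
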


\begin{proof} It suffices to take $\tau$ to be the Bohr topology on $G$, so that $G^\#=(G,\tau)$. 
\end{proof}

In connection with the last three corollaries, it may be worth noticing that a counter-example to the implication \Zm$\to$\Zgen\ {\em cannot\/} be metrizable.

\begin{corollary}
\label{small:groups:are:Zcm}
Every topological group $G$ with $|G|<\cont$  satisfies \Zcm.
\end{corollary}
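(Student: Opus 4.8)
The plan is to reduce the statement to the observation that $G$ contains no infinite compact metrizable subgroup at all; once this is established, Proposition \ref{trivial:ways}(ii) yields \Zcm\ immediately. Note that for this reduction the zero-dimensionality in the definition of \Zcm\ plays no role.

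So I would argue by contradiction: suppose $K$ is an infinite compact metrizable subgroup of $G$. First I would check that $K$ has no isolated points. If $\{e_K\}$ were open in $K$, then by homogeneity (left translations are homeomorphisms) every singleton of $K$ would be open, so $K$ would be discrete; but a compact discrete space is finite, contradicting the infinitude of $K$. Hence $K$ is a non-empty perfect compact metrizable space, equivalently a non-empty perfect Polish space.

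Next I would invoke the classical fact that every non-empty perfect Polish space contains a homeomorphic copy of the Cantor set $\{0,1\}^\omega$, and therefore has cardinality at least $\cont$. (In the zero-dimensional case that is actually relevant to \Zcm\ one can instead argue directly: $K$ is then a metrizable profinite group, hence admits a decreasing sequence of clopen subgroups with trivial intersection; this sequence must be strictly decreasing infinitely often, since $K$ is infinite, so $K$ maps onto an infinite metrizable profinite group that is visibly of size at least $2^\omega$, whence $|K|\ge\cont$.) In either case $|K|\ge\cont$, so $|G|\ge|K|\ge\cont$, contradicting the hypothesis $|G|<\cont$. Therefore no such $K$ exists and $G$ satisfies \Zcm.

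There is no real obstacle here: the only nontrivial ingredient is the lower bound $|K|\ge\cont$ for an infinite compact metrizable group $K$, which is a standard Cantor-scheme argument (or may simply be quoted as the well-known fact that every infinite compact group has cardinality at least $\cont$).
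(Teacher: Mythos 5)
Your proposal is correct and follows essentially the same route as the paper: the paper simply quotes the standard fact that infinite compact metric groups have cardinality at least $\cont$, concludes that all compact metric subgroups of $G$ are finite, and applies Proposition \ref{trivial:ways}(ii). You merely supply the Cantor-scheme justification for that cardinality bound, which the paper leaves as a known fact.
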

\begin{proof} Recall that infinite compact (metric) groups have size $\geq \cont$. From this and the assumption of our corollary, we conclude that 
that all compact metric subgroups of $G$ are finite. Now $G$ satisfies \Zcm\ by Proposition \ref{trivial:ways}(ii). 
\end{proof}

\begin{corollary}
\label{groups:of:size:smaller:than:c}
Let $G$ be a non-discrete metric group such that $|G|<\cont$. Then $G$ satisfies \Zcm\ but does not have property \Zm.
\end{corollary}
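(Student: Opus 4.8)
The plan is to split the assertion into its two halves and dispatch each one using results already available in the excerpt. For the positive half (that $G$ satisfies \Zcm), I would simply invoke the preceding Corollary \ref{small:groups:are:Zcm}: since $|G| < \cont$, that corollary applies verbatim and gives \Zcm\ with no further work. So the only real content lies in the negative half, namely that $G$ fails \Zm.

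For the failure of \Zm, recall that \Zm\ asks every closed zero-dimensional metric subgroup of $G$ to be discrete. The natural candidate for a witness is $G$ itself: $G$ is metric and $G$ is a closed subgroup of $G$, and by hypothesis $G$ is non-discrete, so it suffices to check that $G$ is zero-dimensional. This is where the cardinality assumption does its work. A metrizable group that is not zero-dimensional would, via its connected component or more simply via containing a nontrivial continuum-like piece, be forced to have size at least $\cont$; more cleanly, I would argue that a metric space of size $< \cont$ is hereditarily disconnected, hence (being metric, so in particular having small inductive dimension behaving well) zero-dimensional. Concretely: in a connected metric space with more than one point one can separate two points by a continuous $[0,1]$-valued function whose range is all of $[0,1]$ (the usual argument producing a path-like or at least order-dense image), so a connected metric space with $\ge 2$ points has cardinality $\ge \cont$; applying this to the quasicomponents of $G$ shows every quasicomponent is a singleton, and for metric (indeed separable-in-the-relevant-sense or simply metric) spaces trivial quasicomponents give a zero-dimensional space. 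Thus $G$ is a non-discrete closed zero-dimensional metric subgroup of itself, so \Zm\ fails.

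The main obstacle, and the only step requiring care, is the implication ``metric and $|G| < \cont$'' $\Rightarrow$ ``zero-dimensional.'' One must make sure that ``hereditarily disconnected'' upgrades to ``zero-dimensional'' in this setting; for general spaces these notions differ, but for metrizable spaces the standard covering-dimension/separation-dimension coincidences (or the fact that a metric space whose quasicomponents are all singletons has a clopen base) close the gap. An alternative, perhaps cleaner, route that sidesteps dimension theory entirely: a topological group that is not zero-dimensional has a nontrivial connected component $C$ of the identity; $C$ is a closed subgroup, and a nontrivial connected metric group has cardinality $\ge \cont$ (again because a nontrivial connected completely regular space maps onto $[0,1]$), contradicting $|G| < \cont$. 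Either way the argument is short. Once zero-dimensionality of $G$ is in hand, the rest is immediate from the definitions of \Zm\ and \Zcm\ together with Corollary \ref{small:groups:are:Zcm}.
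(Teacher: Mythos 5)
Your decomposition is exactly the paper's: \Zcm\ comes from Corollary \ref{small:groups:are:Zcm}, and the failure of \Zm\ is witnessed by $G$ itself once one knows that $G$ is zero-dimensional. The gap is in how you propose to deduce zero-dimensionality from $|G|<\cont$: both of your routes pass through a false implication. A metrizable space --- indeed a metrizable topological group --- all of whose quasicomponents are singletons need not be zero-dimensional. Erd\H{o}s space, the subgroup of $\ell^2$ consisting of the points with all coordinates rational, is a separable metrizable topological group that is totally disconnected (trivial quasicomponents, trivial connected component of the identity) yet has dimension one. So neither ``trivial quasicomponents give a clopen base'' nor ``a topological group that is not zero-dimensional has a nontrivial connected component of the identity'' is valid, even in the metrizable setting; the coincidence theorems for $\mathrm{ind}$, $\mathrm{Ind}$ and $\dim$ in metric spaces do not bridge hereditary disconnectedness and zero-dimensionality. (Erd\H{o}s space has cardinality $\cont$, so it does not contradict the corollary itself --- but it does refute the intermediate steps you lean on.)

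The cardinality hypothesis has to be used directly rather than routed through connectedness. The short correct argument is: $G$ is Tychonoff, so given $x\in G$ and an open set $U\ni x$ there is a continuous $f:G\to[0,1]$ with $f(x)=0$ and $f(G\setminus U)\subseteq\{1\}$. Since $|f(G)|\le |G|<\cont$, some $t\in(0,1)$ lies outside $f(G)$, and then $f^{-1}([0,t))=f^{-1}([0,t])$ is a clopen neighbourhood of $x$ contained in $U$. Hence $G$ has a base of clopen sets; metrizability is not even needed for this step. With that substitution the rest of your plan is exactly the paper's proof.
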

\begin{proof}
By last corollary, $G$ has property \Zcm. It easily follows from $|G|<\cont$ that $G$ is zero-dimensional. Since $G$ is metric and non-discrete, it fails property \Zm.
\end{proof}

\begin{remark}
\label{TVS:remark}
(i) {\em If continuous homomorphisms from a topological group $G$ to $\R$ separate points of $G$, then $G$ contains no non-trivial compact subgroups; in particular, $G$ satisfies \Zcm\/} by Proposition \ref{trivial:ways}(ii). Indeed, assume that $K$ is a non-trivial compact subgroup of $G$. By our assumption, there exists a continuous homomorphism $f:G\to\R$ such that $f(K)$ is non-trivial. Since $f(K)$ is compact, this contradicts the fact that $\R$ has no non-trivial compact subgroups.

(ii) It follows from (i) that {\em the additive group $G$ of every topological vector space over $\R$ has no non-trivial compact subgroups; in particular, $G$ satisfies \Zcm\/}. 
\end{remark}

\begin{example}\label{example:Z}
(i)
{\em The complete metric group $\R^\N$  satisfies \Zcm\ but does not satisfy \Zm\/}. Indeed, since $\Z^\N$ is an infinite non-discrete closed zero-dimensional subgroup of $\R^\N$, the group $\R^\N$ fails property \Zm. The rest follows from Remark \ref{TVS:remark}(ii).

(ii)
{\em The additive group $G$ of the Banach space $c_0$ is a locally minimal (sequentially) complete metric abelian group satisfying \Zcm\ but failing \Zm\/}.
Indeed, $G$ satisfies \Zcm\ by Remark \ref{TVS:remark}(ii). On the other hand,  $G$ does not satisfy \Zm\ by Remark \ref{Banakh:remark}.
\end{example}

It is worth noticing that the group $\R^\N$ from item (i) of this example is not locally minimal; see \cite[Example 7.44]{DM}.
We now exhibit a minimal group distinguishing between properties \Zcm\ and \Zm.

\begin{example}
\label{list:cannot:be:extended}
Let $G$ be any countably infinite minimal metric abelian group, for example, $\Q/\Z$; see \cite{Do,S}. Then {\em $G$ is a (precompact) minimal abelian group satisfying \Zcm\ but failing \Zm\/}. Indeed, $G$ is precompact by Fact \ref{precompactness:theorem:for:minimal:groups}. Since $G$ is infinite, it is non-discrete.
The rest follows from Corollary \ref{groups:of:size:smaller:than:c}.
\end{example}

Our next  proposition shows that the previous two examples cannot be somehow ``combined together'' to obtain a (locally) precompact (sequentially) complete group distinguishing between properties \Zcm\ and \Zm.

\begin{proposition}
\label{locally:precompact:sequentially:complete}
Conditions \Zm\ and \Zcm\ are equivalent for locally precompact, sequentially complete groups. In particular, these two conditions 
coincide for locally countably compact groups.  
\end{proposition}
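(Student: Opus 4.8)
The plan is as follows. Since \Zm$\,\to\,$\Zcm\ holds for every topological group by Proposition~\ref{arrows:between:Z}, I only need the converse: if $G$ is locally precompact, sequentially complete and satisfies \Zcm, then $G$ satisfies \Zm. So I fix a closed zero-dimensional metric subgroup $H$ of $G$ and aim to prove that $H$ is discrete.

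The key reduction I would carry out first is to show that $H$ is actually locally compact. Local precompactness is inherited by arbitrary subgroups: if $U$ is a precompact neighbourhood of $e_G$, then $U\cap H$ is a neighbourhood of $e_H$ which is precompact, because total boundedness is inherited by subspaces and the two-sided uniformity of $H$ is precisely the one induced from that of $G$. Sequential completeness is inherited by closed subgroups: a Cauchy sequence in $H$ is Cauchy in $G$, hence converges in $G$, and the limit lies in $H=\overline H$. Since $H$ is metrizable, and for metrizable groups sequential completeness coincides with completeness (as recalled in the introductory material), $H$ is complete and therefore coincides with its two-sided completion, which is locally compact by local precompactness. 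Thus $H$ is a locally compact, metrizable, zero-dimensional group.

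At this point I would invoke van Dantzig's theorem: a locally compact zero-dimensional group has a base at the identity consisting of compact open subgroups. Choosing such a compact open subgroup $V\le H$, I note that $V$ is a compact metrizable (being a subspace of the metrizable $H$) zero-dimensional subgroup of $G$, so $V$ is finite by \Zcm. A finite open subgroup forces $\{e_H\}$ to be open in $H$, i.e.\ $H$ is discrete. This proves \Zcm$\,\to\,$\Zm, hence the equivalence. For the final assertion, I would simply observe that by Diagram~1 every locally countably compact group is locally precompact (it is locally pseudocompact, hence locally precompact) and sequentially complete, so \Zm\ and \Zcm\ coincide for such groups.

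The only place the hypotheses do real work — and thus the heart of the matter — is the passage ``sequentially complete $+$ metrizable $\Rightarrow$ complete'', which upgrades local precompactness of $H$ to local compactness and so makes both van Dantzig's theorem and the finiteness hypothesis \Zcm\ applicable to the clopen subgroup $V$; everything else is routine permanence behaviour under passage to (closed) subgroups. I do not anticipate an obstacle beyond being careful that these permanence properties are stated for the two-sided uniformity. Dropping sequential completeness genuinely breaks the implication, as the examples earlier in this section show.
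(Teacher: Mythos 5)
Your argument is correct and follows essentially the same route as the paper's proof: both pass to the closed zero-dimensional metric subgroup, use inheritance of local precompactness and sequential completeness, upgrade to completeness (hence local compactness) via metrizability, and then apply van Dantzig's theorem to produce a compact open subgroup to which \Zcm\ is applied. The only cosmetic difference is that the paper phrases the implication contrapositively (a non-discrete such subgroup yields an infinite compact open subgroup, violating \Zcm), while you argue directly that the compact open subgroup is finite and hence the subgroup is discrete.
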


\begin{proof}
Let $G$ be a locally precompact, sequentially complete group. The implication \Zm$\to$\Zcm\ is established in Proposition \ref{arrows:between:Z}. To prove the reverse implication suppose that $G$ does not satisfy \Zm. Then $G$ has  a non-discrete closed zero-dimensional metric subgroup $N$. Since $N$ is a closed subgroup of $G$, it is locally precompact and sequentially complete. Since $N$ is also metrizable, $N$ is complete. Being also locally precompact, $N$ is locally compact.
Being  a non-discrete locally compact zero-dimensional group, $N$ contains an infinite open compact subgroup $C$, by van Dantzig's theorem \cite{vDan}. This shows that $G$ does not satisfy \Zcm. 
\end{proof}

Example \ref{example:Z} shows that local precompactness in  Proposition \ref{locally:precompact:sequentially:complete} is essential, while  Example \ref{list:cannot:be:extended} shows that sequential completeness is essential  as well.

Examples in this section clearly show that none of the implications  in \eqref{Z:implications} can be reversed, even under some ``mild'' compactness-like conditions. Additional examples in this direction having much stronger compactness-like properties can be also found in Section \ref{compact-like:examples}.
In this paper we study in detail the question of how one can strengthen these 
compactness-like conditions in order to make the implications  in \eqref{Z:implications} reversible. 

\section{Main results}

Recall that a topological group $G$ is called {\em almost  connected} \ if $G$ has a non-empty open connected subset \cite{DS_memo}. Clearly, a topological group $G$ is almost connected precisely when its connected component $c(G)$ is open.
Obviously, each connected group is almost connected.
All Lie groups are locally connected and locally connected groups are almost connected, so almost connectedness is a necessary condition for being a Lie group.

Lie groups are also locally compact. Combining these facts with (a part of) Diagram 1, we obtain a simplified diagram containing the most essential properties 
that appear in our main results. 

\medskip
\begin{center}
${\xymatrix@!0@C5.0cm@R=1.2cm{
\mbox{Lie} \ar@{->}[r] \ar@{->}[d] & \mbox{locally connected} \ar@{->}[r]& \mbox{almost connected} \\
\mbox{locally compact} \ar@{->}[r] \ar@{->}[d] & \mbox{locally minimal} & \\
\mbox{locally $\omega$-bounded}\ar@{->}[d] \ar@{->}[r] & \mbox{sequentially complete} & \\
\mbox{locally precompact} & & \\
   }}$
\end{center}
\medskip
\begin{center}
Diagram 2.
\end{center}
\medskip

The Lie property is the ``maximal element'' of this diagram in a sense that it implies all other properties listed in it. The ``minimal elements'' of Diagram 2 (that is, the weakest properties that have no arrows starting at them) are {\em  almost connected, locally minimal,  sequentially complete\/} and  {\em locally precompact}.  The first property is clearly a weak connectedness-like property, while the other three are mild compactness-like properties. One might hope that these four general properties combined together and amended by conditions from Definition \ref{Z:def} restricting closed zero-dimensional subgroups of a given topological group would result in some sort of a characterization of Lie groups. In other words, one could hope that the Lie property  could be ``assembled'' from these properties taken as  ``building blocks''.  For example, it is quite tempting to suggest that Lie groups are precisely  the almost connected, locally minimal, locally precompact, 
sequentially complete groups having one of the three properties from Definition \ref{Z:def}. Unfortunately, this conjecture turns out to be false in a strong way, as the following couple of examples demonstrates.

\begin{example}
\label{two:examples}
(i) 
There exists a countably compact (thus, precompact and sequentially complete), connected, 
locally connected (so also almost connected) abelian group which has property \Zgen\ but is not Lie.

(ii) 
There exists a minimal, locally countably compact (so locally precompact and sequentially complete),  locally connected (thus, almost connected)
nilpotent group of class 2  which has property \Zgen\ but is not Lie.
\end{example}

These two examples  are built using quite sophisticated set-theoretic and group-theoretic machinery, with the help of the Continuum Hypothesis; the interested reader is referred to Examples 
\ref{Tkachenko:example} and \ref{loc_conn+min+loc_cc+Z-Lie}, respectively.
In Section \ref{compact-like:examples} one can also find additional examples (without additional set-theoretic assumptions beyond ZFC) with somewhat weaker compactness properties.

Inspired by these examples outlining the limits of what can be proved, we shall attempt to slightly strengthen the four building block conditions so that to obtain characterizations of (compact) Lie groups, sometimes restricting ourselves to the class of abelian groups. 

For example, by replacing ``locally precompact and sequentially complete'' with the stronger property of local  $\omega$-boundedness, one can obtain a characterization of Lie groups even without the assumption of local minimality or almost connectedness. Indeed, our first resut shows that the three properties from Definition \ref{Z:def} are equivalent for locally $\omega$-bounded groups.

\begin{theorem}
\label{thm:AB}
For a locally $\omega$-bounded group $G$ the following conditions are equivalent: 
\begin{itemize}
\item[(i)] $G$ is a Lie group; 
\item[(ii)] $G$ satisfies \Zgen; 
\item[(iii)] $G$ satisfies \Zm;
\item[(iv)] $G$ satisfies \Zcm.
\end{itemize}
\end{theorem}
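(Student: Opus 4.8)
The plan is to prove the cycle (i)$\to$(ii)$\to$(iii)$\to$(iv)$\to$(i). The implications (i)$\to$(ii)$\to$(iii)$\to$(iv) are already contained in Proposition \ref{arrows:between:Z} and hold for arbitrary topological groups, so the entire content of the theorem is the implication (iv)$\to$(i): a locally $\omega$-bounded group satisfying \Zcm\ is a Lie group. I would prove this contrapositively, showing that a locally $\omega$-bounded group $G$ which is \emph{not} a Lie group must contain an infinite compact metrizable zero-dimensional subgroup.

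First I would reduce to the locally compact case. Since $G$ is locally $\omega$-bounded, fix an open neighborhood $U$ of $e_G$ with $\overline U$ $\omega$-bounded; in particular $\overline U$ is countably compact. If $G$ failed to be locally compact I want to extract a non-trivial compact metrizable zero-dimensional subgroup directly: inside the $\omega$-bounded set $\overline U$ one can run a Cantor-scheme / dyadic-tree construction, choosing a sequence of elements $x_n$ together with symmetric neighborhoods so that the closure of the countable subgroup they generate sits inside $\overline U$, hence is compact by $\omega$-boundedness, and is arranged to be topologically isomorphic to a subgroup of a product of finite cyclic (or cyclic $p$-adic) groups — so compact, metrizable and zero-dimensional. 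The cleanest route, though, is: a locally $\omega$-bounded group has a closed (hence $\omega$-bounded, hence countably compact) neighborhood of the identity that is moreover Weil-complete on that neighborhood, so $G$ is locally complete; if $G$ is in addition not locally compact then, by a standard argument (a complete group that is not locally compact is not NSS in a strong sense), a suitable neighborhood contains a non-trivial subgroup whose closure is $\omega$-bounded and has a quotient or subgroup that is an infinite profinite group — again producing the desired infinite compact metrizable zero-dimensional subgroup via Fact \ref{compact:metrizable:subgroup}. So WLOG $G$ is locally compact.

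Now assume $G$ is locally compact and not a Lie group. By Glushkov's criterion (quoted in Section \ref{Exotic}), a locally compact group is Lie if and only if it is NSS, so $G$ is not NSS: every neighborhood of $e_G$ contains a non-trivial subgroup. Equivalently, by van Dantzig's theorem applied inside a compact open neighborhood of the identity of a suitable quotient — more precisely, by the Gleason–Yamabe structure theory, $G$ has an open subgroup $G_0$ with a compact normal subgroup $K$ such that $G_0/K$ is a Lie group; since $G$ is not Lie and $G_0$ is open, $K$ must be infinite (else $G_0$, hence $G$, would be Lie). Thus $G$ contains an infinite compact subgroup $K$, and by Fact \ref{compact:metrizable:subgroup} $K$ contains an infinite compact metrizable subgroup $K'$. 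Finally, an infinite compact metrizable group contains an infinite compact metrizable \emph{zero-dimensional} subgroup: if $K'$ is zero-dimensional we are done, and otherwise $K'$ has positive dimension, its identity component is a non-trivial compact connected metrizable group, and a standard fact (every infinite compact group, in particular every non-trivial compact connected abelian Lie quotient such as $\T$, or more directly the structure of compact metrizable groups) yields an infinite closed zero-dimensional metrizable subgroup — for instance a copy of $\prod_n \Z(p)$ or of $\Z_p$ sitting inside. Either way $G$ has an infinite compact metrizable zero-dimensional subgroup, contradicting \Zcm.

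The main obstacle is the non-locally-compact reduction in the second paragraph: one must genuinely exploit $\omega$-boundedness (as opposed to mere sequential completeness or precompactness, which by Example \ref{example:Z} and Example \ref{list:cannot:be:extended} do \emph{not} suffice) to manufacture an infinite compact zero-dimensional subgroup out of "small subgroups" when local compactness fails. I expect this to require a careful inductive construction of a convergent net of group words inside the $\omega$-bounded neighborhood, controlling at each stage both that partial products stay in $\overline U$ and that the limiting closed subgroup is infinite and profinite; once local compactness is in hand, the rest is Glushkov + Gleason–Yamabe + Fact \ref{compact:metrizable:subgroup} and is essentially formal.
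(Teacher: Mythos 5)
Your reduction of the theorem to the single implication (iv)$\to$(i) is right, and the broad plan (non-locally-compact case, then locally compact case via structure theory) points in the right direction, but both halves of your argument have genuine gaps. In the locally compact case, the step ``an infinite compact metrizable group of positive dimension contains an infinite closed zero-dimensional metrizable subgroup'' is false: $\T$ and $SO(3)$ are infinite compact metrizable groups all of whose closed zero-dimensional subgroups are finite. Fact \ref{compact:metrizable:subgroup} only hands you \emph{some} infinite compact metrizable subgroup $K'$, which may well be a Lie group. The statement you actually need is that every infinite compact \emph{non-Lie} group contains an infinite compact zero-dimensional subgroup, and this is the technical heart of the matter: the paper proves it (as Lemma \ref{compact:Zc:are:Lie}) via Pontryagin duality in the abelian case (Lemma \ref{exotic:tori:lemma}) and, in the connected non-abelian case, via the new structure result Theorem \ref{compact:connected:group:with:Lie:center}(d), which produces an infinite product of simple Lie factors and hence an infinite product of finite cyclic groups inside $G$; Lee's theorem then handles the disconnected case. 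None of this is ``essentially formal.''

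The reduction to local compactness is the second gap, and the more serious one. Your ``standard argument (a complete group that is not locally compact is not NSS in a strong sense)'' does not exist: complete non-locally-compact groups can be NSS (Banach spaces, Hilbert space), and even within locally $\omega$-bounded groups nothing you wrote explains where the small subgroups, let alone an infinite profinite subgroup, would come from. You also skip the globally $\omega$-bounded case entirely, which is not covered by local compactness ($\omega$-bounded groups need not be compact). The paper's route is quite different: first it proves the $\omega$-bounded case by showing that the closure of every countable subgroup is a compact Lie group, that the dimensions of these separable compact subgroups are uniformly bounded, and that a connected one of maximal dimension has finite index (Lemma \ref{omega-bounded:lemma}); then, for a locally $\omega$-bounded $G$, it takes the subgroup $H$ generated by $\overline{U}$, finds a closed $G_\delta$-subgroup $N\subseteq U$ which is $\omega$-bounded and hence a metrizable Lie group, concludes that $\{1\}$ is $G_\delta$ in the $\aleph_0$-bounded group $H$, applies Arhangel'skii's theorem to get a weaker separable metric topology, and uses pseudocompactness of $\overline{U}$ to conclude that $\overline{U}$ is metrizable and hence compact. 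You would need to supply an argument of comparable substance for this step; as written, the proposal asserts the conclusion rather than proving it.
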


The proof of this theorem is postponed until Section \ref{O-bou}.

Example \ref{two:examples}(i) shows that the implication (ii)$\to$(i) in this theorem need not hold if one replaces ``locally $\omega$-bounded'' by ``countably compact'', even in the abelian case. We refer the reader to Example \ref{cc:example:distinguishing:Zs}(ii) for an example of a countably compact abelian group satisfying \Zm\ without property \Zgen; therefore,  the implication (iii)$\to$(ii) need not hold if one replaces ``locally $\omega$-bounded'' by ``countably compact'', even in the abelian case. Nevertheless, items (iii) and (iv) remain equivalent even under this weaker assumption; see Proposition  \ref{locally:precompact:sequentially:complete}.

Since Lie groups are locally compact and locally compact groups are locally $\omega$-bounded, we get the following characterization of Lie groups in terms of their closed zero-dimensional compact metric subgroups.

\begin{corollary}
\label{characterizing:by:local:omega-boundedness}
A topological group is a Lie group if and only if it is locally $\omega$-bounded and has no infinite compact metric zero-dimensional subgroups.
\end{corollary}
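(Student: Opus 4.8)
The plan is to deduce Corollary~\ref{characterizing:by:local:omega-boundedness} directly from Theorem~\ref{thm:AB} together with the standard facts about Lie groups recorded in the excerpt. First I would establish the forward implication: if $G$ is a Lie group, then $G$ is locally compact (Lie groups are locally compact), hence locally $\omega$-bounded, since every point of a locally compact group has a compact---and thus $\omega$-bounded---neighbourhood. Moreover, by Proposition~\ref{arrows:between:Z} the Lie property implies \Zcm, which is exactly the statement that $G$ has no infinite compact metric zero-dimensional subgroups. So the two conditions on the right-hand side are necessary.

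For the converse, suppose $G$ is locally $\omega$-bounded and has no infinite compact metric zero-dimensional subgroup; the latter is precisely condition \Zcm\ of Definition~\ref{Z:def}. Then $G$ satisfies hypothesis (iv) of Theorem~\ref{thm:AB} under the standing assumption of local $\omega$-boundedness, so by the equivalence (iv)$\Leftrightarrow$(i) in that theorem, $G$ is a Lie group. This completes the argument.

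Essentially the only point requiring a word of care is the equivalence between the phrase ``has no infinite compact metric zero-dimensional subgroups'' and condition \Zcm, which asserts that every compact metrizable zero-dimensional subgroup of $G$ is finite; but these are literally the same statement once one notes that a finite group is exactly a compact metrizable zero-dimensional group that is not infinite, and that ``metric'' and ``metrizable'' are interchangeable in this context. The main (and only) substantive obstacle has already been dispatched in Theorem~\ref{thm:AB}, whose proof is deferred to Section~\ref{O-bou}; here there is nothing more to do than to observe that Lie groups sit inside the class of locally $\omega$-bounded groups, which follows from the inclusions ``locally compact $\to$ locally $\omega$-bounded'' visible in Diagram~1.
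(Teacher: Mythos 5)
Your proposal is correct and follows exactly the paper's intended derivation: the forward direction is the chain ``Lie $\to$ locally compact $\to$ locally $\omega$-bounded'' together with Proposition~\ref{arrows:between:Z}, and the converse is the implication (iv)$\to$(i) of Theorem~\ref{thm:AB}. Nothing further is needed.
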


Example \ref{two:examples}(i) shows that one cannot replace ``locally $\omega$-bounded'' by ``countably compact'' in this corollary, even in the abelian case.

Even the locally compact version of Corollary \ref{characterizing:by:local:omega-boundedness} seems to be new.

\begin{corollary}
\label{loc:compact:corollary}
A topological group is a Lie group if and only if it is locally compact and has no infinite compact metric zero-dimensional subgroups.
\end{corollary}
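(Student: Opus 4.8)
The plan is to deduce this corollary directly from Corollary \ref{characterizing:by:local:omega-boundedness} (equivalently, from the equivalence (i)$\Leftrightarrow$(iv) in Theorem \ref{thm:AB}), using the single observation that local compactness is a special case of local $\omega$-boundedness. The only genuinely ``new'' ingredient is the elementary topological fact that every compact space is $\omega$-bounded: if $K$ is compact and $C\subseteq K$ is countable, then $\overline{C}$ is a closed subset of $K$, hence compact. Consequently, if $G$ is locally compact, witnessed by an open neighbourhood $U$ of $e_G$ with $\overline{U}$ compact, then $\overline{U}$ is $\omega$-bounded, so $G$ is locally $\omega$-bounded. This is precisely the implication ``locally compact $\to$ locally $\omega$-bounded'' appearing in Diagram~1.

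With this in hand the two directions are immediate. For the forward implication, assume $G$ is a Lie group. Then $G$ is locally compact (a Lie group is, in particular, a locally compact topological group), and by Proposition~\ref{arrows:between:Z} it satisfies \Zcm, i.e.\ all its compact metric zero-dimensional subgroups are finite; in particular $G$ has no \emph{infinite} such subgroups. For the reverse implication, assume $G$ is locally compact and has no infinite compact metric zero-dimensional subgroup, that is, $G$ satisfies \Zcm. By the observation of the previous paragraph, $G$ is locally $\omega$-bounded, so Corollary~\ref{characterizing:by:local:omega-boundedness} applies and yields that $G$ is a Lie group.

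I do not expect any real obstacle: the whole weight of the argument is carried by Theorem~\ref{thm:AB}/Corollary~\ref{characterizing:by:local:omega-boundedness}, and what remains is the trivial verification that ``compact $\Rightarrow$ $\omega$-bounded''. If one instead wanted a proof of this locally compact case that does not pass through the local $\omega$-bounded machinery, the work would shift to re-establishing the implication \Zcm${}\to{}$Lie for locally compact groups directly; that route would presumably combine van Dantzig's theorem (producing an infinite compact open zero-dimensional subgroup inside any non-discrete locally compact zero-dimensional group, as in the proof of Proposition~\ref{locally:precompact:sequentially:complete}) with Glushkov's NSS criterion for locally compact groups recalled in Section~\ref{Exotic}, to show that a locally compact non-Lie group must contain an infinite compact metric zero-dimensional subgroup. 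Since Corollary~\ref{characterizing:by:local:omega-boundedness} is available, however, this longer path is unnecessary.
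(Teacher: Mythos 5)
Your proof is correct and is essentially the paper's own argument: the paper presents this corollary as the ``locally compact version'' of Corollary~\ref{characterizing:by:local:omega-boundedness}, relying on exactly the implication ``locally compact $\to$ locally $\omega$-bounded'' from Diagram~1 together with Proposition~\ref{arrows:between:Z} for the forward direction. (The alternative direct route you sketch is also essentially present in the paper as Proposition~\ref{locally:compact:are:Lie}, proved via Davis's decomposition theorem rather than van Dantzig plus Glushkov, but it is not needed here.)
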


Our second theorem shows that,  in the class of abelian groups, the conjunction of local minimality and local precompactness suffices for characterizing Lie groups 
in the spirit of Theorem \ref{thm:AB}. Adding sequential completeness to the mix produces even more similarity with Theorem \ref{thm:AB}.

\begin{theorem}
\label{thm:NEW}
\label{thm:C}
For a locally minimal, locally precompact abelian group $G$  the following conditions are equivalent:   
\begin{itemize}
\item[(i)] $G$ is a Lie group; 
\item[(ii)] $G$ satisfies \Zgen; 
\item[(iii)] $G$ satisfies \Zm.
\end{itemize}
Moreover, if $G$ is additionally assumed to be sequentially complete, then the following condition can be added to this list:
\begin{itemize}
\item[(iv)] $G$ satisfies \Zcm.
\end{itemize}
\end{theorem}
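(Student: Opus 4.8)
\textbf{Proof proposal for Theorem \ref{thm:C}.}

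The plan is to reduce everything to the locally compact case, where Corollary \ref{loc:compact:corollary} (or equivalently the classical Glushkov criterion for NSS) applies, and then use the structure theory of locally minimal, locally precompact abelian groups. Since the implications (i)$\to$(ii)$\to$(iii) are already contained in Proposition \ref{arrows:between:Z}, and (iii)$\to$(iv) is likewise in Proposition \ref{arrows:between:Z}, the real content is to prove (iii)$\to$(i) for the first part, and, under the additional hypothesis of sequential completeness, (iv)$\to$(iii) so that (iv) joins the list. The latter is immediate: a locally precompact, sequentially complete group satisfies \Zm\ iff it satisfies \Zcm\ by Proposition \ref{locally:precompact:sequentially:complete}, so (iv)$\to$(iii) is free once $G$ is assumed sequentially complete. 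Thus the whole theorem comes down to showing that a locally minimal, locally precompact abelian group with property \Zm\ is a Lie group.

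First I would pass to the completion. Let $\widetilde G$ be the two-sided (Ra\u\i kov) completion of $G$; since $G$ is locally precompact, $\widetilde G$ is locally compact, and since $G$ is abelian, so is $\widetilde G$. The key point is that local minimality is inherited well here: by the Morris--Pestov theory (and the results in \cite{ACDD,MP} on local minimality of dense subgroups), a dense locally minimal subgroup of a topological group forces strong constraints; in particular one expects $G$ to be \emph{essential} in $\widetilde G$ in an appropriate local sense, or at least that $\widetilde G$ inherits a property that lets us control its zero-dimensional subgroups from those of $G$. Concretely, I would aim to show that $\widetilde G$ also has property \Zm: given a closed zero-dimensional metric subgroup $K$ of $\widetilde G$, one uses van Dantzig's theorem to find a compact open subgroup of $K$ (if $K$ is non-discrete), intersects it with $G$, and uses density plus local minimality/essentiality to conclude that $G\cap K$ is a non-discrete closed zero-dimensional metric subgroup of $G$, contradicting \Zm\ for $G$. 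This is the step I expect to be the main obstacle: transferring the zero-dimensional subgroup information from $G$ to $\widetilde G$ requires knowing that $G$ meets the relevant compact subgroups of $\widetilde G$ in something non-discrete, and that is exactly where the interplay of local minimality (via essentiality-type arguments) and abelianness is needed. The Prodanov--Stoyanov circle of ideas (Fact \ref{precompactness:theorem:for:minimal:groups} and its local analogues) together with the structure of locally minimal abelian groups from \cite{ACDD,ACDD2,DM} should supply the needed essentiality.

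Once $\widetilde G$ is known to be a locally compact abelian group with property \Zm\ (equivalently \Zcm, by Proposition \ref{locally:precompact:sequentially:complete}, since $\widetilde G$ is complete and locally precompact), Corollary \ref{loc:compact:corollary} gives that $\widetilde G$ is a Lie group. It remains to descend from $\widetilde G$ to $G$: a dense subgroup of a Lie group that is locally minimal must be the whole group. Indeed, a Lie group has a neighbourhood base at the identity consisting of sets with no small subgroups, and a dense locally minimal subgroup of a locally compact (hence complete-in-the-relevant-sense) group must be open, hence closed, hence all of $\widetilde G$; alternatively, $G$ dense in the Lie group $\widetilde G$ together with local minimality of $G$ forces $G=\widetilde G$ because the subspace topology on $G$ already ``sees'' the manifold structure and local minimality prevents $G$ from being a proper dense subgroup carrying a coarser topology. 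Hence $G=\widetilde G$ is a Lie group, establishing (iii)$\to$(i). Combining with the trivial implications and with Proposition \ref{locally:precompact:sequentially:complete} for the sequentially complete addendum completes the proof. \QED
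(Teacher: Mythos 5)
Your reduction and the first half of the argument track the paper's proof closely: the trivial implications come from Proposition \ref{arrows:between:Z}, the sequentially complete addendum from Proposition \ref{locally:precompact:sequentially:complete}, and the completion $K$ of $G$ is shown to be a Lie group exactly as you sketch --- local essentiality (Fact \ref{minimality-criterion}) plus the base of clopen subgroups of a hypothetical infinite compact metric zero-dimensional subgroup of $K$ produces a non-discrete closed zero-dimensional metric subgroup of $G$, contradicting \Zm. That part is fine.

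The genuine gap is in your final descent from $K=\widetilde G$ to $G$. Your claim that ``a dense subgroup of a Lie group that is locally minimal must be the whole group'' is false. By Fact \ref{minimality-criterion}, a dense subgroup of $K$ is locally minimal iff $K$ is locally minimal and the subgroup is locally essential in $K$; when $K$ is a Lie group (hence NSS), local essentiality is vacuous for a small enough neighbourhood, so \emph{every} dense subgroup of a Lie group is locally minimal. Thus $\Q$ is a dense, proper, locally minimal subgroup of $\R$, and $\Q/\Z$ (Example \ref{list:cannot:be:extended}) is a dense, proper, even \emph{minimal} and precompact subgroup of its compact completion. Local minimality therefore gives no leverage whatsoever at this stage, and the subspace topology on a proper dense subgroup is not ``coarser'' in any sense local minimality could detect. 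The paper closes this gap with a second, independent use of property \Zm\ of $G$ itself: Corollary \ref{subgroups:of:Lie:having:Zm} states that a subgroup of an abelian Lie group satisfying \Zm\ is closed, and this rests on the substantial Theorem \ref{R^n} (a dense proper subgroup of $\R^m\times\T^n$ whose traces on the coordinate hyperplanes are non-dense is zero-dimensional) together with an induction (Corollary \ref{corollary:R^n}). Note that $\Q\subseteq\R$ also shows you cannot replace \Zm\ by \Zcm\ here, which is why the theorem's (iv) needs sequential completeness. Your proof is missing this entire ingredient, and without it the implication (iii)$\to$(i) does not follow.
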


The proof of this theorem is postponed until Section \ref{Min}.

It follows from Corollary \ref{Bohr:proposition} that local minimality of $G$ cannot be omitted in Theorem \ref{thm:C}.  Example \ref{list:cannot:be:extended} shows that one cannot drop the additional assumption of sequential completeness in the final part of Theorem \ref{thm:NEW}. As was mentioned in the beginning of this section, commutativity of the group cannot be dropped in Theorem \ref{thm:NEW}.

Since Lie groups are locally compact (so, locally minimal) and satisfy property \Zm, Theorem \ref{thm:NEW}  gives the following characterization of abelian Lie groups.

\begin{corollary}\label{new:corollary1}
An abelian topological group group $G$ is a Lie group if and only if $G$ is  locally minimal, locally precompact and all closed metric zero-dimensional subgroups of $G$ are discrete.
\end{corollary}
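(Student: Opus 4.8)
The plan is to derive Corollary \ref{new:corollary1} as an essentially immediate consequence of Theorem \ref{thm:NEW}. First I would prove the forward implication: if $G$ is an abelian Lie group, then $G$ is locally compact (by definition of a Lie group), and since every locally compact group is locally minimal (see Diagram 1 or 2), $G$ is locally minimal; it is also precompact in the local sense because locally compact implies locally precompact; and by Proposition \ref{arrows:between:Z} (the implication Lie $\to$ \Zgen $\to$ \Zm), every closed zero-dimensional subgroup of $G$ — in particular every closed metric zero-dimensional subgroup — is discrete. So the forward direction just unwinds the definitions and cites the diagram together with Proposition \ref{arrows:between:Z}.

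For the converse, suppose $G$ is abelian, locally minimal, locally precompact, and satisfies the condition that every closed metric zero-dimensional subgroup is discrete — that is, $G$ satisfies property \Zm. Then $G$ satisfies the hypotheses of Theorem \ref{thm:NEW} (a locally minimal, locally precompact abelian group), and condition (iii) of that theorem holds, so by the equivalence (iii)$\Leftrightarrow$(i) of Theorem \ref{thm:NEW} we conclude that $G$ is a Lie group. This closes the equivalence.

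There is really no substantive obstacle here, since the corollary is a verbatim restatement of the (i)$\Leftrightarrow$(iii) portion of Theorem \ref{thm:NEW} packaged with the trivial observation that a Lie group is automatically locally minimal and locally precompact (because it is locally compact). The only point requiring a sentence of care is making explicit that ``all closed metric zero-dimensional subgroups of $G$ are discrete'' is literally the statement of property \Zm, and that this property for a Lie group follows from Proposition \ref{arrows:between:Z} rather than needing any new argument; once that identification is made, the proof is a two-line citation of Theorem \ref{thm:NEW}. Thus the main ``work'' — all of it hidden in Theorem \ref{thm:NEW}, whose proof is deferred to Section \ref{Min} — has already been done, and this corollary merely records the consequence in a self-contained, quotable form.

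\begin{proof}
If $G$ is an abelian Lie group, then $G$ is locally compact, hence locally minimal and locally precompact (see Diagram 1), and by Proposition \ref{arrows:between:Z} it satisfies \Zgen, and so in particular \Zm; that is, all closed metric zero-dimensional subgroups of $G$ are discrete. Conversely, if $G$ is an abelian group that is locally minimal and locally precompact, and all closed metric zero-dimensional subgroups of $G$ are discrete, then $G$ satisfies condition (iii) of Theorem \ref{thm:NEW}, so by the equivalence (i)$\Leftrightarrow$(iii) of that theorem, $G$ is a Lie group.
\end{proof}
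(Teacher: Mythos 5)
Your proposal is correct and matches the paper's own justification exactly: the paper derives this corollary in one sentence from Theorem \ref{thm:NEW}, noting that Lie groups are locally compact (hence locally minimal and locally precompact) and satisfy \Zm, with the converse being the implication (iii)$\to$(i) of that theorem. Your identification of the subgroup condition with property \Zm\ and your citation of Proposition \ref{arrows:between:Z} for the forward direction are precisely what is needed.
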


Example \ref{Hilbert:space} shows that local precompactness of $G$ cannot be omitted from  the assumptions of both Theorem \ref{thm:NEW} and Corollary \ref{new:corollary1}.

\begin{corollary}
\label{thm:NEW*}
For a minimal abelian group $G$  the following conditions are equivalent:   
\begin{itemize}
\item[(i)] $G$ is a compact Lie group; 
\item[(ii)] $G$ satisfies \Zgen; 
\item[(iii)] $G$ satisfies \Zm.
\end{itemize}
\end{corollary}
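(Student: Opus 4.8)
The plan is to derive Corollary \ref{thm:NEW*} directly from Theorem \ref{thm:NEW} together with Fact \ref{precompactness:theorem:for:minimal:groups} and Corollary \ref{new:corollary1} (or Theorem \ref{thm:NEW} itself). First I would observe that each of the conditions (i), (ii), (iii) trivially implies the next backwards: a compact Lie group satisfies \Zgen\ by Proposition \ref{arrows:between:Z} (in fact compactness gives \Zgen\ $\Leftrightarrow$ \Zm\ $\Leftrightarrow$ \Zcm, as noted after Fact \ref{compact:metrizable:subgroup}), and \Zgen\ $\to$ \Zm\ is the obvious implication from \eqref{Z:implications}. So the whole content is the implication (iii)$\to$(i): a minimal abelian group $G$ satisfying \Zm\ is a compact Lie group.

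The key reduction is that a minimal abelian group is automatically locally minimal (by the top arrow of Diagram 1) and precompact (by Fact \ref{precompactness:theorem:for:minimal:groups}), hence in particular locally precompact. Therefore $G$ satisfies all the hypotheses of Theorem \ref{thm:NEW}, and condition (iii) there (property \Zm) gives that $G$ is a Lie group. It remains only to upgrade ``Lie'' to ``compact Lie''. For this I would invoke precompactness once more: a Lie group is metrizable (even locally compact and second countable on each component), so its two-sided uniformity completion $\widetilde G$ is its completion as a metric group; since $G$ is precompact, $\widetilde G$ is compact. But a metrizable Lie group is complete (being locally compact), so $G=\widetilde G$ is compact. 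Thus $G$ is a compact Lie group, establishing (iii)$\to$(i) and closing the cycle of equivalences.

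The only point requiring a little care — and the step I expect to be the main (minor) obstacle — is the passage from ``precompact Lie group'' to ``compact Lie group'': one must make sure that the abstract two-sided uniformity completion used in the definition of precompactness coincides with the metric completion for a Lie group. This is standard: a Lie group is locally compact, hence (Raĭkov-)complete, so it equals its two-sided completion; a complete precompact group is compact. Alternatively, one can argue that a precompact Lie group is finite-dimensional with finitely many components and each component is a precompact connected Lie group, hence a compact torus, so $G$ is compact. Either way the argument is routine, and the substantive mathematics is entirely carried by Theorem \ref{thm:NEW}. I would write the proof in three or four lines, citing Theorem \ref{thm:NEW}, Fact \ref{precompactness:theorem:for:minimal:groups}, and the elementary fact that a precompact complete group is compact.

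\begin{proof}
The implications (i)$\to$(ii)$\to$(iii) are clear: every compact Lie group satisfies \Zgen\ by Proposition \ref{arrows:between:Z}, and \Zgen$\to$\Zm\ is the obvious implication from \eqref{Z:implications}.

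To prove (iii)$\to$(i), assume that the minimal abelian group $G$ satisfies \Zm. By Fact \ref{precompactness:theorem:for:minimal:groups}, $G$ is precompact, in particular locally precompact. Since every minimal group is locally minimal, $G$ satisfies the hypotheses of Theorem \ref{thm:NEW}, and condition (iii) of that theorem yields that $G$ is a Lie group. A Lie group is locally compact, hence Raĭkov complete, so $G$ coincides with its two-sided uniformity completion. As $G$ is precompact, this completion is compact; therefore $G$ is a compact Lie group.
\end{proof}
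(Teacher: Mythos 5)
Your proof is correct and follows exactly the paper's route: the paper likewise deduces the corollary from Theorem \ref{thm:NEW} via Fact \ref{precompactness:theorem:for:minimal:groups} and the observation that precompact Lie groups are compact. Your extra care in justifying that last observation (a Lie group is locally compact, hence complete, hence equals its compact completion) is a welcome but inessential elaboration of what the paper states in one clause.
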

\begin{proof}
Since minimal abelian groups are precompact by Fact \ref{precompactness:theorem:for:minimal:groups}, the conclusion of our corollary follows from Theorem \ref{thm:NEW} and the fact that precompact Lie groups are compact.
\end{proof}

Example \ref{list:cannot:be:extended} shows that one cannot add \Zcm\ to the list of equivalent conditions in Corollary \ref{thm:NEW*}.

Since compact groups are minimal and precompact discrete groups are finite,  Corollary \ref{thm:NEW*} yields a concise
characterization of compact abelian Lie groups:

\begin{corollary}
\label{min:implies:compact:Lie}
An abelian topological group is a compact Lie group if and only if it is minimal and has no infinite closed metric zero-dimensional subgroups.
\end{corollary}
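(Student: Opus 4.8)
The plan is to deduce this corollary immediately from Corollary~\ref{thm:NEW*} together with Fact~\ref{precompactness:theorem:for:minimal:groups}; the only point requiring care is to match the hypothesis ``has no infinite closed metric zero-dimensional subgroups'' with the property \Zm\ that figures in Corollary~\ref{thm:NEW*}.

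For the ``only if'' direction, assume $G$ is a compact abelian Lie group. Then $G$ is compact, hence minimal. By Proposition~\ref{arrows:between:Z} the group $G$ satisfies \Zgen, so in particular every closed zero-dimensional metric subgroup of $G$ is discrete; since $G$ is compact, such a subgroup is a closed, hence compact, discrete subspace of $G$ and is therefore finite. Thus $G$ has no infinite closed metric zero-dimensional subgroups. For the ``if'' direction, assume $G$ is a minimal abelian group with no infinite closed metric zero-dimensional subgroups. Then every closed zero-dimensional metric subgroup of $G$ is finite, hence discrete, so $G$ satisfies \Zm. By the equivalence of (i) and (iii) in Corollary~\ref{thm:NEW*}, $G$ is a compact Lie group.

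The step that genuinely uses the hypotheses, and hence the one to be careful with, is the passage from \Zm\ back to ``no infinite closed metric zero-dimensional subgroups'': discreteness of a subgroup does not by itself force finiteness (witness $\Z\leq\R$), but here $G$ is minimal, hence precompact by Fact~\ref{precompactness:theorem:for:minimal:groups}, and precompactness passes to subgroups, so any discrete subgroup of $G$ is a discrete precompact group, whose two-sided completion is a compact discrete --- hence finite --- group. This is exactly the observation ``precompact discrete groups are finite'' mentioned just before the statement, and it is the only subtlety; everything else is a direct citation of the already-established equivalences.
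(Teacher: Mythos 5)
Your proof is correct and follows essentially the same route as the paper: both directions reduce to Corollary~\ref{thm:NEW*}, using that compact groups are minimal and that discrete closed subgroups of a compact (or precompact) group are finite, which is exactly the observation ``precompact discrete groups are finite'' the paper invokes. No gaps.
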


Example \ref{Hilbert:space} shows that minimality cannot be replaced with local minimality in Corollaries \ref{thm:NEW*} and \ref{min:implies:compact:Lie}.

Since the Lie groups are locally connected, it is natural to restrict the study of Lie groups within the class of locally connected groups. Actually one can work even under the 
weaker condition of almost connectedness. For almost connected groups, our third theorem extends Theorem \ref{thm:NEW} far beyond the abelian case.

\begin{theorem}
\label{last:theorem}
For  an almost  connected, locally  minimal, precompact sequentially complete group $G$, the following conditions are equivalent:
\begin{itemize}
\item[(i)] $G$ is a compact Lie group; 
\item[(ii)] $G$ satisfies \Zgen; 
\item[(iii)] $G$ satisfies \Zm;
\item[(iv)] $G$ satisfies \Zcm.
\end{itemize}  
\end{theorem}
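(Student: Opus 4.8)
Since the implications (i)$\to$(ii)$\to$(iii)$\to$(iv) are supplied by Proposition~\ref{arrows:between:Z}, the real work is the single implication (iv)$\to$(i): an almost connected, locally minimal, precompact, sequentially complete group $G$ satisfying \Zcm{} must be a compact Lie group. The first step is to upgrade the ``compact-like'' hypotheses to genuine compactness. A precompact group has a compact completion $\widetilde G$; the plan is to show that under sequential completeness and almost connectedness, $G$ is already complete, hence $G=\widetilde G$ is compact. Here ``almost connected'' is crucial: the connected component $c(G)$ is open, and on a connected precompact group sequential completeness tends to force completeness — one reduces to showing the (open, hence clopen) subgroup $c(G)$ is complete, and a connected precompact sequentially complete group is compact by a Prodanov–Stoyanov-type argument (connected precompact abelian groups are divisible, and completeness of the relevant quotients is detected by sequences). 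Since $c(G)$ is open in $\widetilde G$ as well and $\widetilde G/c(G)$-worth of cosets — actually $\widetilde G$ compact and $c(G)$ clopen of finite index is not automatic, so one argues that $G/c(G)$ is discrete, and local minimality plus precompactness pins down the finitely-many-cosets situation after passing to $c(G)$. The cleanest route: prove $c(G)$ is a compact connected group, then handle $G/c(G)$.

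**Reducing to the compact connected case.** Once $G$ is shown to be compact (equivalently, once we have replaced $G$ by its completion and checked the completion still satisfies \Zcm{} — which it does, since \Zcm{} for the compact group $\widetilde G$ is equivalent to \Zgen{} and to \Zc{} by Fact~\ref{compact:metrizable:subgroup}, and any infinite compact zero-dimensional subgroup of $\widetilde G$ would contain an infinite compact metrizable zero-dimensional subgroup, also by Fact~\ref{compact:metrizable:subgroup}, contradicting \Zcm{} for $G$ if it met $G$ densely — but it need not, so this transfer step must be done carefully using that $G$ is dense in $\widetilde G$), the remaining task is: a compact, almost connected group satisfying \Zcm{} is a Lie group. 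By the structure theory of compact groups (or by Glushkov's criterion via van Dantzig), it suffices to show $G$ is an NSS group. Suppose not; then every neighborhood of $e_G$ contains a non-trivial closed (hence compact) subgroup, and by a standard argument one extracts an infinite compact zero-dimensional subgroup — e.g. a non-trivial compact subgroup $K$ that is either zero-dimensional or, using van Dantzig inside the compact group, contains an infinite profinite subgroup; Fact~\ref{compact:metrizable:subgroup} then yields an infinite compact metrizable zero-dimensional subgroup, contradicting \Zcm{}. Hence $G$ is NSS, and a compact NSS group is a compact Lie group.

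**Dispatching the non-abelian and connectedness subtleties.** The place where almost connectedness really earns its keep — and where I expect the main obstacle — is in going from ``compact NSS'' and the given hypotheses back to ``Lie'': a compact group with \Zcm{} need not a priori be NSS unless one rules out small profinite subgroups, and ruling those out is exactly \Zc{}$\Leftrightarrow$\Zcm{}. But the subtler point is whether the completion $\widetilde G$ inherits \Zcm{}; a compact zero-dimensional subgroup of $\widetilde G$ need not lie in the dense subgroup $G$. The resolution I would pursue: use that \Zgen{} and \Zcm{} coincide for compact groups, and instead of transferring \Zcm{} directly, show first that $c(G)$ (open in $G$, and its closure $c(\widetilde G)$ is open in $\widetilde G$) is already closed and compact using the abelian Prodanov–Stoyanov machinery on $c(G)/\overline{[c(G),c(G)]}$ combined with sequential completeness — this is where the nilpotent/solvable-free generality is handled, via the fact that a connected compact-like group's completion is controlled by its abelianization and its (compact semisimple) commutator structure. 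Then $\widetilde G = G$ because the open subgroup $c(G)$ is complete and $G/c(G)$, being precompact, locally minimal and having $c(G)$ open, is discrete hence finite — so $G$ is compact, and the NSS argument above finishes the proof. I would structure the write-up as: (1) almost connected + precompact + sequentially complete + locally minimal $\Rightarrow$ compact; (2) compact + \Zcm{} $\Rightarrow$ \Zgen{} $\Rightarrow$ NSS $\Rightarrow$ Lie; with step (1), particularly the passage through $c(G)$ in the non-abelian case, being the technical heart.
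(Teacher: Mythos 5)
Your plan founders at step (1). You propose to prove, \emph{before} the condition \Zcm{} enters the picture, that an almost connected, precompact, sequentially complete (locally minimal) group is already compact, via a divisibility/Prodanov--Stoyanov argument on $c(G)$. This cannot work: sequential completeness of a precompact group only makes it sequentially closed in its compact completion, and that yields closedness only when the completion is \emph{metrizable}. Example~\ref{Tkachenko:example} of the paper (a connected, locally connected, countably compact --- hence precompact and sequentially complete --- proper dense subgroup of $\T^{\omega_1}$) shows that ``connected $+$ precompact $+$ sequentially complete'' does not imply compact, so the assertion that ``completeness of the relevant quotients is detected by sequences'' is false in this generality. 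The correct logical order is the reverse of yours: one must first prove that the completion $K$ of $G$ is a compact \emph{Lie} group (using only almost connectedness, local minimality, precompactness and \Zm), and only then invoke sequential completeness --- now $K$ is metrizable, so sequential closedness of $G$ in $K$ gives $G=K$.

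The step you flag as ``to be done carefully'' --- transferring the restriction on zero-dimensional subgroups from $G$ to its completion --- is precisely the technical heart, and your proposal leaves it unresolved. The paper's mechanism is local essentiality (Fact~\ref{minimality-criterion}): one first shows $Z(K)$ is a Lie group (this needs the abelian Theorem~\ref{thm:C} applied to $Z(G)$, plus a splitting $Z(K)=\T^d\times B$ and an NSS argument for $B$ using local essentiality); then, if $K$ were not Lie, the structure Theorem~\ref{almost:connected} (whose proof requires the dynamical Lemma~\ref{dynamical} and the characteristic decomposition $c(K)'\cong M\times R$) produces an infinite product of non-trivial compact Lie \emph{normal} subgroups of $K$, local essentiality forces $G$ to meet infinitely many factors non-trivially, and torsion elements of these intersections assemble into a non-discrete closed metric zero-dimensional subgroup of $G$ itself (Lemma~\ref{getting:Zm:violating:subgroups:in:G}), contradicting \Zm. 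None of this appears in your outline. Separately, your step (2) (compact $+$ \Zcm{} $\Rightarrow$ Lie) is a true statement --- it is Lemma~\ref{compact:Zc:are:Lie} --- but the ``standard argument'' you sketch for extracting an infinite compact zero-dimensional subgroup from failure of NSS is not one; the paper needs Lemma~\ref{exotic:tori:lemma}, Theorem~\ref{compact:connected:group:with:Lie:center}(d) and Lee's theorem to establish it.
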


The proof of this theorem is postponed until Section \ref{Min}.

Example \ref{Tkachenko:example} below shows that local minimality cannot be omitted in Corollary  \ref{new:corollary1} and Theorem \ref{last:theorem}.
On the other hand, Example \ref{MMDD} shows that a locally countably compact (so locally precompact and sequentially complete) 
locally connected minimal nilpotent group which satisfies \Zgen \ need not be a Lie group. This shows that ``precompact'' cannot be replaced 
by ``locally precompact'' in Theorem \ref{last:theorem}. 

\begin{corollary}\label{corollary:last:theorem} 
A topological group is a compact Lie group if and only if it is almost connected, sequentially complete, precompact, locally  minimal and all its compact metric zero-dimensional subgroups are finite. 
 \end{corollary}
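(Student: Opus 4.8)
The plan is to derive the corollary directly from Theorem~\ref{last:theorem}, treating it essentially as a reformulation of that result; the only points requiring care are matching the phrase ``all its compact metric zero-dimensional subgroups are finite'' with property \Zcm\ and checking the elementary implications among the compactness-like and connectedness-like properties recorded in Diagrams 1 and 2 together with Proposition~\ref{arrows:between:Z}.

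For the ``only if'' direction, suppose $G$ is a compact Lie group. Being compact, $G$ is precompact and complete, hence sequentially complete (see Diagram~1). Being a Lie group, $G$ is locally compact, hence locally minimal (Diagram~2), and it is locally connected, hence almost connected (Diagram~2). Finally, by Proposition~\ref{arrows:between:Z} the Lie property implies \Zgen, which implies \Zm, which implies \Zcm; and \Zcm\ says precisely that every compact metrizable zero-dimensional subgroup of $G$ is finite. Thus $G$ enjoys all five listed properties.

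For the ``if'' direction, suppose $G$ is almost connected, sequentially complete, precompact, locally minimal and satisfies \Zcm\ (i.e.\ all its compact metric zero-dimensional subgroups are finite). Then $G$ meets the standing hypotheses of Theorem~\ref{last:theorem} --- almost connected, locally minimal, precompact, sequentially complete --- and condition~(iv) of that theorem holds for $G$ by assumption. By the equivalence (iv)$\,\Leftrightarrow\,$(i) in Theorem~\ref{last:theorem}, $G$ is a compact Lie group.

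Since we are permitted to invoke Theorem~\ref{last:theorem}, there is no genuine obstacle here: the whole mathematical substance is contained in that theorem, and the corollary is pure bookkeeping. Were one to prove the statement from scratch, the hard part would of course be the implication \Zcm$\,\to\,$Lie under the four ``building block'' hypotheses, which is exactly what Theorem~\ref{last:theorem} supplies.
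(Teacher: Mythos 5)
Your proof is correct and matches the paper's (implicit) derivation exactly: the corollary is just the equivalence (i)$\leftrightarrow$(iv) of Theorem~\ref{last:theorem}, with the observation that ``all compact metric zero-dimensional subgroups are finite'' is property \Zcm\ and that a compact Lie group has the four building-block properties. Nothing further is needed.
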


Example \ref{Hilbert:space} shows that precompactness of $G$ is necessary in Theorem \ref{last:theorem} and cannot be dropped from its Corollary 
\ref{corollary:last:theorem}.

The  particular version of our results deserves special attention.

\begin{corollary}
\label{cc:minimal:is:Lie}
Let $G$ be a countably compact minimal group satisfying \Zcm. If $G$ is either abelian or almost connected, then $G$ is a compact Lie group.
\end{corollary}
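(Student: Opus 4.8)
The plan is to split into the two cases according to the hypothesis and reduce each to a theorem already available. First observe that a countably compact group is sequentially complete (this is an arrow in Diagram 1) and precompact (for $\mathfrak P = $ countably compact, $\mathfrak P$ is equivalent to ``precompact and locally $\mathfrak P$'', as remarked after Diagram 1); of course countable compactness implies local countable compactness, hence local precompactness. Since $G$ is minimal it is in particular locally minimal. Thus in both cases $G$ is already known to be minimal (hence precompact and locally minimal), sequentially complete, and to satisfy \Zcm.

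\emph{Abelian case.} Here $G$ is minimal abelian, hence precompact by Fact \ref{precompactness:theorem:for:minimal:groups}; being also locally minimal, locally precompact and sequentially complete, Theorem \ref{thm:NEW} applies and condition (iv) (\Zcm) may be added to the list of equivalent conditions. Since $G$ satisfies \Zcm, it follows that $G$ is a Lie group. A precompact Lie group is compact, so $G$ is a compact Lie group. (Alternatively one can invoke Corollary \ref{thm:NEW*} together with Proposition \ref{locally:precompact:sequentially:complete}, which shows \Zcm\ and \Zm\ coincide for locally precompact sequentially complete groups, to get \Zm\ from \Zcm.)

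\emph{Almost connected case.} Now $G$ is almost connected, minimal (hence locally minimal), and — as noted above — precompact and sequentially complete. These are precisely the standing hypotheses of Theorem \ref{last:theorem}, in whose list of equivalent conditions \Zcm\ appears as (iv). Since $G$ satisfies \Zcm, Theorem \ref{last:theorem} gives that $G$ is a compact Lie group, completing the proof.

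The only genuine content is therefore the verification that ``countably compact minimal'' unpacks into the exact hypothesis packages of Theorems \ref{thm:NEW} and \ref{last:theorem}; the main (and essentially the sole) point requiring care is that a countably compact group really is both precompact and sequentially complete, which is recorded in Diagram 1 and the remark following it. No obstacle beyond this bookkeeping is expected, since the two deep theorems are assumed.
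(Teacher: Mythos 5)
Your proposal is correct and follows essentially the same route as the paper: note that countable compactness gives precompactness and sequential completeness, then apply Theorem \ref{thm:NEW} in the abelian case and Theorem \ref{last:theorem} in the almost connected case. The extra bookkeeping you supply (minimal $\Rightarrow$ locally minimal, precompact Lie $\Rightarrow$ compact) is exactly what the paper leaves implicit.
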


\begin{proof} Indeed, $G$ is precompact and sequentially complete. Now the conclusion follows from Theorem \ref{thm:NEW} (in the abelian case) and 
Theorem \ref{last:theorem} (in the almost connected case).
\end{proof}

\begin{remark}
Since locally $\omega$-bounded groups are locally precompact and sequentially complete, the equivalence of items (iii) and (iv) in Theorem \ref{thm:AB} follows from Proposition \ref{locally:precompact:sequentially:complete}.
Similarly, this proposition implies also the additional statement in Theorem \ref{thm:NEW}, as well as the equivalence of items (iii) and (iv) in Theorem \ref{last:theorem}.
\end{remark}

One can consider the weaker versions of the three conditions \Zgen, \Zm\ and \Zcm\ from  Definition \ref{Z:def} obtained by replacing the word ``subgroup'' with the word ``normal subgroup''. The following example shows that (with the trivial exception of the purely ``abelian'' results) most of our results spectacularly fail for these weaker versions of the three properties.

\begin{example} 
Let $L=SO_3(\R)$ be a compact connected simple Lie group. Then {\em $G=L^\N$ is a compact connected metric group 
without non-trivial closed zero-dimensional normal subgroups, yet $G$ is not a Lie group\/}. Indeed, by a well-known theorem of Hofmann \cite{HM}, a closed zero-dimensional normal subgroup of a connected compact group must be central, and the conclusion follows from the fact that $G$ has the trivial center. 
\end{example}

It is worth mentioning here the TAP property from \cite{Sp} defined by requiring that no sequence in a topological group is multiplier convergent;
see \cite{DSS}. This property is weaker than NSS \cite{Sp}, and therefore, is possessed by every Lie group. Since TAP groups satisfy \Zcm, the results in this section can be applied to obtain characterizations of (compact) Lie groups in terms of multiplier convergence of sequences; see \cite{DSS1}.

We conclude this section with the diagram summarizing the main results exposed above (i.e., those from the present article, contained in the 
implications on the {\em first row}s of the diagram, and those from \cite{DP2,DSt}, contained in the  implications in the {\em lower part} of the diagram).

\bigskip
\begin{center}\ \ \ \ \ \ \ \ \ \ \ \ \ \ 
${\xymatrix@!0@C2.5cm@R=2.3cm{
  \mbox{Lie}   \ar@{=>}[r]|{(2)} 
  \ar@{->}[d]|{(5)} &  \mbox{\Zgen}  \ar@{=>}[r]|{(3)}&
  \mbox{\Zm}
  \ar@{=>}[r]|{(4)} &  \mbox{\Zcm}\ar@/_1.5pc/[lll]|{(1)} \ar@{->}[d]|{(6)} 
 \ar@{=>}[dr] & \\
\mbox{ \Ptd } \ar@{=>}[r]&   \mbox{ \Pt }  \ar@{=>}[r] &
\mbox{\Pmin}   
   \ar@{=>}[r]  & \mbox{\Avoid{\mathscr{P}^*}}
     \ar@/^1.5pc/[lll]|{(7)}   \ar@{=>}[r] & \mbox{\Avoid{\mathscr{P}}} \ar@/^1.2pc/[l]|{(8)} & \\
   }}$
\end{center}

\bigskip
\begin{center}
Diagram 3.
\end{center}
\bigskip

The double arrows denote implications that 
always hold.
The single arrows denote implications valid only in special classes of topological groups.

The implication (1) holds for the three classes of groups described in the assumptions of Theorems \ref{thm:AB}, \ref{thm:NEW} and \ref{last:theorem}, and the implications (2), (3) and (4) become equivalences for the same classes of groups.  However, these two implications are not equivalences in general, as  Examples \ref{example:Z}, \ref{list:cannot:be:extended}, \ref{Tkachenko:example} and \ref{cc:example:distinguishing:Zs} show. 

According to Theorem \ref{DSto}, the implication (7) holds  for locally compact groups (i.e., for locally compact groups  \Ptd, \Pt, \Pmin\ and \Avoid{\mathscr{P}^*}\ are equivalent).
The implications (5), (6)  and (8) hold  for compact groups, as in this case \Avoid{\{\Z\}} is vacuous, so \Ptd, \Pt, \Pmin,  \Avoid{\mathscr{P}}\ 
and \Avoid{\mathscr{P}^*}\ are equivalent in this case.

\section{Non-closed subgroups of abelian Lie groups do not have property \Zm}

The results in this section are used only in Section \ref{Min}, so the reader may skip it at first reading.

The next theorem is of independent interest extending way beyond the scope of our paper. 

\begin{theorem}
\label{R^n}
Let $k\ge 2$ be an integer. For every $i=1,\dots,k$ let $K_i$ be either $\R$ or $\T$. Let $G$ be a dense subgroup of $K=\prod_{i=1}^k K_i$ such that $G\cap H_i$ is not dense in $H_i$ for every $i=1,\dots,k$, where $H_i=\{(x_1,\dots,x_k)\in K: x_i=0\}$. Then $G$ is zero-dimensional.
\end{theorem}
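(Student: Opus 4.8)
The plan is to show that $G$ has small inductive dimension zero by producing, around an arbitrary point, arbitrarily small clopen neighborhoods in $G$. By translating (which is a homeomorphism of the topological group $G$) it suffices to find arbitrarily small clopen neighborhoods of $0$. The hypothesis gives us, for each $i$, an open set $W_i$ in $H_i$ with $0\in W_i$ and $\overline{G\cap H_i}\cap W_i$ a proper closed subset of... more precisely: $G\cap H_i$ is not dense in $H_i$, so there is a nonempty open $V_i\subseteq H_i$ disjoint from $G\cap H_i$; after shrinking we may take $V_i$ to be a basic product box inside $H_i$, and since $H_i\cong \prod_{j\ne i}K_j$ is a product of copies of $\R$ and $\T$, each such box is a product of intervals and arcs. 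The key point I want to extract is that for each $i$ there is a compact neighborhood $C_i$ of $0$ in $K_i$ whose boundary circle/sphere can be ``pushed'' so as to avoid $G$ in the $i$-th slice.

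The heart of the argument is the following one-dimensional observation, which I would isolate as a lemma: if $K_i=\R$ or $\T$ and $D$ is a subgroup of $K_i$ that is \emph{not} dense, then $D$ is either trivial or a discrete cyclic subgroup; hence its complement is open and, more importantly, for all but countably many $t$ the point $t$ lies outside $D$, and one can choose the endpoints of a defining interval/arc to avoid $D$. Combined with the density of $G$ in the full product $K$, I would argue as follows. Consider the projection-style decomposition: for each $i$, the subgroup $G\cap H_i$ fails to be dense in $H_i$; I claim this forces the closure $\overline{G}\cap H_i = \overline{G\cap H_i}$ to be a proper closed subgroup of $H_i$ — wait, $\overline G = K$ since $G$ is dense, so that closure is all of $H_i$; the non-density is genuinely a statement about $G\cap H_i$ itself, not its closure in $K$. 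So instead the mechanism must be: pick for each coordinate $i$ a ``radius'' $r_i$ so that the topological boundary $S_i = \partial(\text{ball of radius } r_i \text{ in } K_i)$ — a two-point set $\{-r_i,r_i\}$ if $K_i=\R$, or a two-point set of antipodal-ish arcs endpoints if $K_i=\T$ — has the property that the corresponding ``cylinder'' $\{x\in K: x_i\in S_i\}$ meets $G$ only in... Here is where I expect the real work: I want to choose the box $B=\prod_i B_i$ (each $B_i$ an interval/arc around $0$ in $K_i$ of radius $<\eps$) so that $\partial_K B \cap G=\emptyset$. Then $B\cap G$ is clopen in $G$: it is open since $B$ is open, and closed since its closure in $G$ is contained in $\overline B\cap G=(B\cup\partial_K B)\cap G=B\cap G$. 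Since $\eps$ is arbitrary and $B$ shrinks to $0$, this gives a clopen base at $0$, hence zero-dimensionality.

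So everything reduces to: \emph{can we choose the radii so that the topological boundary of the box misses $G$?} The boundary of $B$ is $\bigcup_{i=1}^k \{x\in \overline B : x_i\in\partial B_i\}$. For a fixed $i$, the slice $\{x\in K: x_i = c\}$ for $c\in\partial B_i$ is a translate of $H_i$, and $G$ meets it in a translate of a coset of $G\cap H_i$ (if it meets it at all); since $G\cap H_i$ is not dense in $H_i$, such a translate is a nowhere dense subset of that slice. This is the leverage point: there are only finitely many coordinates $i$ and, for each, only two boundary values $c$, giving finitely many ``bad slices,'' each carrying a nowhere-dense trace of $G$; and we have a continuum of choices for each radius $r_i\in(0,\eps)$. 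A Fubini/Baire-type counting argument — or more simply, a direct recursive choice of $r_1,\dots,r_k$ where at stage $i$ one avoids the countably-or-measure-zero set of bad radii forced by the already-fixed coordinates — should let us select all radii simultaneously so that no point of $G$ lands on any face. The main obstacle I anticipate is making this simultaneous avoidance rigorous: the non-density of $G\cap H_i$ must be parlayed into a genuinely small (countable, or meager, or null) set of forbidden radii, which is where the structure of $\R$ and $\T$ (and the fact that a non-dense subgroup of $\R$ or $\T$ is cyclic, hence countable and even discrete when closed, so at worst countable in general) is indispensable, and one must handle the interaction across the $k$ coordinates carefully rather than coordinate-by-coordinate.
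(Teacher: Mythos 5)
Your overall architecture agrees with the paper's: produce, for every small $\eps>0$, an open box $O$ around $0$ in $K$ whose topological boundary in $K$ is disjoint from $G$, so that $O\cap G$ is clopen in $G$ and these traces form a clopen base at $0$. The gap sits in the only step carrying real content: how to make the boundary miss $G$. You propose to pick the radii generically, on the grounds that for each of the finitely many faces the set of ``bad'' radii is countable, meager or null. No argument is given for this, and it is not true in general. The bad set of levels $c$ for coordinate $i$ is $\pi_i\bigl(G\cap(\prod_{j\ne i}B_j\times K_i)\bigr)$, which is always \emph{dense} in $K_i$ because $G$ is dense in $K$; already for $k=2$, taking $G$ to be the graph of an injective discontinuous $\Q$-linear map $f:\R\to\R$ with dense graph, this bad set is $f^{-1}([y_2,z_2])$, an uncountable dense set that need not be measurable or have the Baire property, so no Fubini/Baire counting applies. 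Your one-dimensional lemma (a non-dense subgroup of $\R$ or $\T$ is cyclic, hence countable) does not rescue this: the relevant subgroups $G\cap H_i$ live in the $(k-1)$-dimensional slices $H_i$ and can be uncountable as soon as $k\ge 3$; and the observation that the trace of $G$ on a slice is a nowhere dense coset does not prevent that trace from meeting a face, since the face has nonempty interior in the slice. Finally, even granting each individual choice, the constraints are circular across coordinates (the bad set in direction $i$ depends on the boxes already fixed in the other directions), which you flag but do not resolve.

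The paper closes exactly this gap, and the mechanism is group-theoretic rather than category- or measure-theoretic. Non-density of $G\cap H_i$ in $H_i$ yields a whole open box $U_i$ of some radius $\delta$, centred at a point $x_i$ of the slice $H_i$, with $U_i\cap G=\emptyset$. Since $G$ is a subgroup, $g+U_i$ is again disjoint from $G$ for every $g\in G$; and since $G$ is dense in $K$, one can choose $g_i,h_i\in G$ whose coordinates $j\ne i$ lie within $\eps\ll\delta$ of $-x_i^j$ and whose $i$-th coordinates are small and lie on opposite sides of $0$. The two faces of the cube in direction $i$ are then placed precisely at the levels $g_i^i$ and $h_i^i$, and each face is contained in $g_i+U_i$ (respectively $h_i+U_i$), hence misses $G$. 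In short, the faces are not found by avoiding a small exceptional set of radii; they are manufactured by translating the forbidden box by elements of $G$. Without this idea (or an equivalent substitute) your argument does not close.
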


\begin{proof}
Let $i=1,\dots,k$.  If $K_i=\R$, then 
we denote by $d_i$ the usual metric on $K_i$. In case $K_i=\T$, we define $d_i(x,y)$ as the length of the shortest arc connecting $x,y\in K_i$.
Clearly, $d_i$ is a translation-invariant metric on $K_i$. For $x\in K_i$ and $\varepsilon>0$, let 
$U_i(x,\varepsilon)=\{y\in K_i: d_i(x,y)<\varepsilon\}$ denote the $\varepsilon$-neighbourhood of $x$. 
For $x,y\in K_i$ with $d_i(x,t)<\pi$, we use $(x,y)$ and $[x,y]$ to denote the shortest open arc and the shortest closed arc in $K_i$ connecting $x$ and $y$.

Using our assumption, we can fix a real number $\delta>0$ and  a point $x_i=(x_{i}^{1},\dots,x_{i}^{k})\in H_i$ for every $i=1,\dots,k$ such that 
\begin{equation}
\label{open:sets:avoiding:G}
G\cap \left(\prod_{j=1}^{i-1}
U_j(x_{i}^{j},\delta)
\times 
\{0\}\times
\prod_{j=i+1}^k 
U_j(x_{i}^{j},\delta)
\right)
=\emptyset
\mbox{ for all }
i=1,\dots,k.
\end{equation}

Since $G$ is a topological group, in order to show that $G$ is zero-dimensional, it suffices to prove that $G$ has a a local base at its identity element 
$(0,\dots, 0)$  consisting of clopen subsets of $G$.  Fix $\varepsilon>0$ such that $\varepsilon<\min\{\delta/4, \pi/2\}$.

Fix $i=1,\dots,k$. Since $\varepsilon<\pi/2$, there are exactly two elements $a_i,b_i\in K_i$ such that $d_i(0,a_i)=d_i(0,b_i)=\varepsilon$.  Then 
\begin{equation}
\label{eq:V_i}
V_i=
\prod_{j=1}^{i-1}
U_j(-x_{i}^{j},\varepsilon)
\times 
(0,a_i)\times
\prod_{j=i+1}^k 
U_j(-x_{i}^{j},\varepsilon)
\end{equation}
and
\begin{equation}
\label{eq:W_i}
W_i=
\prod_{j=1}^{i-1}
U_j(-x_{i}^{j},\varepsilon)
\times 
(0,b_i)\times
\prod_{j=i+1}^k 
U_j(-x_{i}^{j},\varepsilon)
\end{equation}
are non-empty open subsets of $K$. Since $G$ is dense in $K$, we can choose 
\begin{equation}
\label{g_i:h_i}
g_i=(g_i^1,\dots,g_i^k)\in G\cap V_i
\ \ 
\mbox{ and }
\ \ 
h_i=(h_i^1,\dots,h_i^k)\in G\cap W_i.
\end{equation}
Define $y_i=x_i+g_i$ and $z_i=y_i+h_i$. Let
\begin{equation}
\label{def:y_i:and:z_i}
y_i=(y_i^1,\dots,y_i^k)
\ \ 
\mbox{ and }
\ \ 
z_i=(z_i^1,\dots,z_i^k).
\end{equation}
Note that  $g_i^i\in (0,a_i)$  by \eqref{eq:V_i} and \eqref{g_i:h_i}.
Similarly, $h_i^i\in (0,b_i)$ by \eqref{eq:W_i} and \eqref{g_i:h_i}.
Since $x_i^i=0$, it follows from  $y_i=x_i+g_i$, $z_i=y_i+h_i$
and \eqref{def:y_i:and:z_i} that $y_i^i=g_i^i\in (0,a_i)$ and $z_i^i=h_i^i\in (0,b_i)$. Therefore, 
\begin{equation}
\label{interval:y_i^i:z_i^i}
0\in (y_i^i,z_i^i)\subseteq (0,a_i)\cup(0,b_i)=U_i(0,\varepsilon)
\end{equation}
by our choice of $a_i$ and $b_i$. It follows that
$$
O=\prod_{i=1}^k (y_i^i,z_i^i)
$$
is an open subset of $K$ with
\begin{equation}
\label{basic:cube}
(0,\dots,0)\in O\subseteq \prod_{j=1}^k U_j(0,\varepsilon).
\end{equation}
Clearly, the boundary $\mathrm{Bd}(O)$ of $O$ in $K$ has the form 
\begin{equation}
\label{boundary:form}
\mathrm{Bd}(O)=\bigcup_{i=1}^k 
\left(\prod_{j=1}^{i-1}[y_{i}^{j},z_{i}^{j}]
\times 
\{y_{i}^{i},z_{i}^{i}\}\times
\prod_{j=i+1}^k [y_{i}^{j},z_{i}^{j}]\right).
\end{equation}

\begin{claim}
\label{boundary:claim}
If $c=(c^1,\dots,c^k)\in \mathrm{Bd}(O)$, then  there exists $i=1,\dots,k$ such that
$c^i\in \{y_{i}^{i},z_{i}^{i}\}$ and
$c^j\in 
U_j(y_i^j, \delta)\cap U_j(z_i^j,\delta)$
for $j=1,\dots,k$
with 
$j\not=i$.
\end{claim}
\begin{proof}
Use \eqref{boundary:form}
to fix $i=1,\dots,k$ with
\begin{equation}
\label{a:in:a:face}
(c^1,\dots,c^k)\in 
\prod_{j=1}^{i-1}[y_{i}^{j},z_{i}^{j}]
\times 
\{y_{i}^{i},z_{i}^{i}\}\times
\prod_{j=i+1}^k [y_{i}^{j},z_{i}^{j}].
\end{equation}
In particular, 
$c^i\in \{y_{i}^{i},z_{i}^{i}\}$.
Let $j=1,\dots,k$ and $j\not=i$.
Since $g_i^j, h_i^j\in U_j(-x_{i}^{j},\varepsilon)$ 
by \eqref{eq:V_i}, \eqref{eq:W_i} and \eqref{g_i:h_i},
from $y_i=x_i+g_i$, $z_i=y_i+h_i$
and \eqref{def:y_i:and:z_i}
we conclude that
$y_i^j,z_i^j\in U_j(0,\varepsilon)$, 
which yields also
$c^j\in [y_i^j,z_i^j]\subseteq U_j(0,\varepsilon)$ by \eqref{a:in:a:face}.
Since  $\varepsilon<\delta/4$, it follows that
$U_j(0,\varepsilon)\subseteq U_j(y_i^j,\delta)\cap U_j(z_i^j,\delta)$. 
Therefore, 
$c^j\in U_j(y_i^j,\delta)\cap U_j(z_i^j,\delta)$. 
\end{proof}

\begin{claim}
\label{boundary:is:disjoint:from:G}
$\mathrm{Bd}(O)\cap G=\emptyset$.
\end{claim}
\begin{proof}
Let $c=(c^1,\dots,c^k)\in \mathrm{Bd}(O)$. We are going to show that $c\not\in G$. Let $i$ be as in the conclusion of Claim \ref{boundary:claim}. We consider two cases.

\smallskip
{\em Case 1\/}. $c^i=y_{i}^{i}$.
In this case $c^i-g_i^i=y_{i}^{i}-g_i^i=x_i^i$. Let $j=1,\dots,k$ and $j\not=i$.
Note that $c^j\in U_j(y_i^j, \delta)$ by Claim \ref{boundary:claim}.
Since the metric $d_j$ is translation invariant,
$$
c^j-g_i^j\in U_j(y_i^j, \delta)-g_i^j=U_j(y_i^j-g_i^j, \delta)
=
U_j(x_i^j, \delta).
$$
This shows that 
$$
c-g_i\in \prod_{j=1}^{i-1}
U_j(x_{i}^{j},\delta)
\times 
\{x_{i}^{i}\}\times
\prod_{j=i+1}^k 
U_j(x_{i}^{j},\delta).
$$
Now \eqref{open:sets:avoiding:G} gives $c-g_i\not\in G$. Since $g_i\in G$ and $G$ is a subgroup of $K$, it follows that $c\not\in G$.

\smallskip
{\em Case 2\/}. $c^i=z_{i}^{i}$.
In this case $c^i-h_i^i=z_{i}^{i}-h_i^i=x_i^i$. Let $j=1,\dots,k$ and $j\not=i$.
Note that $c^j\in U_j(z_i^j, \delta)$ by Claim \ref{boundary:claim}. Since  the metric $d_j$ is translation invariant,
$$
c^j-h_i^j\in U_j(z_i^j, \delta)-h_i^j=U_j(z_i^j-h_i^j, \delta)
=
U_j(x_i^j, \delta).
$$
This shows that 
$$
c-h_i\in \prod_{j=1}^{i-1}
U_j(x_{i}^{j},\delta)
\times 
\{x_{i}^{i}\}\times
\prod_{j=i+1}^k 
U_j(x_{i}^{j},\delta).
$$
Now \eqref{open:sets:avoiding:G} gives $c-h_i\not\in G$. Since
$h_i\in G$ and $G$ is a subgroup of $K$, it follows that  $c\not\in G$.
\end{proof}

Since $G$ is dense in $K$, from Claim \ref{boundary:is:disjoint:from:G} one easily concludes that $O\cap G$ is a clopen subset of $G$.
For each small enough $\varepsilon>0$ we have found an open subset $O$ of $K$ satisfying \eqref{basic:cube}
such that $O\cap G$ is a clopen subset of $G$. This proves that $G$ has a clopen base at $(0,\dots, 0)$.
\end{proof}

\begin{corollary}\label{corollary:R^n}
Let $G$ be a dense subgroup of $K=\R^m\times\T^n$, where $m, n\in \N$. If $G$ satisfies \Zm, then $G=K$. 
\end{corollary}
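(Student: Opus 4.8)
The plan is to prove the contrapositive: assuming $G \subsetneq K$, produce a non-discrete closed zero-dimensional metric subgroup of $G$, thereby violating \Zm. First I would dispose of the case $n = 0$ (that is, $K = \R^m$), where $G$ dense in $\R^m$ with $G \neq \R^m$ is itself a proper dense subgroup; I would still want to reduce to a situation where Theorem \ref{R^n} applies, so the natural move is to induct on $m + n$ using the projections onto coordinate hyperplanes. The base case $m + n = 1$ is handled directly: a proper dense subgroup of $\R$ or $\T$ is non-discrete, zero-dimensional (every proper subgroup of $\R$ or $\T$ is totally disconnected) and metric, so it already witnesses the failure of \Zm.

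For the inductive step with $k = m + n \ge 2$, I would consider the projections $\pi_i : K \to K_i$ and the subgroups $G \cap H_i$ of $H_i \cong \prod_{j \neq i} K_j$. If for some $i$ the subgroup $G \cap H_i$ is not dense in $H_i$, then either $G \cap H_i$ is a proper dense subgroup of its closure (a product of fewer copies of $\R$ and $\T$), in which case the inductive hypothesis applied to $\overline{G \cap H_i}$ finishes the job since $G \cap H_i$ is closed in $G$; or more precisely, I would rather argue: if $G \cap H_i$ fails to be dense in $H_i$ for \emph{every} $i$, then Theorem \ref{R^n} applies directly and tells us $G$ itself is zero-dimensional, and since $G$ is dense in the positive-dimensional $K$ it is non-discrete and metrizable, so \Zm\ fails. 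Thus the only remaining case is that $G \cap H_i$ is dense in $H_i$ for some $i$; then $G \cap H_i$ is a closed subgroup of $G$, and either it equals $H_i$ or it is a proper dense subgroup of $H_i \cong \prod_{j\neq i} K_j$ to which induction applies, yielding again a bad subgroup inside $G$. If $G \cap H_i = H_i$ for that $i$, I would quotient: $G / H_i$ is a dense subgroup of $K / H_i \cong K_i$, and it must be proper (else $G = K$, contradiction) unless... here one has to be slightly careful, so the cleanest bookkeeping is to pick $i$ maximal or to peel off one factor at a time, reducing the dimension and invoking induction.

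The main obstacle I anticipate is the case analysis around $G \cap H_i = H_i$: when $G$ contains a whole coordinate hyperplane, zero-dimensionality of $G$ fails outright, so Theorem \ref{R^n} is useless and one must genuinely pass to the quotient $G/H_i \hookrightarrow K_i$ and argue there, then lift a bad subgroup back through the quotient map — but lifting a non-discrete zero-dimensional metric subgroup through $K \to K_i$ is not automatic, since preimages of zero-dimensional sets in $K$ need not be zero-dimensional. The resolution I would pursue is to observe that if $G/H_i$ is a proper dense subgroup of $K_i \cong \R$ or $\T$, it contains a cyclic (or finitely generated) dense subgroup $D/H_i$ that is non-discrete and topologically isomorphic to a subgroup of $\R$ or $\T$; but its preimage $D$ in $G$ contains $H_i$ and is therefore not zero-dimensional. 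So instead I would avoid lifting altogether: choose the smallest coordinate subtorus/subspace contained in $\overline{G}$-closures appropriately, i.e. replace $K$ by $K/N$ where $N$ is the largest coordinate sub-$(\R \text{ or } \T)$-product contained in $G$; then $G/N$ is a dense proper subgroup of $K/N$ with $(G/N) \cap (H_i/N)$ not dense for every surviving $i$, Theorem \ref{R^n} gives that $G/N$ is zero-dimensional, and now $G/N$ is a non-discrete metric zero-dimensional group — but this is a \emph{quotient} of $G$, not a subgroup, so one still needs the final twist: a metrizable precompact zero-dimensional non-discrete group contains a non-trivial convergent sequence, hence a compact metric zero-dimensional infinite subgroup is not directly available either. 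The honest fix is that $G/N$ being zero-dimensional metrizable precompact and non-discrete, its completion is a compact metrizable zero-dimensional infinite group, which by van Dantzig contains an infinite compact open subgroup $C$; pulling $C$ back to a closed subgroup of $\overline{G}$ and intersecting suitably with $G$ requires that $G$ map onto $C$, which it does by density — and then a diagonal/section argument inside $G$ recovers an honest infinite closed zero-dimensional metric subgroup of $G$. Getting this last pull-back argument clean, without tacitly using completeness of $G$, is where the real work lies.
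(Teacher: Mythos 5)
Your overall skeleton---induction on $k=m+n$, an appeal to Theorem~\ref{R^n} when no $G\cap H_i$ is dense in $H_i$, and the inductive hypothesis when some $G\cap H_i$ is a \emph{proper} dense subgroup of $H_i$---matches the paper's proof. But the case you correctly single out as the real difficulty, namely $H_i\subseteq G$, is left genuinely unresolved, and the detour you sketch through $K/N$, completions and van Dantzig does not close it: as you yourself note, $G/N$ is a quotient rather than a subgroup of $G$, the preimage in $G$ of any infinite compact open subgroup of the completion of $G/N$ contains $N$ and so is not zero-dimensional, and a ``section'' of the quotient map need not be a subgroup, let alone a closed one. You end by conceding that this pull-back is ``where the real work lies,'' which is an admission that the proof is incomplete at exactly that step.

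The missing idea is elementary and makes all of that machinery unnecessary: if $H_i\times\{0\}\subseteq G$, then $G$ \emph{splits}. Indeed, writing $g=(h,t)$ with $h\in H_i$ and $t\in K_i$, the hypothesis gives $(h,0)\in G$, hence $(0,t)=g-(h,0)\in G$; therefore $G=H_i\times(G\cap K_i)$. Thus the quotient $G/H_i$ you were trying to analyze is realized \emph{inside} $G$ as the subgroup $G\cap K_i$, which is closed in $G$ (the coordinate axis $K_i$ is closed in $K$), dense in $K_i$ (by density of $G$ together with the splitting), and inherits \Zm\ as a closed subgroup. The one-dimensional base case then forces $G\cap K_i=K_i$, whence $K=H_i\times K_i\subseteq G$. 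This is precisely how the paper handles that case; with this observation your argument closes up, and without it there is a genuine gap.
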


\begin{proof}
We prove our corollary by induction on $k=m+n$. For $k=1$ the assertion follows from the obvious fact that all proper dense subgroups of $\R$ or $\T$ are zero-dimensional,  so do not satisfy \Zm. Suppose that $k\in\N$, $k\ge 2$ and our 
corollary has already been proved $k-1$. For $i=1,\dots,k$, define $K_i=\R$ for $i\le m$ and $K_i=\T$ for $m<i<k$, so that $K=\prod_{i=1}^k K_i$.
Since $G$ is dense in $K$, it is non-discrete. Since $G$ is satisfies \Zm, it cannot be zero-dimensional. 
Applying Theorem \ref{R^n}, we conclude that there exists $i=1,\dots,k$ such that $G_i=H_i \cap G$ must be dense in $H_i=\{(x_1,\dots,x_k)\in K: x_i=0\}$.
Since $G_i$ is a closed subgroup of the group $G$ having property \Zm,
$G_i$ also has the same property. Since $G_i$ is dense in $H_i$, we can apply our inductive assumption to the pair of $G_i$ and $H_i$ to conclude that
$G_i = H_i$. Now, the splitting $K = H_i \times K_i$ produces the splitting
$G = H_i \times (G\cap K_i)$, as $H_i \times \{0\}\subseteq G$. The density of $G$ in $K$ yields that $G\cap K_i$ is dense in $K_i$. Since $G\cap K_i$
is a closed subgroup of $G$, it must satisfy \Zm.
Therefore, $G\cap K_i = K_i$ by the basis of our induction, and so $\{0\}\times K_i\subseteq G$.
Since $H_i\times\{0\}\subseteq G$ holds as well, we get  $K=H_i\times K_i\subseteq G$.
\end{proof}

\begin{corollary}\label{subgroups:of:Lie:having:Zm}
Let $G$ be a subgroup of an abelian Lie group $K$. If $G$ satisfies \Zm, then $G$ is closed in $K$; in particular $G$ is a Lie group itself.
\end{corollary}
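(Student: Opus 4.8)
\textbf{Proof proposal for Corollary \ref{subgroups:of:Lie:having:Zm}.}
The plan is to reduce the general statement to Corollary \ref{corollary:R^n} via the structure theory of abelian Lie groups. First I would recall that every abelian Lie group $K$ has an open subgroup of the form $\R^m\times\T^n\times D$ for some $m,n\in\N$ and some discrete (in fact countable) group $D$; more precisely, the connected component $c(K)$ of $K$ is a closed subgroup isomorphic to $\R^m\times\T^n$, it is open (Lie groups are locally connected, hence almost connected), and $K/c(K)$ is discrete. So it suffices to handle the case where $K$ itself is $\R^m\times\T^n$, provided I first pass to a suitable piece of $G$.

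Next I would observe that the property \Zm\ passes to closed subgroups: if $G$ satisfies \Zm\ and $F$ is a closed subgroup of $G$, then every closed zero-dimensional metric subgroup of $F$ is such a subgroup of $G$, hence discrete, so $F$ satisfies \Zm. Apply this to $F=G\cap c(K)$, which is closed in $G$ and is a subgroup of the Lie group $c(K)\cong\R^m\times\T^n$. Let $L$ be the closure of $F$ in $c(K)$; then $L$ is a closed, hence Lie, subgroup of $\R^m\times\T^n$, so $L$ itself is (topologically isomorphic to) a group of the form $\R^{m'}\times\T^{n'}\times(\text{finite})$, and in particular $L$ again splits as $\R^{m'}\times\T^{n'}$ after absorbing the finite factor into the torus part (or one argues directly on a connected open subgroup of $L$). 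Now $F$ is a dense subgroup of such an $L$ satisfying \Zm, so Corollary \ref{corollary:R^n} gives $F=L$; that is, $G\cap c(K)$ is closed in $c(K)$, equivalently closed in $K$.

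To finish, I would push this local conclusion to all of $G$. Since $c(K)$ is open in $K$, the intersection $G\cap c(K)$ is an open subgroup of $G$; a topological group in which some open subgroup is closed has all its cosets of that subgroup closed, and a subgroup that contains an open subgroup is itself open, hence closed (its complement is a union of cosets). Concretely: $G\cap c(K)$ is closed in $K$ and open in $G$, so it is also closed in $G$; being open in $G$ it has a clopen image, and $G=\bigcup_{g\in G} g(G\cap c(K))$ where each coset $g(G\cap c(K))$ is the intersection of $G$ with the closed-in-$K$ set $g\,\overline{G\cap c(K)}=g(G\cap c(K))$ — so $\overline{G}\subseteq G$ coset by coset, and thus $G$ is closed in $K$. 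A closed subgroup of a Lie group is a Lie group, which yields the ``in particular'' clause.

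The main obstacle I anticipate is purely bookkeeping: making sure the closure $L=\overline{G\cap c(K)}$ inside $\R^m\times\T^n$ is genuinely of the form to which Corollary \ref{corollary:R^n} applies (a product of copies of $\R$ and $\T$ with no leftover finite or discrete part) and that $G\cap c(K)$ is \emph{dense} in it by construction. The structural input — closed connected subgroups of $\R^m\times\T^n$ are again such products, and a closed subgroup is the product of its identity component with a finite group — is standard Lie theory, but it must be invoked carefully so that the hypotheses of Corollary \ref{corollary:R^n} are met verbatim; everything after that is the soft topological argument about open-and-closed subgroups above.
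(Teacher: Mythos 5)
Your strategy is essentially the one the paper uses: reduce to Corollary \ref{corollary:R^n} via the structure theorem $K\cong\R^m\times\T^n\times D$ ($D$ discrete) together with the openness of $c(K)$, and then propagate from $c(K)$ to all of $K$ using discreteness of the quotient. The paper streamlines this by first replacing $K$ with $\overline{G}$ (a closed subgroup of a Lie group is Lie), so that $G$ may be assumed dense in $K$ from the outset; then $G\cap c(K)$ is automatically a \emph{dense} subgroup of $c(K)\cong\R^m\times\T^n$ and Corollary \ref{corollary:R^n} applies verbatim, with no need to analyze the closure $L$ of $G\cap c(K)$. Because you skip this reduction, you are led to two structural claims that are false as stated: a closed subgroup $L$ of $\R^m\times\T^n$ need not have finite discrete part (e.g.\ $\Z^m\subseteq\R^m$), and a nontrivial finite factor can never be ``absorbed into the torus part'' ($\T\times\Z(2)$ is disconnected while $\T$ is not). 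Fortunately your parenthetical fallback is exactly the right repair: $F\cap c(L)$ is dense in $c(L)\cong\R^{m'}\times\T^{n'}$ and closed in $F$, hence satisfies \Zm, so Corollary \ref{corollary:R^n} gives $c(L)\subseteq F$; then $F$ is an open dense subgroup of $L$ and so equals $L$. Finally, your coset-by-coset closure argument is sound but silently uses that each coset of $c(K)$ in $K$ contains at most one translate $g(G\cap c(K))$ with $g\in G$, so that the family of these translates is discrete in $K$ and its union is closed; equivalently and more cleanly, $H=G\cap c(K)=\overline{G}\cap c(K)$ is an open subgroup of $\overline{G}$ contained in $G$, whence $G$ is open, hence closed, in $\overline{G}$ and $G=\overline{G}$.
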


\begin{proof} Since closed subgroups of Lie groups are Lie, we may assume, without loss of generality, that $G$ is dense in $K$.  It is known that $K \cong \R^m\times \T^n \times D$ for some $m, n\in \N$ and some discrete abelian group $D$ \cite{HM}. Now  $c(K)=\R^m\times \T^n \times \{0\}$ is a clopen subgroup of $K$, so $G^*=G\cap c(K)$ is a dense subgroup of $c(K)\cong \R^m\times \T^n$ satisfying \Zm, being a closed subgroup of $G$.  Therefore, $G^*= c(K)$ by Corollary \ref{corollary:R^n}; that is, $\R^m\times \T^n \times \{0\}\subseteq G$. Since $G$ is dense in $K$ and $D$ is discrete, one easily concludes that $\{0\}\times\{0\}\times D\subseteq G$. This gives $K=\R^m\times \T^n \times D\subseteq G$.
\end{proof}

\begin{remark}
The Lie group $\R$ contains the group $\mathbb{Q}$ of rational numbers as its dense proper (thus, non-closed) subgroup. Since
$\mathbb{Q}$ is countable,  it has property \Zcm\ by Corollary \ref{small:groups:are:Zcm}. This example shows that both
Corollaries \ref{corollary:R^n} and \ref{subgroups:of:Lie:having:Zm} fail if one replaces the assumption ``$G$ has property \Zm'' in them with the weaker assumption   ``$G$ has property \Zcm''.
\end{remark}

\section{The structure of almost connected compact groups with Lie center}

In this section  we obtain a structure theorem for almost connected compact groups having Lie center; see 
Theorem \ref{almost:connected}.
In order to achieve this, we use the dynamical 
properties of the action of the group on the semisimple derived group of its connected component.

We shall need the following folklore fact concerning the dichotomy related to normal subgroups of a direct product containing a simple non-abelian direct summand.  For the sake of completeness, we include  its  proof.
 
 \begin{lemma}
\label{dichotomy:for:product:of:two}
Let  $G = H\times S$ be the direct product  of a group $H$ and a simple non-abelian group $S$,
and let $p_S: G\to S$ be the projection on the second coordinate. If $N$ is a normal subgroup of $G$ and
$p_S(N)\not=\{e_S\}$, then $\{e_H\} \times S\subseteq N$. In particular, either $N \subseteq H \times \{e_S\}$ or $\{e_H\} \times S\subseteq N$.
\end{lemma}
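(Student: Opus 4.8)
The plan is to exploit the normality of $N$ together with the simplicity of $S$, using commutators to transfer a nontrivial element of $p_S(N)$ into the copy $\{e_H\}\times S$ inside $N$. First I would pick $n=(h,s)\in N$ with $s=p_S(n)\neq e_S$. For an arbitrary $t\in S$, the element $(e_H,t)\in G$ conjugates $n$, and since $N$ is normal, $(e_H,t)\,n\,(e_H,t)^{-1}\in N$; a direct computation gives $(e_H,t)(h,s)(e_H,t)^{-1}=(h,tst^{-1})$. Multiplying this on the left by $n^{-1}=(h^{-1},s^{-1})$ (again using that $N$ is a subgroup) yields the commutator $[n,(e_H,t)]=(e_H,\,s^{-1}tst^{-1})\in N$. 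Hence $N$ contains $\{e_H\}\times C$, where $C=\langle s^{-1}\cdot s^t : t\in S\rangle$ is the subgroup of $S$ generated by all commutators $[s^{-1},t^{-1}]$-type elements $s^{-1}(tst^{-1})$.

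Next I would argue that this $C$ is exactly $S$. Let $C'\subseteq S$ be the normal closure in $S$ of the set $\{s^{-1}tst^{-1}:t\in S\}$. Since $S$ is simple and $C'$ is a normal subgroup, either $C'=\{e_S\}$ or $C'=S$. If $C'=\{e_S\}$, then $s^{-1}tst^{-1}=e_S$ for all $t\in S$, i.e. $s$ is central in $S$; but a simple non-abelian group has trivial center, so this forces $s=e_S$, contradicting our choice. Therefore $C'=S$. Since $C$ as defined above already contains the full set $\{s^{-1}tst^{-1}:t\in S\}$, and $N$ is a subgroup, it suffices to note that $\{e_H\}\times S$ is generated \emph{inside} $\{e_H\}\times S$ by those same generators together with their conjugates — but conjugating $(e_H, s^{-1}tst^{-1})$ by $(h',u)\in G$ lands in $(e_H, u(s^{-1}tst^{-1})u^{-1})$, which is again in $N$ by normality of $N$ in $G$. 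So $N$ contains $\{e_H\}\times C''$ where $C''$ is the normal closure of $\{s^{-1}tst^{-1}\}$ in $S$, which we just showed is all of $S$. Hence $\{e_H\}\times S\subseteq N$.

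Finally, the ``in particular'' clause is immediate: given a normal subgroup $N$ of $G$, either $p_S(N)=\{e_S\}$, in which case $N\subseteq H\times\{e_S\}$, or $p_S(N)\neq\{e_S\}$, in which case the first part gives $\{e_H\}\times S\subseteq N$. This dichotomy is precisely the stated conclusion.

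The only mildly delicate point — and the one I would be most careful about — is making sure the passage from ``$N$ contains all the commutator elements $(e_H, s^{-1}tst^{-1})$'' to ``$N$ contains $\{e_H\}\times S$'' is airtight: one must use that the \emph{normal closure} in $S$ of those elements is $S$, and that this normal closure is realized inside $N$ because conjugation by elements of $\{e_H\}\times S\subseteq G$ preserves $N$. Everything else is routine group arithmetic in a direct product, so I would keep those computations terse.
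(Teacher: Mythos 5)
Your proof is correct and follows essentially the same route as the paper's: you commutate a preimage $n=(h,s)$ of a non-trivial element of $p_S(N)$ with elements of $\{e_H\}\times S$ to land non-trivial elements in $N\cap(\{e_H\}\times S)$, then use simplicity of $S$ (and the consequent triviality of $Z(S)$) to conclude that the subgroup of $S$ so obtained is all of $S$. The only cosmetic difference is that the paper first shows $p_S(N)=S$ and then captures all commutators, obtaining $\{e_H\}\times S'=\{e_H\}\times S$, whereas you take the normal closure in $S$ of the conjugate-commutators of a single $s$; both variants are valid.
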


\begin{proof} 
Since $p_S(N)\not=\{e_S\}$, we can pick
an element $n= (h,s) \in N$ with $s\ne e_S$. Since $Z(S)$ is trivial, there exists  $b\in S$ such that $[b,s]= bsb^{-1}s^{-1} \ne e_S$. Let $x = (e_H,b) \in G$. Then $[x,n]= xnx^{-1}n^{-1} = (e_H, [b,s]) \in  \{e_H\} \times S$. As $N$ is a normal subgroup of $G$, one has $[x,n] \in N$. Therefore,
$e_S\ne [b,s]=p_S([x,n])\in p_S(N)$.
This proves that $p_S(N)$ is a non-trivial normal subgroup of $S$. Since $S$ is simple, we conclude that $p_S(N) = S$. The above argument shows that if $s\in p_S(N)$, then $(e_H, [b,s])  \in (\{e_H\} \times S) \cap N$. As $p_S(N) = S$, this proves that $ \{e_H\} \times S' \subseteq (\{e_H\} \times S) \cap N$. Since $S$ is simple, $S' = S$, hence  $\{e_H\} \times S \subseteq N$. 
\end{proof} 

\begin{lemma}
\label{simple:subgroups:coincides:with:the:factor}
Assume that $\{L_i: i\in I\}$ is a family of simple non-abelian topological groups, 
$R=\prod_{i\in I} L_i$ is its direct product equipped with the Tychonoff product topology, $M$ is a topological group 
and 
$N$ is a simple closed normal subgroup of the product $M\times R$.
Then either $N\subseteq M$ or $N=L_k$ for some $k\in I$.
\end{lemma}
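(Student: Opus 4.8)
The plan is to reduce to the case of two factors via a coordinate argument. Write $R=\prod_{i\in I}L_i$ and let $p_M\colon M\times R\to M$ and, for each $i\in I$, $p_i\colon M\times R\to L_i$ denote the canonical projections. The key observation is that the image $p_i(N)$ is a normal subgroup of $L_i$, because $N$ is normal in $M\times R$ and $p_i$ is a continuous surjective homomorphism onto $L_i$; since $L_i$ is simple non-abelian, $p_i(N)$ is either $\{e_{L_i}\}$ or all of $L_i$. First I would dispose of the case in which $p_i(N)=\{e_{L_i}\}$ for every $i\in I$: then $N\subseteq M\times\{e_R\}$, i.e. $N\subseteq M$, and we are done. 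So from now on assume there is some $k\in I$ with $p_k(N)=L_k$.

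Next I would isolate that coordinate. Split the product as $M\times R=(M\times R')\times L_k$, where $R'=\prod_{i\in I\setminus\{k\}}L_i$; under this identification $M\times R'$ plays the role of ``$H$'' and $L_k$ the role of ``$S$'' in Lemma~\ref{dichotomy:for:product:of:two}. Since $N$ is normal in $M\times R$ and its projection onto the simple non-abelian factor $L_k$ is nontrivial (indeed onto), Lemma~\ref{dichotomy:for:product:of:two} gives $\{e_{M\times R'}\}\times L_k\subseteq N$. In other words $N$ contains the closed normal subgroup $L_k$ of $M\times R$. But $N$ is \emph{simple}, so a nontrivial normal subgroup of $N$ contained in $N$ must equal $N$; since $\{e\}\times L_k$ is nontrivial and normal in $M\times R$, it is in particular normal in $N$, whence $N=L_k$, as required. (One should note that $\{e\}\times L_k$ is indeed a \emph{closed} subgroup of $M\times R$, so this does not clash with $N$ being closed.)

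The only point that needs a little care — and the one I expect to be the main obstacle — is the very first reduction: concluding that if $p_i(N)=\{e_{L_i}\}$ for all $i$ then $N\subseteq M$. This is immediate set-theoretically ($x\in N$ has all its $R$-coordinates trivial, hence lies in $M\times\{e_R\}$), so there is no real difficulty; the genuine content is entirely in Lemma~\ref{dichotomy:for:product:of:two}, which handles the two-factor situation, together with the simplicity of $N$ used to upgrade the inclusion $\{e\}\times L_k\subseteq N$ to the equality $N=L_k$. No topology beyond the fact that projections are continuous surjections and that closed subgroups behave well under the product splitting is needed, so the proof is short once the reduction to Lemma~\ref{dichotomy:for:product:of:two} is set up.
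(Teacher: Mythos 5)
Your proposal is correct and follows essentially the same route as the paper: project onto the coordinates, note that if all projections to the $L_i$ are trivial then $N\subseteq M$, otherwise apply Lemma~\ref{dichotomy:for:product:of:two} with $H=M\times\prod_{i\in I\setminus\{k\}}L_i$ and $S=L_k$ to get $L_k\subseteq N$, and then use simplicity of $N$ together with normality of $L_k$ in $M\times R$ to conclude $N=L_k$. The only cosmetic difference is that you observe each $p_i(N)$ is trivial or all of $L_i$, whereas the paper only needs $p_k(N)\neq\{e_{L_k}\}$ to invoke the dichotomy lemma.
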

\begin{proof}
Suppose that $N$ is not a subgroup of $M$. Then
there exists $k\in I$ such that $p_k(N)\not=\{e_{L_k}\}$, where $p_k: M\times R\to L_k$ is the projection.
Applying Lemma \ref{dichotomy:for:product:of:two} to $H=M\times \prod_{i\in I\setminus \{k\}}$ and $S=L_k$, we conclude that $L_k\subseteq N$. (Here we identify $L_k$ with the subgroup of $M\times R$ in a natural way.) Since $L_k$ is a closed normal subgroup of $M\times R$ (and thus, also of $N$), from the simplicity of $N$ one gets $L_k=N$.
\end{proof}

Given an action $G\times X\to X$, $(g,x)\mapsto gx$ for $g\in G$ and $x\in X$, of a group $G$ on a set $X$, we say that a subset $Y$ of $X$ is {\em $G$-invariant\/} if
$gy\in Y$ whenever $g\in G$ and $y\in Y$.

\begin{lemma}
\label{dynamical}
Suppose that $I$ is an infinite set, $\{L_i:i\in I\}$ is a family of 
simple non-abelian
topological groups, $R= \prod_{i\in I} L_i$ is their Cartesian product equipped with the Tychonoff product topology, 
and
$G\times R\to R$ is an action of a group $G$ on $R$ by  continuous 
automorphisms
such that
$\{g(L_i):g\in G\}$ is finite 
for every 
$i\in I$. 
Then there exists a partition $I=\bigcup\{I_j:j\in J\}$ of $I$ into non-empty finite sets $I_j$ such that  $N_j=\prod_{i\in I_j} L_i$ is $G$-invariant for every $j\in J$.
\end{lemma}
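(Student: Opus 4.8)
The plan is to construct the partition by taking the equivalence classes of the smallest equivalence relation on $I$ that makes each $G$-orbit of indices (coming from the action permuting the factors) lie inside a single class, and then to show these classes are finite. First I would observe that the hypothesis ``$\{g(L_i):g\in G\}$ is finite'' together with Lemma \ref{simple:subgroups:coincides:with:the:factor} gives that for each $i\in I$ and each $g\in G$, the subgroup $g(L_i)$ is \emph{again} one of the coordinate factors $L_{\sigma_g(i)}$ for a uniquely determined index $\sigma_g(i)\in I$; here one uses that $g(L_i)$ is a simple closed normal subgroup of $R$ (normality because $g$ is an automorphism and $L_i\trianglelefteq R$, closedness because $g$ is a homeomorphism), hence by the lemma it coincides with some $L_k$. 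Thus the action of $G$ on $\{L_i:i\in I\}$ is by permutations, inducing a homomorphism $G\to\mathrm{Sym}(I)$, $g\mapsto\sigma_g$, and the finiteness assumption says precisely that every $G$-orbit on $I$ is finite.

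Next I would define a relation on $I$ by declaring $i\sim i'$ iff $i'$ lies in the $G$-orbit of $i$. Since $g\mapsto\sigma_g$ is a group homomorphism this is genuinely an equivalence relation (reflexivity from $e\in G$, symmetry from $g^{-1}$, transitivity from composition), and each class is a $G$-orbit, hence finite by hypothesis. Let $I=\bigcup\{I_j:j\in J\}$ be the partition into these classes, all non-empty and finite, and set $N_j=\prod_{i\in I_j}L_i\le R$. It remains to check $G$-invariance: for $g\in G$ and $j\in J$ one has $g(N_j)=g\bigl(\prod_{i\in I_j}L_i\bigr)$; since $g$ is an automorphism of $R$ it carries the (internal) product of the $L_i$, $i\in I_j$, to the product of the $g(L_i)=L_{\sigma_g(i)}$, $i\in I_j$, and as $I_j$ is $G$-invariant (being a $G$-orbit) we have $\sigma_g(I_j)=I_j$, so $g(N_j)=\prod_{i\in I_j}L_i=N_j$.

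The only genuinely delicate point is the very first step, namely justifying that $g(L_i)$ is one of the coordinate factors rather than merely \emph{some} simple closed normal subgroup of $R$ — this is exactly where Lemma \ref{simple:subgroups:coincides:with:the:factor} is invoked, applied with $M$ trivial (or absorbed: write $R=\{e\}\times R$), so that ``either $N\subseteq M$ or $N=L_k$'' forces $g(L_i)=L_k$ for some $k$. One small subtlety to address carefully: that $g(L_i)$ is \emph{closed} in $R$. This follows because $g$ is a continuous automorphism whose inverse $g^{-1}$ is also in $G$ and hence continuous, so $g$ is a homeomorphism of $R$ and maps the closed subgroup $L_i$ onto a closed subgroup. (If one only assumes continuity of $g$ but not of $g^{-1}$, one can instead note that in the intended application $R$ is compact, so the continuous image of the compact — hence closed — subgroup $L_i$ is compact, hence closed; but under the stated hypothesis ``action by continuous automorphisms'' the group-action axioms give that each $g$ acts invertibly with inverse the action of $g^{-1}$, which is again continuous, so the homeomorphism argument suffices.) Everything after this reduction is bookkeeping with orbits and products, with no further obstacle.
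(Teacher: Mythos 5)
Your proposal is correct and follows essentially the same route as the paper: apply Lemma \ref{simple:subgroups:coincides:with:the:factor} (with $M$ trivial) to see that each $g(L_i)$ is again a coordinate factor, obtain an induced action of $G$ on $I$ with finite orbits, and take the orbits as the sets $I_j$. Your extra care about why $g(L_i)$ is closed and about the equivalence-relation bookkeeping is fine but adds nothing beyond what the paper's proof already contains.
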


\begin{proof} Let $g\in G$ and $j\in I$ be arbitrary. 
Since $g$ is a continuous automorphism of $R$,  $g(L_j)$ is a closed normal subgroup of $R$.
Since $g(L_j) \cong L_j$ is simple, 
Lemma \ref{simple:subgroups:coincides:with:the:factor} applied to $M=\{0\}$ and $N=g(L_j)$ allows us to find
a unique index $\phi(g,j) \in I$ such that $g(L_j) = L_{\phi(g,j)}$.
This argument shows that the action of $G$ on $R$ induces an action $\phi: G \times I \to I$ of $G$ on the index set $I$.
By our assumption on the action $G\times R\to R$, the orbit 
$I_j=\{\phi(g,j):g\in G\}$ of each $j\in I$ is finite.
Choose $J\subseteq I$ such that $\{I_j: j\in J\}$ is a faithful enumeration of all orbits of $I$. Then $I=\bigcup\{I_j:j\in J\}$ is the desired decomposition,
as $j\in I_j\not=\emptyset$ 
and $N_j=\prod_{i\in I_j} L_i$ is clearly $G$-invariant for every $j\in I$.
\end{proof}

We shall need in the sequel the following theorem collecting  useful facts about the structure of compact connected groups.
Items (a)--(c) are well known; see \cite[Theorem 9.24]{HM}. Item (b) is known as theorem of Varopoulos \cite{V}.
To the best of our knowledge, item (d) is new, and it is this item that will be essentially used in the proof of Theorem \ref{almost:connected} and Lemma \ref{compact:connected:tds:is:Lie} below.

\begin{theorem}\label{compact:connected:group:with:Lie:center}
Let $K$ be a compact connected group. 
\begin{itemize}
  \item[(a)] $K'$ is a connected compact group, $K = K'\cdot Z(K)$ and $Z(K')= Z(K) \cap K'$ is zero-dimensional.
  \item[(b)] $K/Z(K)=\prod_{i\in I}L_i$ is a product of non-trivial  simple compact connected Lie groups $L_i$.
  \item[(c)] For $i\in I$, let $\widetilde L_i$ be the covering group of $L_i$ and $Z_i = Z(\widetilde L_i)$. Then each $Z_i$ is finite and  $\widetilde L_i/Z_i\cong L_i$. Define $ L = \prod_{i\in I}\widetilde L_i $  and $Z = Z(L) =\prod_{i\in I} Z_i$.  Then there exists a closed subgroup $N$ of $Z$ such that $L/N \cong K'$ and $Z/N \cong Z(K')$. 
\item[(d)] If $Z(K)$ is a Lie group, then the following are equivalent: 
\begin{itemize}
  \item[(d$_1$)] $K$ is not a Lie group; 
  \item[(d$_2$)] $I$ is infinite; 
  \item[(d$_3$)] $I$ is infinite and there exist a finite $I_0 \subseteq I$, a 
closed characteristic Lie subgroup $M$ of $K'$  and a (closed)
 characteristic subgroup $R \cong \prod_{i\in I\setminus I_0}L_i$ of $K'$ such that $K' \cong M \times R$; in particular,  all normal subgroups of $M$ and $R$ are also normal subgroups of $K$. 
\end{itemize}
\end{itemize}
 \end{theorem}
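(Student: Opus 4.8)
My plan is to regard items (a)--(c) as known --- they are collected from \cite[Theorem~9.24]{HM} and the theorem of Varopoulos \cite{V} --- and to prove the new item (d), i.e.\ the equivalence of (d$_1$), (d$_2$) and (d$_3$) under the standing hypothesis that $Z(K)$ is a Lie group. The implication (d$_3$)$\to$(d$_2$) is immediate. For (d$_2$)$\to$(d$_1$): if $I$ is infinite then, by (b), $K/Z(K)=\prod_{i\in I}L_i$ is an infinite product of non-trivial compact connected Lie groups; every neighbourhood of its identity contains the non-trivial subgroup $\prod_{i\in I\setminus F}L_i$ for a suitable finite $F\subseteq I$, so $K/Z(K)$ has small subgroups and hence is not a Lie group (Lie groups are NSS). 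Since $Z(K)$ is closed and normal, $K/Z(K)$ is a Hausdorff quotient of $K$, and a quotient of a Lie group is a Lie group; therefore $K$ is not a Lie group.

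The substance of the proof is (d$_1$)$\to$(d$_3$), so assume $K$ is not a Lie group. \emph{Step 1: $I$ is infinite.} If $I$ were finite, then $L=\prod_{i\in I}\widetilde L_i$ of (c) would be a finite product of compact Lie groups, hence a compact Lie group, so $K'=L/N$ would be a Lie group; since $K=K'\cdot Z(K)$ by (a) and $Z(K)$ is Lie, $K$ would be a compact Hausdorff quotient of the Lie group $K'\times Z(K)$ under $(k,z)\mapsto kz$ (a homomorphism, as $Z(K)$ is central), hence a Lie group --- a contradiction. \emph{Step 2: the algebraic splitting.} By (a), $Z(K')$ is zero-dimensional; as a closed subgroup of the Lie group $Z(K)$ it is itself a Lie group, hence discrete, hence finite. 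By (c), $Z/N\cong Z(K')$ is finite, so $N$ is an \emph{open} subgroup of the profinite group $Z=\prod_{i\in I}Z_i$; consequently $I_0=\{i\in I:Z_i\not\subseteq N\}$ is finite and $\prod_{i\in I\setminus I_0}Z_i\subseteq N$. Writing $L=L_0\times R_0$ with $L_0=\prod_{i\in I_0}\widetilde L_i$ and $R_0=\prod_{i\in I\setminus I_0}\widetilde L_i$, and $Z=Z_0\times Z'$ accordingly, the inclusion $Z'\subseteq N$ forces $N=N_0\times Z'$ with $N_0=N\cap Z_0$; hence $K'=L/N\cong(L_0/N_0)\times(R_0/Z')$. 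Put $M=L_0/N_0$, a compact (semisimple connected) Lie group, and $R=R_0/Z'\cong\prod_{i\in I\setminus I_0}(\widetilde L_i/Z_i)\cong\prod_{i\in I\setminus I_0}L_i$ by (c); then $K'\cong M\times R$, with $M$ and $R$ closed subgroups of $K'$.

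\emph{Step 3: $M$ and $R$ are characteristic, and their normal subgroups are normal in $K$.} The key is an intrinsic description of $R$: I claim $R$ is the closed subgroup of $K'$ generated by all \emph{non-abelian topologically simple closed normal subgroups} of $K'$. Each $L_i$ ($i\in I\setminus I_0$) is such a subgroup. Conversely, if $S$ is a non-abelian topologically simple closed normal subgroup of $K'\cong M\times R$, then Lemma~\ref{simple:subgroups:coincides:with:the:factor}, applied to the product $R=\prod_{i\in I\setminus I_0}L_i$ and the factor $M$, gives $S=L_k$ for some $k\in I\setminus I_0$ --- \emph{provided} the alternative $S\subseteq M$ is ruled out; and it is, because for each $i\in I_0$ we have $Z_i\not\subseteq N_0$, so the image of $\widetilde L_i$ in $M$ is a simple factor of $M$ with non-trivial centre lying in $Z(M)$, and then a short argument with identity components shows that a compact connected Lie group all of whose simple factors have non-trivial centre contains no non-abelian topologically simple closed normal subgroup. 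Since every automorphism of $K'$ permutes the non-abelian topologically simple closed normal subgroups, $R$ is characteristic; and since $R$ is centreless, $M=C_{K'}(R)$, which is therefore characteristic as well. Finally $K=K'\cdot Z(K)=M\cdot R\cdot Z(K)$ (as $K'=M\times R$), and for $g=mrz$ with $m\in M$, $r\in R$, $z\in Z(K)$, conjugation by $g$ acts on $M$ as conjugation by $m$ (since $r$ commutes with $M$ and $z$ is central) and on $R$ as conjugation by $r$; hence any normal subgroup of $M$ (or of $R$) is invariant under all of $K$, i.e.\ normal in $K$. This proves (d$_1$)$\to$(d$_3$), closing the cycle.

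The step I expect to be the main obstacle is Step~3: the algebraic decomposition $K'\cong M\times R$ is easy, but making it \emph{canonical} (both factors characteristic) requires the intrinsic characterization of $R$ via non-abelian topologically simple closed normal subgroups, Lemma~\ref{simple:subgroups:coincides:with:the:factor}, and the verification that the Lie factor $M$ contains no such subgroup --- and it is precisely here that the hypothesis ``$Z(K)$ is a Lie group'' is used in an essential way, via the chain $Z(K)$ Lie $\Rightarrow$ $Z(K')$ finite $\Rightarrow$ $N$ open in $Z$ $\Rightarrow$ $I_0$ finite.
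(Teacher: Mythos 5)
Your proof of item (d) is correct and follows essentially the same route as the paper's: the finiteness of $Z(K')$ forces $N$ to be open in $Z$ (the paper sees this via Pontryagin duality, you via profiniteness of $Z=\prod_{i\in I}Z_i$ --- equivalent arguments), yielding the same splitting $K'\cong M\times R$ with the same finite $I_0$, and the characteristicness of both factors rests on the same key claim, namely that $M$ admits no non-abelian (topologically) simple closed normal subgroup because each of its quasi-simple factors inherits the non-trivial centre $Z_i/(N_0\cap\widetilde L_i)$. The one step you compress --- the ``short argument with identity components'' --- is exactly the paper's Claim~\ref{no:special:subgroups}: for such an $S$ the finite-kernel map $\varphi:M\to\prod_{i\in I_0}L_i$ gives $\varphi(S)=L_k$, hence $S=c(Z(M)S)=M_k$, contradicting $Z(M_k)\neq\{e\}$; your intrinsic description of $R$ and the identification $M=C_{K'}(R)$ are a slightly slicker packaging of the paper's direct automorphism argument.
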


\begin{proof} We provide only the proof of item (d). We assume from now on that $Z(K)$ is a Lie group. The equivalence (d$_1$) $\leftrightarrow$ (d$_2$) follows from (b) and the obvious fact that $\prod_{i\in I}L_i$ is a Lie group precisely when $I$ is finite.  The implication (d$_3$) $\to$ (d$_2$) is obvious, so we are left with the proof of the remaining  implication (d$_2$) $\rightarrow$ (d$_3$). Hence, we assume from now on that the set $I$ is infinite.

As a closed subgroup of the compact Lie group $Z(K)$, $Z(K')$ is a compact Lie group as well. Since $Z(K')$ is zero-dimensional by (a),  it is finite. Since $Z/N \cong Z(K')$ by (c), the subgroup $N$ of the compact abelian group $Z$ has finite index in $Z$. Taking the duals and using the fact that  $\widehat Z_i$ is finite (as each $Z_i$ is finite by (c)), we deduce that the annihilator $N^\bot=\{\chi\in\widehat{Z}: \chi(N)\subseteq \{0\}\}$  of $N$ is a finite subgroup  of $\widehat Z \cong \bigoplus_{i\in I}\widehat   Z_i$. Hence there exists a finite subset $I_0$ of $I$ such that  $N^\bot \subseteq \bigoplus_ {i\in I_0} \widehat  Z_i$.  Taking the duals once again, we conclude that $N$  contains the subgroup 
\begin{equation}
\label{eq:N_1}
 N_1 := \prod_{i\in I\setminus I_0}Z_i =\left(\bigoplus_{i\in I_0} \widehat  Z_i\right)^\bot
\end{equation}
 of $Z$. 
Since $N\subseteq Z=\prod_{i\in I_0}Z_i \times \prod_{i\in I\setminus I_0}Z_i
=
\left(\prod_{i\in I_0}Z_i\right)\times N_1$
and $N_1\subseteq N$, there exists a subgroup $N_0$ of  $\prod_{i\in I_0}Z_i$
such that 
$N = N_0\times N_1$.

Let $J=\{i\in I_0: Z_i\subseteq N\}$ and $I_1=I_0\setminus J$.
Since $J$ is finite, 
$N_0=A\times \prod_{i\in J} Z_i$ for some subgroup $A$ of $\prod_{i\in I_1} Z_i$. Now 
$$
N=\left(A\times \prod_{i\in J} Z_i\right)\times N_1
=
A\times \prod_{i\in I_1\cup J} Z_i
=
A\times \prod_{i\in I\setminus I_1} Z_i.
$$
by \eqref{eq:N_1}.
By our construction, $A$ does not contain any subgroup $Z_i$ for $i\in I_0\setminus J=I_1$. This argument allows us to assume, without loss of generality,
that 
$N_0$ contains no $Z_i$ for $i\in I_0$.
This additional assumption means that 
\begin{equation}\label{Z:Z}
N_0 \cap Z_i \ne Z_i\  \mbox{ for all }\ i \in I_0.
\end{equation}

Define
\begin{equation}
\label{M:and:R:tildas}
\widetilde M = \prod_{i\in I_0}\widetilde L_i
\ \ 
\mbox { and }
\ \ 
\widetilde R=\prod_{i\in I\setminus I_0}\widetilde L_i,
\end{equation}
so that $L=\prod_{i\in I} \widetilde L_i=\widetilde M\times \widetilde R$.
Since 
$N_0$ is a subgroup of $\widetilde M$,
$N_1$ is a subgroup of $\widetilde R$
and 
$N=N_0\times N_1$,
it follows that 
$L/N=\widetilde M/N_0\times \widetilde R/N_1$.
Let $q: L/N \to K'$ be the isomorphism provided by (c).
Then
$K'=q(L/N)=q(\widetilde M/N_0\times \widetilde R/N_1)
=
q(\widetilde M/N_0)\times q(\widetilde R/N_1)
=
M\times R$, 
where
$$
M=q(\widetilde M/N_0)
\ \ 
\mbox{ and }
\ \ 
R=q(\widetilde R/N_1).
$$

Since $N_1= \prod_{i\in I\setminus I_0}Z_i$,
we get
$$
R
\cong
\widetilde R/N_1=\left(\prod_{i\in I\setminus I_0}\widetilde L_i\right)
/
\left(\prod_{i\in I\setminus I_0}Z_i\right)
=
\prod_{i\in I\setminus I_0}\widetilde L_i/Z_i
=
\prod_{i\in I\setminus I_0}L_i.
$$

Since $I_0$ is finite and all $\widetilde L_i$ are Lie groups,
from 
\eqref{M:and:R:tildas}
we deduce that
$\widetilde M$ is a Lie group. 
Since $N_0$ is a finite subgroup of the compact group $\widetilde M$, 
the quotient $\widetilde M/N_0$ and 
$\widetilde M$ are locally homeomorphic. Therefore,
$\widetilde M/N_0$ is a Lie group.
From 
$M\cong  \widetilde M/N_0$, we deduce that $M$ is a Lie group as well.

Observe that $Z(\widetilde M) = \prod_{i\in I_0}  Z_i$ contains $N_0$. Moreover, $Z(\widetilde M/N_0) = Z(\widetilde M)/N_0$, so 
$$
(\widetilde M/N_0)/Z(\widetilde M/N_0) \cong (\widetilde M/N_0)/(Z(\widetilde M)/N_0) \cong \widetilde M/Z(\widetilde M) \cong \prod_{i\in I_0}  L_i.
$$

From this and the fact that $q\restriction_{\widetilde M/N_0}: \widetilde M/N_0 \to M$ is an isomorphism, we get
\begin{equation}\label{M:ZM}
M/Z(M) \cong  \widetilde M/Z(\widetilde M) \cong \prod_{i\in I_0}  L_i.
\end{equation}
Let $\psi: M/Z(M)\to \prod_{i\in I_0}  L_i$ be the isomorphism
witnessing \eqref{M:ZM}.
Let $\xi: M \to M/Z(M)$ be the canonical homomorphism, and let 
$\varphi= \psi\circ\xi: M\to \prod_{i\in I_0}  L_i$.
Clearly, $\ker \varphi = \ker \xi = Z(M)$,
so $\varphi$ has finite kernel. From this it easily follows that 
\begin{equation}
\label{preservation:with:finite:images}
S\cong \varphi(S)
\mbox{ for every infinite simple closed subgroup }
S
\mbox{ of }
M.
\end{equation}

\begin{claim}
\label{no:special:subgroups}
$M$ contains no non-abelian simple connected closed normal subgroups.
\end{claim}
\begin{proof}
For every $i\in I_0$ denote by $M_i$ the image of the subgroup $\widetilde L_i$ under the composition of the canonical map $\widetilde M \to \widetilde M/N_0$
and the   isomorphism $q\restriction_{\widetilde M/N_0}: \widetilde M/N_0 \to M$. Then $M_i$ is a normal connected Lie subgroup of $M$
and 
$\varphi(M_i)=L_i$.

Let $S$ be a non-abelian simple connected closed normal subgroup of $M$.
Clearly, $S$ is infinite.
Since $\varphi$ is a continuous homomorphism and $M$ is compact,
$\varphi(S)$ is a non-trivial closed normal subgroup of  $ \prod_{i\in I_0} L_i$.
Since $S$ is infinite,
$\varphi(S) \cong S$
by \eqref{preservation:with:finite:images}.
Since $S$ is simple, so is $\varphi(S)$.
Applying Lemma \ref{simple:subgroups:coincides:with:the:factor}
to $M=\{0\}$, $I=I_0$ and $N=\varphi(S)$,
we conclude that
$\varphi(S) = L_k$
for some $k \in I_0$. Since also $\varphi(M_k) = L_k$, we deduce that 
$$
Z(M) S = \varphi^{-1}(\varphi(S))=\varphi^{-1}(L_k)= Z(M)  M_k.
$$
 Since $Z(M)$ is finite, both $S$ and $M_k$ are finite-index subgroups of the group $H = Z(M) S=  Z(M)  M_k$.
Since 
both $S$ and $M_k$ are connected,
$S=c(H)=M_k$.
Since $S$ is simple, it is center-free.
To get a contradiction, it suffices to show that 
$M_k$ has a non-trivial center.
Since $N_0\subseteq \prod_{i\in I_0} Z_i=\prod_{i\in I_0} Z(\widetilde L_i)$,
we have $N_0\cap \widetilde L_k=N_0\cap Z_k$.
Combining this with \eqref{Z:Z}, we get $N_0\cap \widetilde L_k\not=Z_k$.
Therefore,
$Z(M_k)$ contains a subgroup isomorphic to the non-trivial 
subgroup $Z_k/(N_0 \cap \widetilde L_k)$. 
\end{proof}

\begin{claim}
Both $M$ and $R$ are characteristic subgroups of $K'$. 
\end{claim}
\begin{proof}
To
prove that $M$ is characteristic in $K'$, 
suppose the contrary.
Then there exists a continuous automorphism $a$ of $K'$
such that $M\setminus a(M)\not=\emptyset$.
Then there exists $k\in I\setminus I_0$ such that $p_k(a(M))\not=\{e_{L_k}\}$.
Clearly,
$a(M)$ is a closed 
normal subgroup of $K' = M \times R$.
Applying Lemma \ref{dichotomy:for:product:of:two} to 
$H=M\times\prod_{i\in I\setminus (I_0\cup\{k\})} L_i$, $S=L_k$ and $N=a(M)$,
we get
$L_k\subseteq a(M)$, which in turn implies 
$a^{-1}(L_k)\subseteq M$.
Since $a^{-1}$ is a continuous automorphism of $K'$ 
and $L_k$ is a closed normal subgroup of $K'$,  
$a^{-1}(L_k)$ 
is a closed normal subgroup of $K'$.
Furthermore, $a^{-1}(L_k)\cong L_k$, so  
$a^{-1}(L_k)$ is a non-abelian simple connected group.
This contradicts Claim
\ref{no:special:subgroups}.
This shows that $a(M)\subseteq M$ for every continuous automorphism 
$a$ of $K'$; that is, $M$ is characteristic in $K'$.
 
Next, let us prove that $R$ is a 
characteristic subgroup of $K'$. Let  $a$ be an arbitrary continuous automorphism of $K'$. 
Fix $i\in I\setminus I_0$ arbitrarily. The same argument as above shows that $a(L_i)$ 
cannot be contained entirely in $M$.
By Lemma \ref{simple:subgroups:coincides:with:the:factor},
$a(L_i)=L_k$ for some $k\in I\setminus I_0$. 
In particular, $a(L_i)\subseteq R$. 
Since $i$ was chosen arbitrarily,
$a(\bigoplus_{i\in I\setminus I_0} L_i)\subseteq R$.
Since $R$ carries the Tychonoff product topology,
$\bigoplus_{i\in I\setminus I_0} L_i$ is dense in $R$.
Since $a$ is continuous,
it follows that
$a(R)=a(\overline{\bigoplus_{i\in I\setminus I_0} L_i})
\subseteq 
\overline{R}=R$, where the bar denotes the closure in $K'$.
\end{proof}

Since 
both $M$ and $R$ are characteristic subgroups of $K'$,
all normal subgroups of $M$ and $R$ are normal subgroups of $K'$. Since $K = K'\cdot Z(K)$ by (a), this yields that all normal subgroups of $M$ and $R$ are also normal subgroups of $K$. 
\end{proof}

The next theorem describes the structure of almost connected compact groups with Lie center.
\begin{theorem}\label{almost:connected}
Let $K$ be an  almost connected compact group
such that 
$Z(K)$ is a Lie group
yet
$K$ itself is not a Lie group.
Then 
$c(K)'$ is topologically isomorphic to an infinite product of non-trivial closed normal Lie subgroups of $K$.
\end{theorem}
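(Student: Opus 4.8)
The plan is to reduce Theorem~\ref{almost:connected} to the connected case and then apply Theorem~\ref{compact:connected:group:with:Lie:center}(d). First I would observe that since $K$ is almost connected, $c(K)$ is an open (hence closed) finite-index subgroup of $K$, so $c(K)$ is itself a compact connected group; moreover $Z(c(K))$ is a closed subgroup of a suitable open neighbourhood structure and in fact $Z(c(K)) \supseteq Z(K)\cap c(K)$, which is Lie, and I would need the slightly stronger fact that $Z(c(K))$ is itself a Lie group. This follows because $c(K)$ is open in $K$, so $c(K)$ and $K$ are locally isomorphic, and a closed subgroup of a compact group $K$ whose center is Lie need not immediately give that $Z(c(K))$ is Lie — here I would argue via $Z(c(K))$ being almost connected with $Z(c(K))^\circ \subseteq c(Z(K))$ being a torus of finite dimension, plus $Z(c(K))/Z(c(K))^\circ$ being profinite; the key point is that if $c(K)$ were to have non-Lie center while $K$ has Lie center one derives a contradiction from $[K:c(K)]<\infty$. (If this local-to-global step is cleaner to state as ``$c(K)$ has Lie center because $c(K)$ is open in $K$ and $Z(K)$ is Lie'', I would spell out that an open subgroup of finite index shares the local structure, and centers of locally isomorphic compact groups have the same identity component dimension, while the totally disconnected part of $Z(c(K))$ must be finite lest $Z(K)$ fail to be Lie.)

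Next I would apply Theorem~\ref{compact:connected:group:with:Lie:center}(d) to the compact connected group $c(K)$: since $K$ is not a Lie group and $c(K)$ is open of finite index in $K$, the group $c(K)$ is not a Lie group either. Hence condition (d$_1$) holds for $c(K)$, so (d$_3$) holds: writing $K/Z(K)=\prod_{i\in I}L_i$ as in (b) applied to $c(K)$ (so with the index set $I$ for $c(K)$), there is a finite $I_0\subseteq I$, a closed characteristic Lie subgroup $M$ of $c(K)'$, and a closed characteristic subgroup $R\cong\prod_{i\in I\setminus I_0}L_i$ of $c(K)'$ with $c(K)'\cong M\times R$. Crucially, part (d$_3$) also asserts that all normal subgroups of $M$ and of $R$ are normal in $c(K)$ — but I need them normal in $K$, not merely in $c(K)$. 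For this I would use that $M$ and $R$ are \emph{characteristic} in $c(K)'$, and $c(K)'$ is characteristic in $c(K)$ (being the derived subgroup), and $c(K)=c(K)^\circ$ is characteristic in $K$ (it is the connected component); conjugation by any element of $K$ is a continuous automorphism of $K$ fixing $c(K)$ setwise, hence fixing $c(K)'$ setwise, hence — since $M,R$ are characteristic in $c(K)'$ — fixing $M$ and $R$ setwise. Therefore every normal subgroup of $M$ (resp.\ $R$) is normal in $K$.

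Now I would produce the desired infinite-product decomposition of $c(K)'$. Since $R\cong\prod_{i\in I\setminus I_0}L_i$ with $I$ infinite and $I_0$ finite, the set $I\setminus I_0$ is infinite; each factor $L_i$ is a non-trivial (simple compact connected Lie) closed normal subgroup of $R$, hence — by the characteristic-in-$c(K)'$, characteristic-in-$c(K)$, characteristic-in-$K$ chain above applied to each simple factor, or more directly because $L_i$ is normal in $R$ which is normal in $K$ via the preceding paragraph, together with the fact that the $L_i$ are the unique minimal non-abelian simple normal subgroups so are permuted among themselves, so each $\{L_i\}$-orbit is finite and one can group them — a closed normal Lie subgroup of $K$. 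Hmm, a single $L_i$ need not be normal in $K$ if conjugation permutes the factors; to be safe I would instead note: conjugation by $K$ permutes the simple factors $\{L_i : i\in I\setminus I_0\}$ of $R$, and by the almost-connectedness hypothesis (or directly because $[K:c(K)]<\infty$ and the action of $K/c(K)$ on $R$ is by automorphisms) each orbit is finite; applying Lemma~\ref{dynamical} to the action of $K$ on $R=\prod_{i\in I\setminus I_0}L_i$ gives a partition $I\setminus I_0=\bigcup_{j\in J}I_j$ into finite pieces with each $N_j=\prod_{i\in I_j}L_i$ a $K$-invariant (hence closed normal) finite product of simple Lie groups, thus a closed normal Lie subgroup of $K$, and $J$ is infinite since $I\setminus I_0$ is infinite and each $I_j$ is finite. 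Then $c(K)'\cong M\times R\cong M\times\prod_{j\in J}N_j$; absorbing the finite Lie factor $M$ into one of the $N_j$ (still a closed normal Lie subgroup of $K$) exhibits $c(K)'$ as an infinite product of non-trivial closed normal Lie subgroups of $K$, as required.

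\textbf{Main obstacle.} The delicate point is the ``normal in $K$'' upgrade: Theorem~\ref{compact:connected:group:with:Lie:center}(d$_3$) only gives normality in $c(K)$, and a single simple factor $L_i$ typically is \emph{not} normal in $K$ because $K/c(K)$ may permute the factors nontrivially. The fix — invoking Lemma~\ref{dynamical} to group the factors into finite $K$-orbits (using that $[K:c(K)]$ is finite, so all orbits are finite) — is the conceptual heart of the argument, and one must check carefully that the finiteness of $I_0$ plus infiniteness of $I$ really does force infinitely many blocks $N_j$, and that grouping a finite set of simple compact connected Lie groups yields a Lie group (clear) which remains normal in $K$ (that is exactly $K$-invariance from Lemma~\ref{dynamical}). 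The secondary technical point is justifying that $Z(c(K))$ is Lie given only that $Z(K)$ is Lie, which I would handle via the open finite-index subgroup $c(K)$ sharing the local structure of $K$.
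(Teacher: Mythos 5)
Your proposal follows the paper's proof of Theorem \ref{almost:connected} essentially step for step: pass to the open finite-index connected subgroup $c(K)$, which is then also not a Lie group; apply the implication (d$_1$)$\to$(d$_3$) of Theorem \ref{compact:connected:group:with:Lie:center} to $c(K)$ to write $c(K)'\cong M\times R$ with $M$ a characteristic Lie subgroup and $R$ an infinite product of simple compact connected Lie factors; and then repair the failure of normality in $K$ by letting $K$ act on $R$ through conjugation and invoking Lemma \ref{dynamical} to regroup the simple factors into finite $K$-invariant blocks $N_j$ (the orbits being finite because each $L_i$ is normal in $c(K)$, so the induced permutation of the factors factors through the finite group $K/c(K)$). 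This is exactly the paper's argument, and you correctly identify the normality upgrade via Lemma \ref{dynamical} as its crux; the final absorption of $M$ into one of the blocks is harmless (note only that $M$ is a compact Lie group, not necessarily finite).

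The one place where you diverge is the justification that $Z(c(K))$ is a Lie group, which is needed before Theorem \ref{compact:connected:group:with:Lie:center}(d) can be applied to $c(K)$, and this step is a genuine gap as you have written it. Your argument ultimately rests on the unproved claim that ``if $c(K)$ had non-Lie center while $K$ has Lie center one derives a contradiction from $[K:c(K)]<\infty$'', but no such contradiction exists in general: for $K=SU(3)^{\N}\rtimes \Z/2\Z$, with the generator acting coordinatewise by complex conjugation (an outer automorphism of $SU(3)$ inverting its center), $K$ is compact, almost connected and not Lie, $Z(K)$ is trivial (hence Lie), yet $Z(c(K))=(\Z/3\Z)^{\N}$ is not a Lie group. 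To be fair, the paper's own proof handles this step with an equally problematic one-liner --- it asserts that $Z(c(K))$ is a closed subgroup of $Z(K)$, which fails for the same example, since centrality in $c(K)$ does not imply commuting with elements outside $c(K)$ --- so this is a defect you share with the source rather than a flaw peculiar to your route. (In the example above the conclusion of the theorem still holds, each coordinate copy of $SU(3)$ being normal in $K$, so the statement itself is not refuted; but the passage from ``$Z(K)$ Lie'' to ``$Z(c(K))$ Lie'' needs a genuine argument or a workaround in both proofs.)
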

 
\begin{proof} 
For the sake of brevity, we let $K_0=c(K)$. 
Since $K$ is almost connected, $K_0$ is an open subgroup if $K$. Since $K$ is compact, $K_0$ has finite index in $K$. In particular, 
$K_0$ is a Lie group if and only if $K$ is.  Since $K$ is not a Lie group by our assumption, we conclude that $K_0$ is also not a Lie group. 
Being a closed subgroup of the Lie group $Z(K)$,  $Z(K_0)$ is a Lie group. 

Applying  the implication (d$_1$)$\to$(d$_3$) of Theorem \ref{compact:connected:group:with:Lie:center}(d) to $K_0$, we
find an infinite set $I$,
a family $\{L_i: i\in I\}$ of 
non-trivial closed 
normal Lie subgroups of
$K_0$,
a closed 
characteristic Lie subgroup $M$ of $K_0'$ and a (closed)
characteristic subgroup $R \cong \prod_{i\in I}L_i$ of $K_0'$
such that $K_0' \cong M \times R$.
(To simplify notations, we renamed the set $I\setminus I_0$ in Theorem \ref{compact:connected:group:with:Lie:center}(d) to $I$.)
While all $L_i$ are normal subgroups of
$K_0'$, they need not in general be normal in the bigger group $K$.
In the remaining part of the proof we shall find a remedy to this problem
by making use of Lemma \ref{dynamical}.
To this end, we need to define an appropriate group action of some group $G$ 
on $R$ via continuous
automorphisms satisfying the hypotheses of that lemma. 

Since the subgroup $K_0$ of $K$ is normal, the group $K$ acts on $K_0$ via conjugation. Denote by $G$ the group of (continuous) automorphisms of $K_0$ obtained in this way. 
Since 
$K'_0$ is a fully invariant subgroup of $K$, 
it is also $G$-invariant.
This determines an action of $G$ on $K_0'$ by restriction. (Note that 
while the action of $K$ on $K_0$ is via {\em internal\/} automorphisms, the action of $G$ on $K_0'$ is via automorphisms that need not be internal.) 
Since $M$ and $R$ are characteristic subgroups of $K_0'$, they are both $G$-invariant. In particular, $G$ acts also on $R$ by restriction.

In order to apply Lemma \ref{dynamical}, we need to check that 
$\{g(L_i):g\in G\}$ is finite 
for every 
$i\in I$. 
Let $N$ be an arbitrary normal subgroup of $K_0'$ (for example, any $L_i$).
Since $K_0$ has finite index in $K$, there exists a finite set $E\subseteq K$ such that $K=\bigcup\{xK_0:x\in E\}$.
Since $K_0=K_0'\cdot Z(K_0)$
by Theorem \ref{compact:connected:group:with:Lie:center}(i), 
we get $\{g(N):g\in G\}\subseteq \{x^{-1}N x:x\in E\}$.
In particular, $|\{g(N):g\in G\}|\le |E|$. 

Let $I=\bigcup\{I_j:j\in J\}$ be the partition as in the conclusion of Lemma \ref{dynamical}. Fix $j\in J$.
Since  all $L_i$ are Lie groups, their finite product $N_j=\prod_{i\in I_j} L_i$ is a Lie group. Since $N_j$ is $G$-invariant, it is a normal subgroup of $K$ by our choice of $G$. 
Since $M$ is characteristic in $K_0'$, it is $G$-invariant, and so $M$ is also a normal subgroup of $K$.
Finally, note that 
$$
c(K)'=K_0'=\cong M\times R\cong M\times\prod_{i\in I} L_i\cong M\times \prod_{j\in J} \left(\prod_{i\in I_j} L_i\right)=M\times \prod_{j\in J} N_j
$$
is the desired decomposition.
\end{proof}

\section{Proof of Theorem \ref{thm:AB}}
\label{O-bou}

Our first lemma in this section is a particular case of a much stronger result from \cite{DP2}. However, since this weaker version has a proof that is much simpler than the proof from \cite{DP2}, and since the manuscript \cite{DP2} is not easily accessible, we provide here this simpler direct proof for the reader's convenience.

\begin{lemma}
\label{exotic:tori:lemma}
If $G$ is a compact abelian group satisfying 
\Zcm, then $G\cong F\times \T^n$, where $F\cong t(\widehat{G})$ is a finite group and $n=\dim G$ is a non-negative integer number; in particular, $G$ is a Lie group.
\end{lemma}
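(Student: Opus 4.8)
The plan is to use Pontryagin duality to translate the problem into the language of discrete abelian groups. Let $\widehat{G}$ be the Pontryagin dual of $G$; since $G$ is compact abelian, $\widehat{G}$ is discrete. I would first show that the hypothesis \Zcm\ forces $\widehat{G}$ to have the following structure: its torsion subgroup $t(\widehat{G})$ is finite, and $\widehat{G}/t(\widehat{G})$ is a finitely generated torsion-free group, so $\widehat{G}/t(\widehat{G}) \cong \Z^n$ for some $n$, and moreover the extension splits so that $\widehat{G}\cong t(\widehat{G})\oplus\Z^n$. Dualizing back will then give $G\cong F\times\T^n$ with $F = \widehat{t(\widehat{G})}$ finite (note $F\cong t(\widehat{G})$ for finite groups), and $\dim G = n$, whence $G$ is a Lie group.

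For the torsion part, the key point is that if $p$ is a prime and $\widehat{G}$ contains infinitely many elements of order $p$ (or, more to the point, if $t(\widehat{G})$ is infinite), then one can find a quotient of $\widehat{G}$ that is an infinite torsion group, which dualizes to an infinite closed zero-dimensional subgroup of $G$. More carefully: an infinite torsion discrete abelian group $T$ that is a quotient of $\widehat G$ corresponds to an infinite closed subgroup $T^\perp$-complement, i.e. a closed subgroup of $G$ that is an infinite profinite (hence compact zero-dimensional) group; by Fact \ref{compact:metrizable:subgroup} it then contains an infinite compact metrizable zero-dimensional subgroup, contradicting \Zcm. So $t(\widehat{G})$ must be finite. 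Similarly, if the torsion-free rank of $\widehat{G}$ were infinite, or if $\widehat{G}/t(\widehat{G})$ were not finitely generated, one could produce an infinite quotient that is either torsion or admits an infinite torsion quotient — e.g. using that a torsion-free group of infinite rank, or one that is not finitely generated, has $\Z^{(\omega)}$ or $\mathbb{Q}$ or $\Z(p^\infty)$-related quotients whose duals are non-discrete zero-dimensional compact subgroups. The cleanest route is: $G$ has finite dimension (this should follow because infinite dimension would again yield such subgroups, or one can cite that \Zcm\ groups among compact abelian groups are finite-dimensional), say $\dim G = n$; then $\widehat{G}/t(\widehat{G})$ is torsion-free of rank $n$, and I must upgrade "rank $n$ torsion-free" to "$\cong \Z^n$", which holds once we know $\widehat G$ has no quotient isomorphic to $\mathbb{Q}$ or to $\Z(p^\infty)$ — again these would dualize to the $p$-adic solenoid or $\Z_p$, which are infinite compact metrizable zero-dimensional (for $\Z_p$) or contain such subgroups.

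Then I would assemble the pieces: $\widehat{G}$ is an extension of $\Z^n$ by the finite group $t(\widehat{G})$; since $\Z^n$ is free, the extension splits, giving $\widehat{G}\cong t(\widehat{G})\oplus\Z^n$. Dualizing, $G\cong \widehat{t(\widehat{G})}\times\T^n$, and $F:=\widehat{t(\widehat{G})}$ is finite with $F\cong t(\widehat{G})$ (a finite abelian group is isomorphic to its dual). Finally $\dim\T^n = n$ and $\dim F = 0$ give $\dim G = n$, and $\T^n\times F$ is manifestly a Lie group.

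The main obstacle I anticipate is the bookkeeping to guarantee that \Zcm\ — which only forbids \emph{finite} compact metrizable zero-dimensional subgroups from being infinite — is strong enough to rule out \emph{all} the bad cases ($\Z_p$ summands, $\mathbb{Q}$ quotients, infinite torsion quotients, infinite rank). The crucial leverage is Fact \ref{compact:metrizable:subgroup}: any infinite compact zero-dimensional subgroup of $G$ automatically contains an infinite compact metrizable zero-dimensional one, so it suffices to exhibit \emph{some} infinite compact zero-dimensional subgroup in each bad case, and these are produced by dualizing appropriate quotients of $\widehat G$ (quotients that are torsion, or are $\mathbb{Q}$, dualize to compact zero-dimensional or solenoidal groups whose identity component is handled by the torsion-free finite-rank count). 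Organizing this so the argument is short rather than a case-by-case slog is the delicate part; invoking that compact abelian groups with \Zcm\ are finite-dimensional at the outset streamlines it considerably.
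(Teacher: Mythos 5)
Your overall strategy is the same as the paper's: dualize, observe that \Zcm\ for the compact group $G$ is exactly the statement that every torsion quotient of the discrete group $X=\widehat{G}$ is finite (a quotient $X/K$ corresponds to the closed subgroup $K^\perp\cong\widehat{X/K}$ of $G$, which is compact zero-dimensional precisely when $X/K$ is torsion, and Fact~\ref{compact:metrizable:subgroup} upgrades ``infinite compact zero-dimensional'' to ``contains an infinite compact metrizable zero-dimensional subgroup''), and then deduce $X\cong t(X)\oplus\Z^n$. The paper does this in one stroke: it picks a maximal independent set of infinite-order elements, so that the free subgroup $F$ it generates has $X/F$ torsion, hence finite; finiteness of $t(X)$, finite generation, and $n<\infty$ all fall out of that single quotient.

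The one step of yours that would fail as stated is the proposed upgrade from ``torsion-free of finite rank $n$'' to ``$\cong\Z^n$'' via the criterion ``no quotient isomorphic to $\Q$ or to $\Z(p^\infty)$.'' That criterion is too weak. Take $A=\sum_p \tfrac{1}{p}\Z\subseteq\Q$ (rationals with squarefree denominator): it is torsion-free of rank $1$, every proper quotient is torsion with all $p$-primary components finite (so no $\Z(p^\infty)$ quotient) and it has no $\Q$ quotient, yet $A\not\cong\Z$; indeed $A/\Z\cong\bigoplus_p\Z(p)$ is an infinite torsion quotient, and dually $\widehat{A}$ is a $1$-dimensional compact connected group containing the infinite zero-dimensional subgroup $\prod_p\Z(p)$. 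So the bad cases are not exhausted by $\Q$- and $\Z(p^\infty)$-quotients, and ruling those out does not give freeness. The repair is already in your hands: you have established that \emph{every} infinite torsion quotient of $\widehat{G}$ is forbidden, so apply that to $\widehat{G}/F$ for $F$ a maximal free subgroup; its finiteness immediately gives that $\widehat{G}$ is finitely generated (once you also know the rank is finite, which follows from the paper's closing observation that $\T^{\kappa}$ with $\kappa$ infinite contains the zero-dimensional subgroup $\T[2]^{\kappa}$). With that substitution your argument closes and coincides with the paper's.
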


\begin{proof} 
Let $X$ denote the discrete Pontryagin dual $\widehat{G}$ of $G$. Let $M$ be a maximal independent  subset of $X$, so that the subgroup $F$ generated by $M$ is free and $X/F$ is torsion. Let $N = F^\perp$ be the annihilator of $F$ in $G$. Then $N$ is a closed subgroup of $G$ with $N\cong \widehat{X/F} $. Since $X/F$ is torsion, the group $N\cong \widehat{X/F} $ is zero-dimensional. By our hypothesis, $N$ must be finite. Therefore, $F$ is a finite-index subgroup of $X$, i.e., $X/F$ is finite.  Let $h: X \to X/F$ be the canonical projection. Since the torsion subgroup $t(X)$ of $X$  trivially intersects $F$, $h\restriction_{t(X)}$ is injective. Since  the group $X/F$ is finite, we conclude that  $t(X)$ is finite as well. By the well-known
 Baer theorem, $X = t(X) \oplus Y$, where $Y$ is a torsion-free subgroup of $X$. Let $m= |h(Y)|\leq |X/F|< \infty$ and let $f: G \to G$ be the homomorphism defined by $f(x) =mx$ for every $x\in G$. Then $f(Y) \leq F$ since $h(Y)\cong Y/Y\cap F$ has exponent $m$ (so $mY\subseteq F$). Moreover, $f\restriction_{Y}$ is injective since $Y$ is torsion-free. Therefore, $Y \cong f(Y)=mY$ is free as (isomorphic to) a subgroup of the free group $F$. Hence $Y\cong \bigoplus_\kappa \Z$, so  $\widehat{Y}\cong \T^\kappa$. Therefore, $G \cong \widehat{X }\cong  t(X)\times  \T^\kappa$.  Suppose that $\kappa\ge\omega$. Since $\T[2]^\kappa$ is a zero-dimensional subgroup of $ \T^\kappa$ and $\T[2]^\kappa$ contains an infinite compact metric (zero-dimensional)  subgroup, it follows that $G$ fails property \Zcm, in contradiction with our assumption. This contradiction shows that $\kappa < \infty$. Clearly, $\kappa=\dim G$.
\end{proof}

\begin{corollary}
\label{locally:compact:abelian:group:without:infinite:totally:disconnected:compact:subgroups}
If $G$ is a locally compact abelian group satisfying \Zcm, then $G \cong \R^m\times \T^n \times D$ for some $m, n\in \N$ and some discrete abelian group $D$; in particular, $G$ is a Lie group.
\end{corollary}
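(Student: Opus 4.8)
The plan is to deduce the corollary from Lemma~\ref{exotic:tori:lemma} together with the classical structure theorem for locally compact abelian groups. By that structure theorem, $G$ is topologically isomorphic to $\R^m\times H$ for some $m\in\N$, where $H$ is a locally compact abelian group that contains a compact open subgroup $K$. Identifying $G$ with $\R^m\times H$, the subgroup $K$ is a closed subgroup of $G$; since property \Zcm\ is trivially inherited by subgroups, $K$ satisfies \Zcm. As $K$ is a compact abelian group, Lemma~\ref{exotic:tori:lemma} applies and gives a topological isomorphism $K\cong F\times\T^n$, where $F$ is finite and $n=\dim K\in\N$.

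Next I would identify the connected component of $H$. Since $K$ is open (hence clopen) in $H$ and contains $0$, we have $c(H)\subseteq K$, and in fact $c(H)=c(K)$ because $K$ carries the subspace topology from $H$; under the isomorphism above $c(K)$ corresponds to $\{0\}\times\T^n$, which has finite index in $K$ and is closed (being compact), hence open in $K$ and therefore open in $H$. Consequently $D:=H/c(H)$ is a discrete abelian group.

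It remains to split the extension $0\to\T^n\to H\to D\to 0$ topologically, where I write $\T^n$ for $c(H)$. Since $\T^n$ is a divisible abelian group, it is injective as an abstract abelian group, so there is a group homomorphism $s\colon D\to H$ with $\pi\circ s=\mathrm{id}_D$, $\pi\colon H\to D$ being the quotient map. Because $D$ is discrete, $s$ is automatically continuous, and therefore $(x,d)\mapsto x+s(d)$ is a continuous bijective homomorphism $\varphi\colon\T^n\times D\to H$. As $\varphi$ maps each basic open set $V\times\{d\}$ (with $V$ open in $\T^n$, and $\T^n$ open in $H$) onto the open set $V+s(d)$, it is an open map, hence a topological isomorphism. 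Thus $H\cong\T^n\times D$ and $G\cong\R^m\times\T^n\times D$. Finally, $\R^m\times\T^n$ is a Lie group, $D$ is discrete and hence a (zero-dimensional) Lie group, and a product of two Lie groups is a Lie group; so $G$ is a Lie group.

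There is no serious obstacle here: the corollary is essentially a repackaging of Lemma~\ref{exotic:tori:lemma} through locally compact abelian group theory, and the only points requiring a little care are the inheritance of \Zcm\ by $K$, the matching of the $\T^n$ factor with $c(H)$, and the verification that the algebraic splitting is in fact topological (which is immediate once one observes that $c(H)$ is open in $H$). If one prefers to bypass the explicit splitting, one may instead note that $K\cong F\times\T^n$ is a Lie group, so $H$ has an open Lie subgroup and is therefore a Lie group, whence $G\cong\R^m\times H$ is a Lie group, and then invoke the known description of abelian Lie groups as $\R^m\times\T^n\times D$.
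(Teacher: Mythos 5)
Your proposal is correct and follows essentially the same route as the paper: decompose $G\cong\R^m\times H$ with $H$ having a compact open subgroup $K$, apply Lemma~\ref{exotic:tori:lemma} to get $K\cong\T^n\times F$, and use divisibility of $\T^n$ (which is open in $H$) to split off a discrete complement $D$. You merely spell out in more detail the splitting step that the paper dispatches in one sentence.
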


\begin{proof} It follows from the structure theory of locally compact abelian groups that $G = \R^n \times G_0$, where $G_0$ has an open
compact subgroup $K$. From the assumption of our corollary and Lemma \ref{exotic:tori:lemma} it follows that $K \cong \T^n
\times F$ for some $n\in \N$ and some finite abelian group $F$. Thus $\T^n$ is an open subgroup of $G_0$. Since $\T^n$ is divisible, one
can write $ G _0 = \T^n \times D$, for an appropriate subgroup $D$ of $G_0$. By the openness of $T^n$ in the open subgroup $K$ of
$G_0$, we deduce that $D$ is a discrete subgroup of $G_0$. Therefore, $G \cong \R^m\times \T^n \times D$ for some $m$.
\end{proof}

In order to extend Corollary \ref{locally:compact:abelian:group:without:infinite:totally:disconnected:compact:subgroups} to arbitrary (not necessarily abelian) compact groups,
we apply Theorem \ref{compact:connected:group:with:Lie:center}(d). 

\begin{lemma}
\label{compact:connected:tds:is:Lie}
\label{compact:Zc:are:Lie}
A compact group $G$ satisfying \Zcm\ is a Lie group.
\end{lemma}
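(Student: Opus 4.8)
The plan is to prove that $c(G)$ is a Lie group and that $G/c(G)$ is finite; since $c(G)$ is closed in $G$, finiteness of $G/c(G)$ makes $c(G)$ an open Lie subgroup of $G$, whence $G$ is a Lie group (e.g.\ by Glushkov's criterion \cite{G}, since an open Lie subgroup forces the NSS property). Throughout I would use that \Zcm\ is inherited by all (closed) subgroups of $G$.

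For the first step I would look at the centre of $c(G)$. Since $Z(c(G))$ is a closed, hence compact, abelian subgroup of $G$, it satisfies \Zcm, so Lemma \ref{exotic:tori:lemma} makes it a Lie group; consequently Theorem \ref{compact:connected:group:with:Lie:center}(d) applies to $K=c(G)$. If $c(G)$ were not a Lie group, the implication (d$_1$)$\to$(d$_3$) would give a topological isomorphism $c(G)'\cong M\times R$ with $R\cong\prod_{i\in I'}L_i$ for an infinite set $I'$ and non-trivial simple compact connected Lie groups $L_i$. I would then fix a countable subset $\{i_n:n\in\N\}\subseteq I'$, pick a non-trivial finite subgroup $F_n\le L_{i_n}$ for each $n$ (every non-trivial compact connected Lie group contains, say, an involution lying in a non-trivial subtorus), and observe that the subgroup $\prod_{n\in\N}F_n$ of $R$ is an infinite compact metrizable zero-dimensional closed subgroup of $R\le c(G)'\le G$, contradicting \Zcm. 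Hence $c(G)$ is a Lie group.

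For the second step, suppose $G/c(G)$ is infinite. Being infinite profinite, it contains an infinite compact metrizable profinite subgroup $Q$ by Fact \ref{compact:metrizable:subgroup}; set $H=\pi^{-1}(Q)$ for $\pi\colon G\to G/c(G)$. Then $H$ is compact with $c(H)=c(G)$ (the $\pi$-image of the connected set $c(H)$ is trivial, so $c(H)\subseteq c(G)\subseteq c(H)$), so $c(H)$ is a Lie group while $H/c(H)\cong Q$ is infinite; moreover $c(H)$ and $H/c(H)$ are metrizable, hence $H$ is metrizable. I would then write $H=\varprojlim_{n\in\N}H/M_n$ with a decreasing sequence of closed normal subgroups $M_n\trianglelefteq H$, $\bigcap_nM_n=\{e\}$, each $H/M_n$ a compact Lie group; since $H$ is not a Lie group the chain does not stabilize, so after passing to a subsequence $M_0\supsetneq M_1\supsetneq\cdots$ with $\bigcap_nM_n=\{e\}$ still. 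Now each $c(M_n)$ is a closed connected subgroup of $c(H)=c(G)$, and a decreasing chain of closed subgroups of a compact Lie group stabilizes (first the dimensions, then the identity components, then the finite component groups), say $c(M_n)=C$ for $n\ge N$; but $C=\bigcap_{n\ge N}c(M_n)\subseteq\bigcap_{n\ge N}M_n=\{e\}$, so $M_N$ is totally disconnected, i.e.\ an infinite compact metrizable zero-dimensional subgroup of $G$, contradicting \Zcm. Hence $G/c(G)$ is finite, and the argument is complete.

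I expect the second step to be the real obstacle. A compact group need not split as $c(G)$ times a totally disconnected subgroup --- already in the abelian case this fails whenever the torsion subgroup of $\widehat{G}$ is not a direct summand --- so one cannot simply manufacture a totally disconnected complement of $c(G)$ and must instead extract an infinite totally disconnected subgroup by other means. Passing to the metrizable subgroup $H$ to obtain a genuine countable tower, and then exploiting the descending chain condition for closed subgroups inside the compact Lie group $c(G)$, is the manoeuvre I would rely on to circumvent this. Step 1, by contrast, is essentially a direct invocation of Theorem \ref{compact:connected:group:with:Lie:center}(d) and Lemma \ref{exotic:tori:lemma}.
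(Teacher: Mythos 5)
Your proof is correct, and while your first step is essentially the paper's, your second step takes a genuinely different route. Step 1 (connected case) matches the paper exactly: $Z(c(G))$ is a compact abelian group with \Zcm, hence Lie by Lemma \ref{exotic:tori:lemma}; Theorem \ref{compact:connected:group:with:Lie:center}(d) then produces an infinite product of non-trivial compact connected Lie groups inside $c(G)'$, and an infinite product of non-trivial finite subgroups of the factors violates \Zcm\ (your restriction to a countable subproduct to guarantee metrizability is a minor refinement; the paper instead relies on the equivalence of \Zcm\ with \Zc\ via Fact \ref{compact:metrizable:subgroup}). For step 2 the paper simply invokes Lee's supplement theorem \cite[Theorem 9.41]{HM}: there is a compact zero-dimensional subgroup $N$ with $G=c(G)\cdot N$, and \Zcm\ forces $N$ to be finite, so $c(G)$ is clopen and $G$ is Lie --- three lines. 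You correctly note that $G$ need not \emph{split} over $c(G)$, but Lee's theorem gives a (generally non-split) product decomposition anyway; not having it at hand, you instead extract an infinite compact metrizable zero-dimensional subgroup directly: pull back an infinite metrizable subgroup $Q$ of the profinite quotient $G/c(G)$ to a metrizable compact non-Lie group $H$ with $c(H)=c(G)$, realize $H$ as a countable strictly decreasing projective limit of Lie quotients $H/M_n$, and use the descending chain condition on closed subgroups of the compact Lie group $c(G)$ to conclude that some $M_N$ is totally disconnected and infinite, contradicting \Zcm. Every step of this checks out (the identifications $c(H)=c(G)$, the metrizability of $H$ from that of $c(H)$ and $Q$, and the stabilization of the chain $c(M_n)$ are all standard facts about compact groups). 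Your route buys independence from Lee's theorem at the cost of length; the paper's buys brevity at the cost of citing a deeper structural result.
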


\begin{proof} 
We first consider the special case of our lemma.
\begin{claim}
\label{compact:connected:tds:is:Lie:claim}
The conclusion of our lemma holds when $G$ is connected.
\end{claim}

\begin{proof}
Indeed, the  center $Z(G)$ of $G$ is a compact abelian group satisfying \Zcm, so $Z(G)$ is a Lie group by Lemma \ref{exotic:tori:lemma}.  Assume that $G$ itself is not a Lie group. Applying Theorem \ref{compact:connected:group:with:Lie:center}(d), we conclude that $G$ contains a subgroup $R$ topologically isomorphic to an infinite product of compact connected Lie groups $L_j$.  Since each group $L_j$ has a non-trivial torsion element $x_j$, the subgroup $R$ (hence, also $G$) contains a subgroup $P$ topologically isomorphic to an infinite product of finite non-trivial groups. Since $P$ is zero-dimensional, this contradicts our assumption that $G$ satisfies \Zcm.
\end{proof}

Since $c(G)$ is closed in $G$, it is compact. Being a subgroup of $G$, the group $c(G)$ also satisfies \Zcm.  Claim \ref{compact:connected:tds:is:Lie:claim} implies that $c(G)$ is a Lie group. According to Lee's theorem \cite[Theorem 9.41]{HM}, $G$ has zero-dimensional compact subgroup $N$ such that $G = c(G)\cdot N$.  Since $G$ satisfies \Zcm, $N$ is finite, and so $c(G)$ is a clopen subgroup of $G$.  Since $c(G)$ is locally Euclidean, so is $G$. Thus, $G$ is a Lie group as well.
\end{proof}

\begin{proposition}
\label{locally:compact:are:Lie}
A locally compact group satisfying \Zcm\ is a Lie group.
\end{proposition}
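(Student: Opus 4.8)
The plan is to deduce the statement from Glushkov's criterion \cite{G}: a locally compact group is a Lie group if and only if it is an NSS group. Hence it suffices to exhibit, inside the locally compact group $G$, an \emph{open} subgroup possessing a neighbourhood of the identity that contains no non-trivial subgroup; once that open subgroup is shown to be a Lie group, $G$ becomes locally Euclidean and therefore a Lie group itself.

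First I would invoke the structure theory of locally compact groups coming from the solution of Hilbert's fifth problem (Gleason, Montgomery--Zippin, Yamabe): $G$ has an open subgroup $G'$ admitting arbitrarily small compact normal subgroups with Lie quotient. Fix one such compact normal subgroup $N$ of $G'$ with $G'/N$ a Lie group. Since $N$ is a (compact) subgroup of $G$, it inherits property \Zcm, so Lemma \ref{compact:Zc:are:Lie} applies and yields that $N$ is a compact Lie group. Consequently both $N$ and $G'/N$ are Lie groups, hence NSS.

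Next I would merge these two NSS properties to conclude that $G'$ itself is NSS. Let $\pi\colon G'\to G'/N$ be the quotient homomorphism. Using that $G'/N$ is NSS, pick a neighbourhood $\bar U$ of the identity in $G'/N$ containing no non-trivial subgroup and put $U=\pi^{-1}(\bar U)$; then any subgroup of $G'$ contained in $U$ is carried by $\pi$ onto the trivial subgroup, hence lies in $\ker\pi=N$. Using that $N$ is NSS, pick a neighbourhood $V$ of the identity in $N$ containing no non-trivial subgroup, write $V=V'\cap N$ with $V'$ open in $G'$, and set $W=U\cap V'$. Any subgroup $S\subseteq W$ then satisfies $S\subseteq N$ (since $S\subseteq U$) and therefore $S=S\cap N\subseteq V'\cap N=V$, forcing $S=\{e\}$. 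Thus $G'$ is a locally compact NSS group, so $G'$ is a Lie group by Glushkov's criterion, and since $G'$ is open in $G$, the group $G$ is locally Euclidean and hence a Lie group.

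I expect the only genuinely delicate point to be citing the structure theorem for locally compact groups in exactly the form needed, namely that an open subgroup can be chosen which is an extension of a Lie group by a compact normal subgroup, together with the minor topological bookkeeping guaranteeing that ``small subgroups of $G'$ can only hide inside $N$''. Everything else in the argument is routine. (One could alternatively bypass the NSS manipulation by quoting the fact that a locally compact extension of a Lie group by a compact Lie normal subgroup is again a Lie group, but the route through Glushkov's criterion is preferable here since that criterion is already available in the paper.)
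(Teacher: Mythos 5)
Your argument is correct, but it takes a genuinely different route from the paper. The paper invokes a theorem of Davis: every locally compact group $G$ is \emph{homeomorphic} to $K\times\R^n\times D$ with $K$ a compact subgroup of $G$, $n\in\N$ and $D$ discrete; since $K$ inherits \Zcm\ and is therefore a Lie group by Lemma \ref{compact:Zc:are:Lie}, the product is locally Euclidean and hence $G$ is Lie. You instead use the Gleason--Montgomery--Zippin--Yamabe approximation theorem to get an open subgroup $G'$ with a compact normal subgroup $N$ such that $G'/N$ is Lie, apply Lemma \ref{compact:Zc:are:Lie} to $N$, and then run the standard ``extension of an NSS group by an NSS group is NSS'' argument before closing with Glushkov's criterion; your verification that small subgroups of $G'$ must hide inside $N$ and then inside $V$ is carried out correctly. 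Both routes rest on the same compact-case lemma and on deep structure theory from the solution of Hilbert's fifth problem; the paper's version is shorter because Davis's homeomorphism does all the gluing at once, while yours has the advantage of staying entirely within the NSS/Glushkov framework that the paper has already set up in Section \ref{Exotic}, at the price of citing the Yamabe open-subgroup theorem, which is not otherwise used in the paper. One cosmetic remark: in your final step you could equally note that $G$ itself is NSS (since the NSS neighbourhood $W$ of $G'$ is open in $G$ and any subgroup of $G$ inside $W$ lies in $G'$), avoiding the appeal to local Euclideanness altogether.
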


\begin{proof}
By a theorem of Davis \cite{Dav} (see also \cite{CM}), $G$ is {\em homeomorphic} to a product $K \times \R^n \times D$, where $K$ is a compact subgroup of $G$, $n\in \N$ and $D$ is a discrete space. Being a subgroup of $G$, the group $K$ satisfies \Zcm.  So $K$ must be a Lie group by  Lemma \ref{compact:Zc:are:Lie}.
Since $K$ is a locally Euclidean space, the whole group $G \approx K \times \R^n \times D$ is a locally Euclidean space, i.e., $G$ is a Lie group.
\end{proof}

\begin{lemma}
\label{omega-bounded:lemma}
Let $G$ be a topological group such that the closure of every countable subgroup of $G$ is a compact Lie group. Then $G$ is a compact Lie group.
\end{lemma}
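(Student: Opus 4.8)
The plan is to extract from the hypothesis, first, that $G$ must be \emph{compact}, and then, separately, that it is a \emph{Lie group}. For the first part I would argue that the hypothesis forces $G$ to be $\omega$-bounded: given a countable subset $A\subseteq G$, the subgroup $\langle A\rangle$ it generates is still countable, so by hypothesis $\overline{\langle A\rangle}$ is a compact Lie group; since $\overline{A}\subseteq\overline{\langle A\rangle}$ is a closed subset of a compact set, it is compact. Thus $G$ is $\omega$-bounded, hence in particular countably compact and precompact. Now, to get full compactness I would pass to the completion $\widetilde{G}$ (the two-sided uniformity completion), which is compact because $G$ is precompact; it suffices to show $G=\widetilde{G}$. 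If $g\in\widetilde{G}$, pick a countable subset of $G$ whose closure in $\widetilde{G}$ contains $g$ — possible because $G$ is dense in the compact, hence separable-on-small-pieces\ldots\ actually the cleanest route is: $\widetilde{G}$ is compact, so any $g\in\widetilde G$ lies in the closure of some countable $A\subseteq G$; but $\overline{A}^{\,G}$ is compact by the $\omega$-boundedness argument, hence closed in $\widetilde G$, hence equals $\overline{A}^{\,\widetilde G}\ni g$, forcing $g\in G$. So $G=\widetilde G$ is compact.

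Once $G$ is known to be compact, I would invoke the structure theory already developed in the paper. By Fact \ref{compact:metrizable:subgroup}, if $G$ were infinite-dimensional-in-the-relevant-sense, or more precisely if $G$ failed to be a Lie group, one wants to produce a countable subgroup whose closure is a non-Lie compact group, contradicting the hypothesis. Concretely: suppose $G$ is compact but not a Lie group. Then by Lemma \ref{compact:Zc:are:Lie} (contrapositive) $G$ fails property \Zcm, so $G$ has an infinite compact metrizable zero-dimensional subgroup $K$. A compact metrizable group is separable, so pick a countable dense subgroup $D$ of $K$; then $\overline{D}=K$ by density, and $K$ is infinite, compact, zero-dimensional, hence \emph{not} a Lie group (a zero-dimensional Lie group is discrete, hence finite if compact). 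This contradicts the hypothesis that the closure of every countable subgroup of $G$ is a compact Lie group. Therefore $G$ is a Lie group, completing the proof.

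The main obstacle I anticipate is the compactness step: going from ``$G$ is $\omega$-bounded'' to ``$G$ is compact'' requires care, because $\omega$-bounded groups need not be compact in general (e.g.\ $\Sigma$-products). The point that saves us here is that the hypothesis is genuinely about \emph{closures of countable subgroups being compact}, which pins down the completion on every ``small'' piece and lets one show $G$ is already complete in its own uniformity; I would want to state this cleanly, perhaps noting that a precompact $\omega$-bounded group in which the closure of every countable subgroup is compact must be compact because every point of the completion is a limit of a countable subset of $G$ and that subset's closure inside $G$ is already compact, hence closed in the completion. After that the Lie-property step is essentially a packaging of Lemma \ref{compact:Zc:are:Lie} together with separability of compact metrizable groups, and should be routine.
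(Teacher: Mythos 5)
Your first step (that $G$ is $\omega$-bounded, hence countably compact and precompact) and your second half (compact $+$ hypothesis $\Rightarrow$ Lie, via the contrapositive of Lemma \ref{compact:Zc:are:Lie} and separability of compact metrizable groups) are both correct; the latter is in fact a tidier route than the corresponding part of the paper's argument. The genuine gap is in the compactness step. You assert that every $g$ in the compact completion $\widetilde G$ lies in the closure of some countable subset $A$ of $G$. That is a countable-tightness assertion about $\widetilde G$, and it fails for dense $\omega$-bounded subgroups of compact groups in general: take $G$ to be the $\Sigma$-product $\{x\in \Z(2)^{\omega_1}: |\mathrm{supp}(x)|\le\omega\}$, a dense $\omega$-bounded subgroup of $K=\Z(2)^{\omega_1}$; any countable $A\subseteq G$ has all supports inside a single countable $S\subseteq\omega_1$, so $\overline{A}^{K}\subseteq\{x: x_\alpha=0 \mbox{ for }\alpha\notin S\}$ misses the all-ones element of $K$. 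This $G$ of course violates the hypothesis of the lemma (closures of countable subgroups are infinite zero-dimensional compact groups), so it is not a counterexample to the statement --- but that is precisely the problem: your compactness argument nowhere uses the Lie hypothesis, so it cannot be correct as written. To invoke countable tightness or separability of $\widetilde G$ you would first need to know $\widetilde G$ is metrizable (e.g.\ a Lie group), which is what is being proved.

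The paper closes exactly this gap by a different mechanism: all separable compact subgroups of $G$ are Lie by hypothesis, their dimensions are shown to be uniformly bounded (using that the family of such subgroups is countably directed), one picks a connected separable compact subgroup $H$ of maximal dimension, and one proves $H$ has finite index in $G$: otherwise a countable set $X$ escaping every $F\cdot H$ with $F$ finite would generate, together with a countable dense subgroup of $H$, a separable compact Lie subgroup $K$ whose identity component must coincide with $H$ by maximality of dimension, forcing $X\subseteq K=F\cdot H$ after all. Compactness and the Lie property of $G$ then drop out simultaneously, since $G$ is a finite union of cosets of the compact Lie group $H$. If you want to keep your two-stage plan, the completion argument must be replaced by something of this kind; the remainder of your proof can then stand.
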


\begin{proof}
Let $\mathscr{K}$ be the family of all separable compact subgroups of $G$. The following claim is immediate corollary of the assumption of our lemma.

\begin{claim}\label{Cl1}
\label{members:of:K:are:Lie:groups}\label{c(K):has:finite:index:in:K}
Each $K\in\mathscr{K}$ is a compact Lie group. In particular,  for every $K\in\mathscr{K}$  the subgroup $c(K)$ is open of finite index, so $c(K) \in \mathscr{K}$ and $\dim K = \dim c(K) <\infty$.
\end{claim}

\begin{proof}
The first assertion is just the definition of $ \mathscr{K}$.  The second assertion follows from the fact that the subgroup $c(K)$, as well as the quotient $K/c(K)$, are compact Lie groups. So  $K/c(K)$ is finite and $c(K)$ is a clopen subgroup of $K$ and $\dim K = \dim c(K) <\infty$. Since $K$ is separable, so is $c(K)$. Therefore, $c(K)\in\mathscr{K}$. 
\end{proof}

\begin{claim}
\label{countably:directed}
If $\mathscr{L}$ is a countable subfamily of $\mathscr{K}$, then there exists $K\in\mathscr{K}$ such that $L\subseteq K$ for every $L\in\mathscr{L}$.
\end{claim}

\begin{proof}
For every $L\in\mathscr{L}$ choose a countable dense subset $D_L$ of $L$. Since $\mathscr{L}$ is countable, the subgroup $D$ of $G$ generated by
$\bigcup\{D_L:L\in\mathscr{L}\}$ is countable, so $K=\overline{D}$ is a separable compact group,  so $K\in\mathscr{K}$. If $L\in\mathscr{L}$, then $D_L\subseteq D$, which yields $L=\overline{D_L}\subseteq \overline{D}=K$.
\end{proof}

\begin{claim}
\label{dimension:is:bounded}
There exists $n\in\N$ such that $\dim K\le n$ for every $K\in\mathscr{K}$.
\end{claim}

\begin{proof} Assume the contrary. Then for every $n\in\N$ there exists $L_n\in\mathscr{K}$ such that $\dim L_n\ge n$. Apply Claim~\ref{countably:directed} to the family
$\mathscr{L}=\{L_n:n\in\N\}$ to get $K\in\mathscr{K}$ such that $L_n\subseteq K$ for every $n\in\N$. Clearly, $\dim K\ge \dim L_n\ge n$ for every $n\in\N$. On the other hand, $\dim K$ must be finite by Claim \ref{members:of:K:are:Lie:groups}, a contradiction.
\end{proof}

\begin{claim}
\label{totally:disconnected:subgroups:of:G:are:finite}
Every zero-dimensional compact subgroup $N$ of $G$ is finite, i.e., $G$ satisfies \Zgen.
\end{claim}

\begin{proof}
Let $N$ be a zero-dimensional compact subgroup of $G$ and let $D$ be a countable subgroup of $N$. Then $K=\overline{D}\subseteq N$, so $K$ is zero-dimensional. Since $K\in\mathcal{K}$, $K$ is a compact Lie group by Claim \ref{members:of:K:are:Lie:groups}. It follows that $K$ is  finite. Since $D\subseteq K$, we conclude that $D$ must be finite as well. This proves that $N$ is finite. 
\end{proof}

Use Claims \ref{Cl1} and \ref{dimension:is:bounded} to choose a connected $H\in\mathscr{K}$ such that
\begin{equation}
\label{H:has:maximal:dimension}
\dim H=\max\{\dim K: K\in\mathscr{C}\}.
\end{equation}

\begin{claim}
\label{H:has:finite:index:in:G} $H$ has finite index in $G$.
\end{claim}

\begin{proof}
Assume that $H$ has infinite index in $G$ and choose a countably infinite subset $X$ of $G$ such that
\begin{equation}
\label{eq:8}
X\setminus (F\cdot H)\not=\emptyset
\mbox{ for every finite subset }
F
\mbox{ of }
G.
\end{equation}
Let $D$ be the closed subgroup of $G$ generated by $X$. Then ${D}\in\mathscr{K}$. Since $H\in \mathscr{K}$, applying Claim \ref{countably:directed}
we can choose $K\in\mathscr{K}$ such that $H\subseteq K$ and $X\subseteq D\subseteq K$. Since $H$ is a connected subgroup of $K$, we have $H\subseteq c(K)$. By Claim \ref{c(K):has:finite:index:in:K}, 
$c(K)$ is a clopen subgroup of $K$ and $c(K)\in\mathscr{K}$. From \eqref{H:has:maximal:dimension} we conclude that $\dim c(K)\le \dim H$. Since $H\subseteq c(K)$, the converse inequality $\dim H\le \dim c(K)$ holds as well. Both $H$ and $c(K)$ are compact connected Lie groups by Claim \ref{members:of:K:are:Lie:groups}. 
Since, $H\subseteq c(K)$ and $\dim H=\dim c(K)$, we conclude that $H=c(K)$. Thus, $H$ has finite index in $K$. That is, $X\subseteq K=F\cdot H$ for some finite set $F\subseteq K$, in contradiction with \eqref{eq:8}.
\end{proof}

Since $H$ is a compact Lie group by Claim \ref{members:of:K:are:Lie:groups}, and $H$ has finite index in $G$ by Claim \ref{H:has:finite:index:in:G}, it follows that $G$ is a compact Lie group as well.
\end{proof}

\begin{corollary}
\label{omega:bounded:corollary}
An $\omega$-bounded group satisfying \Zcm\ is a Lie group.
\end{corollary}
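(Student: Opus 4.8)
The plan is to combine Lemma~\ref{compact:Zc:are:Lie} with Lemma~\ref{omega-bounded:lemma}, using $\omega$-boundedness as the bridge between ``countable'' and ``compact''. First I would let $G$ be an $\omega$-bounded group satisfying \Zcm\ and pick an arbitrary countable subgroup $D$ of $G$. Since $D$ is a countable subset and $G$ is $\omega$-bounded, the closure $\overline{D}$ is compact; being the closure of a subgroup, it is itself a (compact) subgroup of $G$.

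Next I would observe that property \Zcm\ is obviously inherited by subgroups: any compact metrizable zero-dimensional subgroup of $\overline{D}$ is also such a subgroup of $G$, hence finite by the hypothesis on $G$. Thus $\overline{D}$ is a compact group satisfying \Zcm, and Lemma~\ref{compact:Zc:are:Lie} applies to give that $\overline{D}$ is a compact Lie group.

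Since $D$ was an arbitrary countable subgroup of $G$, this shows that the closure of every countable subgroup of $G$ is a compact Lie group. Therefore Lemma~\ref{omega-bounded:lemma} applies and yields that $G$ itself is a compact Lie group; in particular $G$ is a Lie group, which is the desired conclusion.

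There is no real obstacle here: the argument is a short two-step deduction, and the only point that even requires a remark is the (trivial) fact that \Zcm\ passes to subgroups, together with the (equally trivial) fact that the closure of a subgroup is a subgroup. The substantive content has already been isolated into Lemmas~\ref{compact:Zc:are:Lie} and~\ref{omega-bounded:lemma}, so this corollary is purely a matter of assembling them.
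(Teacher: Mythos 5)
Your proposal is correct and follows exactly the same route as the paper: close off a countable subgroup using $\omega$-boundedness, note that \Zcm\ passes to the compact closure so Lemma~\ref{compact:Zc:are:Lie} applies, and then invoke Lemma~\ref{omega-bounded:lemma}. The only (harmless) difference is that you spell out the inheritance of \Zcm\ by subgroups, which the paper leaves implicit.
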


\begin{proof}
It suffices to check that the assumption of Lemma \ref{omega-bounded:lemma} is satisfied. Indeed, let $H$ be a countable subgroup of $G$. Since $G$ is $\omega$-bounded, the closure $\overline{H}$ of $H$ in $G$ is compact. By  Lemma \ref{compact:Zc:are:Lie} applied to $\overline{H}$, we conclude that $\overline{H}$ is a compact Lie group.
\end{proof}

Example \ref{cc:example:distinguishing:Zs}(ii) shows that ``$\omega$-bounded'' cannot be weakened to ``countably compact'' in this corollary.

\begin{lemma}
\label{locally:omega:bounded:lemma}
A locally $\omega$-bounded group satisfying \Zcm\ is a Lie group.
\end{lemma}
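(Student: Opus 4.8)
The plan is to reduce the statement to \emph{local compactness} and then invoke Proposition \ref{locally:compact:are:Lie}. Since being a Lie group is a local property, it suffices to exhibit an open subgroup of $G$ that is locally compact. Fix an open symmetric neighbourhood $U$ of $e$ with $\overline U$ $\omega$-bounded. The open subgroup $H=\langle U\rangle=\bigcup_{n\in\N}U^n$ is again locally $\omega$-bounded (with the same $U$, since $\overline U\subseteq H$, so the closure of $U$ in $H$ coincides with $\overline U$) and inherits \Zcm; hence I may assume $G=\langle U\rangle$.

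First I would prove that $\overline U$ is precompact. Given an arbitrary neighbourhood $W$ of $e$, pick a symmetric neighbourhood $V$ with $V^2\subseteq W$, and let $A\subseteq\overline U$ be a maximal subset such that $aV\cap bV=\emptyset$ for distinct $a,b\in A$; equivalently, $b^{-1}a\notin V^2$ for distinct $a,b\in A$. If $A$ were infinite, a countably infinite $A_0\subseteq A$ would have compact closure in $\overline U$ by $\omega$-boundedness, hence would be covered by finitely many translates $x_1V,\dots,x_kV$; but two points of $A_0$ in a common $x_iV$ satisfy $b^{-1}a\in V^{-1}V=V^2$, a contradiction, so $|A_0|\le k$, which is absurd. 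Thus $A$ is finite, and maximality yields $\overline U\subseteq\bigcup_{a\in A}aV^2\subseteq A\cdot W$. So $\overline U$ is precompact, and therefore in the (locally compact) two-sided completion $\widetilde G$ of $G$ the closure $Y=\mathrm{cl}_{\widetilde G}(U)$ is a compact neighbourhood of $e$ with $\overline U=G\cap Y$ dense in $Y$; moreover $\overline U$ is sequentially closed in $\widetilde G$, since $G$ is sequentially complete.

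Next I would split according to whether $\widetilde G$ is a Lie group. If it is, then $Y$ is compact metrizable, so its dense subspace $\overline U$ is separable; a separable $\omega$-bounded space is compact (its countable dense subset has compact closure equal to the whole space), whence $\overline U=Y$ and $G$ is locally compact, as desired. If $\widetilde G$ is not a Lie group, then by the contrapositive of Proposition \ref{locally:compact:are:Lie} it contains an infinite compact metrizable zero-dimensional subgroup, and the task becomes to manufacture such a subgroup \emph{inside} $G$, contradicting \Zcm. Here I would use that $G$, being locally precompact and sequentially complete, satisfies \Zm\ by Proposition \ref{locally:precompact:sequentially:complete}, together with Corollary \ref{subgroups:of:Lie:having:Zm} (a subgroup of an abelian Lie group satisfying \Zm\ is closed, hence Lie) and a Gleason--Yamabe reduction presenting an open subgroup of $\widetilde G$ as a projective limit $\varprojlim\widetilde G/N_i$ of Lie groups along compact normal subgroups $N_i$ with $\bigcap_iN_i=\{e\}$: if some $N_i$ meets $G$ trivially, then $G$ embeds densely into the Lie group $\widetilde G/N_i$ and so (via Corollary \ref{subgroups:of:Lie:having:Zm}, after passing to an abelian subgroup if necessary) $G$ is itself a Lie group; otherwise the subgroups $G\cap N_i$ are all non-trivial, and by exploiting sequential completeness of $G$ and metrizability of the $N_i$ (so that each $G\cap N_i$ is closed in $N_i$, hence a compact metrizable subgroup of $G$) one locates inside $G$ an infinite compact metrizable zero-dimensional subgroup.

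The main obstacle is exactly this last descent step in the non-Lie case: transferring an offending zero-dimensional subgroup from the completion $\widetilde G$ down to the dense subgroup $G$. A dense subgroup need not meet a given compact subgroup in a large set, so one genuinely has to exploit the sequential completeness of $G$ together with the metrizability of the relevant profinite subgroups (passing, if needed, to quotients by small compact normal subgroups to secure metrizability), and then feed the resulting subgroup into the already established \Zm\ (equivalently \Zcm) for $G$. Everything else --- the reduction to $\langle U\rangle$, the precompactness of $\overline U$, and the easy branch where $\widetilde G$ is Lie --- is routine.
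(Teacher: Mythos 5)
Your reduction to $G=\langle U\rangle$, the total-boundedness argument for $\overline U$, and the branch where the completion $\widetilde G$ is a Lie group are all fine (there $\overline U$ is dense in the compact metrizable set $Y$, hence separable, hence compact by $\omega$-boundedness, so $G$ is locally compact and Proposition~\ref{locally:compact:are:Lie} applies). The problem is the other branch, and you have in effect conceded it yourself: the ``descent step'' that is supposed to manufacture an infinite compact metrizable zero-dimensional subgroup of $G$ from one in $\widetilde G$ is never carried out, and the sketch you give cannot be completed as stated. The intermediate claims are not sound: if some $N_i$ meets $G$ trivially, the induced map $G\to\widetilde G/N_i$ is a continuous injection but not in general a topological embedding onto a closed subgroup, so you cannot conclude that $G$ is Lie (and Corollary~\ref{subgroups:of:Lie:having:Zm} is an abelian statement, so ``passing to an abelian subgroup if necessary'' does not give you anything about $G$ itself); if instead every $G\cap N_i$ is non-trivial, these intersections may all be finite, and the $N_i$ supplied by a Gleason--Yamabe decomposition need not be zero-dimensional, so there is no infinite compact metrizable zero-dimensional subgroup in sight. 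More fundamentally, in this branch you only invoke precompactness of $\overline U$, sequential completeness, and \Zm\ of $G$ --- and these hypotheses provably do not suffice: by Corollary~\ref{Bohr:proposition} there exist precompact, sequentially complete, zero-dimensional non-discrete groups satisfying \Zm, whose completions are non-Lie compact groups. So any correct argument must use $\omega$-boundedness again in the non-Lie branch, and your outline never does.

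For comparison, the paper avoids the case split on $\widetilde G$ entirely and instead proves directly that $\overline U$ is compact: it takes a closed $G_\delta$-subgroup $N\subseteq U$ of $H=\langle\overline U\rangle$, observes that $N$ is $\omega$-bounded and hence a (metrizable) Lie group by Corollary~\ref{omega:bounded:corollary} --- this is where \Zcm\ and $\omega$-boundedness re-enter --- so that $\{1\}$ is $G_\delta$ in $H$; then Guran $\aleph_0$-boundedness of $H$ plus Arhangel'skii's condensation theorem give a weaker separable metric topology on $H$, and the folklore fact that a continuous bijection from a pseudocompact space onto a metric space is a homeomorphism forces $\overline U$ to be metrizable, hence compact. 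If you want to salvage your completion-based strategy, you would need to inject $\omega$-boundedness into the non-Lie branch in some comparably essential way; as written, that branch is a genuine gap.
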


\begin{proof}
Let $G$ be a locally $\omega$-bounded group satisfying \Zcm.  Fix an open neighbourhood  $U$ of the identity element $1$ of
$G$ such that $\overline{U}$ is $\omega$-bounded. Let $H$ be the subgroup of $G$ generated by $\overline{U}$. Then $H$ is a clopen subgroup of $G$, and $G$ is covered by disjoint translates of $H$, so it suffices to show that $H$ is a Lie group.  Since $H$ is closed in $G$, it satisfies \Zcm.

Take a closed $G_\delta$-subgroup $N$ of $H$ contained in $U$. (Such a subgroup can be obtained by a standard closing off argument.) 
Since $H$ is topologically generated by its $\omega$-bounded subset $\overline{U}$, it is $\aleph_0$-bounded in the sense of Guran \cite{Guran}; that is, for every open neighbourhood $V$ of $H$ there exists a countable set $X\subseteq H$ such that $H=XV$; see \cite{Arh}. As a closed subspace of the $\omega$-bounded space $\overline{U}$, $N$  is $\omega$-bounded. By Corollary \ref{omega:bounded:corollary}, $N$ is a Lie group, so it is metrizable. Since $N$ is a $G_\delta$-set in $H$ and $\{1\}$ is a $G_\delta$-set in $N$, we conclude that $\{1\}$ is a $G_\delta$-subset of $H$.  Since $H$ is $\aleph_0$-bounded group such that $\{1\}$ is a $G_\delta$-subset of $H$, applying the main result in \cite{Arh} we conclude that $H$ has a weaker separable metric topology. Therefore, the subspace $\overline{U}$ of $H$ also has a weaker metric topology.

Now we use a well-known folklore fact that every continuous one-to-one map from a pseudocompact Tychonoff space onto a metric space
is a homeomorphism. Since $\overline{U}$ is $\omega$-bounded (and thus, pseudocompact), this allows us to conclude that $\overline{U}$ is metrizable. Being also pseudocompact, $\overline{U}$ is compact.  Therefore, $H$ is locally compact.  Since $H$ satisfies \Zcm, it is a Lie group by Proposition \ref{locally:compact:are:Lie}.
\end{proof}

\noindent {\bf Proof of Theorem \ref{thm:AB}: }
The implications (i)$\to$(ii)$\to$(iii)$\to$(iv) are established in Proposition \ref{arrows:between:Z},
and the implication  (iv)$\to$(i) follows from Lemma \ref{locally:omega:bounded:lemma}.
\qed

\section{Proofs of Theorems  \ref{thm:NEW} and \ref{last:theorem}}
\label{Min}

Recall that a subgroup $G$ of topological group $K$ is called \emph{locally essential\/} in $K$ if there exists a neighborhood $U$ of the identity of $K$
such that $G$ non-trivially intersects every non-trivial closed normal subgroup of $K$ contained in $U$ \cite{ACDD2}.

We shall need the following local minimality criterion generalizing the classical minimality criterion \cite{DPS,P,S}.
\begin{fact}
\label{minimality-criterion}
 \cite{ACDD2}
A dense subgroup $G$ of a topological group $K$ is locally minimal if and only if $K$ is locally minimal and  $G$ is locally essential in $K$.
\end{fact}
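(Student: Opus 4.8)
The statement is the local analogue of the classical minimality criterion of Banaschewski, Prodanov and Stephenson \cite{DPS,P,S}, and the plan is to localize the classical proof, keeping careful track of the two neighborhoods that now intervene (the one witnessing local minimality and the one witnessing local essentiality). Write $\tau$ for the topology of $K$ and $\tau|_G$ for its restriction to $G$. Since $G$ carries the subspace topology, any local minimality witness for $G$ may be taken of the form $V=G\cap U$ with $U$ open in $K$; and since a neighborhood contained in a witness is again a witness, such a $U$ may be shrunk at will.

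For the forward implication, assume $G$ is locally minimal, witnessed by $V=G\cap U$. First I would check that $K$ is locally minimal with witness $U$: a Hausdorff group topology $\rho\subseteq\tau$ on $K$ for which $U$ is a $\rho$-neighborhood restricts to a Hausdorff $\rho|_G\subseteq\tau|_G$ for which $V$ is a $\rho|_G$-neighborhood, so $\rho|_G=\tau|_G$; since every $\rho$-open set is $\tau$-open, a density argument (a $\rho$-open $T\ni e_K$ with $T\cap G\subseteq S\cap G$, for $S$ a small symmetric $\tau$-open neighborhood, is itself $\tau$-open, hence $T\subseteq\overline{T\cap G}^{\tau}\subseteq\overline{S}^{\tau}\subseteq SS$) then yields $\rho=\tau$. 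Next I would show $G$ is locally essential, taking as witness a symmetric open $U_1\ni e_K$ with $U_1U_1\subseteq U$: if $N$ were a non-trivial closed normal subgroup of $K$ with $N\subseteq U_1$ and $G\cap N=\{e\}$, then the quotient map $\pi\colon K\to K/N$ would restrict to a continuous monomorphism whose initial topology $\sigma$ on $G$ is Hausdorff and contained in $\tau|_G$; since $\pi$ is open and $N\subseteq U_1$, one has $(\pi|_G)^{-1}(\pi(U_1))=G\cap U_1N\subseteq G\cap U_1U_1\subseteq V$, so $V$ is a $\sigma$-neighborhood of $e_G$ and hence $\sigma=\tau|_G$, i.e. $\pi|_G$ is a topological embedding. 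Then approximating any $x\in N$ by a net in $G$ and applying $\pi$ forces that net to converge to $e_G$ in $G$, hence to $e_K$ in $K$, so $x=e_K$ by the Hausdorff property; thus $N=\{e\}$, a contradiction.

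For the reverse implication, assume $K$ is locally minimal with witness $W$ and $G$ is locally essential with witness $U$. I would choose an open $V_0\ni e_K$ with $\overline{V_0}\subseteq W\cap U$ and claim that $V=G\cap V_0$ witnesses local minimality of $G$. Given a Hausdorff $\sigma\subseteq\tau|_G$ for which $V$ is a $\sigma$-neighborhood, let $\widehat G$ be the two-sided completion of $(G,\sigma)$; since $G$ is dense in $K$, the two-sided completion of $(G,\tau|_G)$ is canonically $\widehat K$, so the continuous homomorphism $\mathrm{id}\colon(G,\tau|_G)\to\widehat G$ extends to a continuous $f\colon\widehat K\to\widehat G$, and $N:=\ker(f|_K)$ is a closed normal subgroup of $K$ with $N\cap G=\{e\}$. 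The crucial point is that $N\subseteq U$: if $x\in N$, pick a net $g_\alpha$ in $G$ with $g_\alpha\to x$ in $\tau$; then $g_\alpha=f(g_\alpha)\to f(x)=e$ in $\widehat G$, i.e. $g_\alpha\to e$ in $\sigma$, so $g_\alpha$ is eventually in $V\subseteq V_0$ and therefore $x\in\overline{V_0}^{\tau}\subseteq U$. Local essentiality then gives $N=\{e\}$, so $f|_K$ is injective; pulling back the subspace topology of $f(K)\subseteq\widehat G$ produces a Hausdorff group topology $\rho\subseteq\tau$ on $K$ with $\rho|_G=\sigma$. Finally, $V=G\cap V_0$ being a $\rho|_G$-neighborhood yields a $\rho$-open (hence $\tau$-open) $T\ni e_K$ with $T\cap G\subseteq V_0$, so $T\subseteq\overline{T\cap G}^{\tau}\subseteq\overline{V_0}^{\tau}\subseteq W$; thus $W$ is a $\rho$-neighborhood of $e_K$, and local minimality of $K$ forces $\rho=\tau$, whence $\sigma=\rho|_G=\tau|_G$.

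The forward implication is essentially bookkeeping. The hard part is the step just highlighted in the reverse implication: local essentiality controls only closed normal subgroups lying inside the witness $U$, so one must verify that the kernel produced by the completion argument is actually small — which it is, because $\sigma$-convergence of the approximating net drags it into $V_0$ and hence its limit into $\overline{V_0}\subseteq U$. This is exactly what lets the single choice $\overline{V_0}\subseteq W\cap U$ serve both purposes at once. Nothing in the argument uses commutativity, since only quotient maps, initial topologies and two-sided completions are invoked.
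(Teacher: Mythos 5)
The paper does not prove this statement --- it is quoted as a Fact from \cite{ACDD2} --- so there is no in-paper argument to compare against. Your proof is correct and follows the expected route (the localization of the Prodanov--Stephenson minimality criterion used in that reference): the quotient-by-$N$ argument for necessity of local essentiality, the completion/kernel argument for sufficiency, and in particular the key verification that the kernel $N=\ker(f|_K)$ lands inside the essentiality witness because $\sigma$-convergence pulls the approximating net into $V_0$, are all sound.
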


\begin{lemma}
\label{claim:8}
\label{Z:comp:extends:to:completion}
A locally minimal, locally precompact  abelian group satisfying \Zm\ is a Lie group.
\end{lemma}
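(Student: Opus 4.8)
The plan is to pass to the completion of $G$ and invoke the local minimality criterion. Let $K$ be the two-sided uniformity completion of $G$. Since $G$ is locally precompact and abelian, $K$ is a locally compact abelian group and $G$ is a dense subgroup of it. As $G$ is locally minimal, Fact~\ref{minimality-criterion} gives that $K$ is locally minimal and that $G$ is locally essential in $K$; fix a neighbourhood $U$ of $e_K$ witnessing this, so that $G\cap L\neq\{e_K\}$ for every non-trivial closed subgroup $L$ of $K$ with $L\subseteq U$ (in the abelian group $K$ every subgroup is normal).

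The key step is to show that $K$ satisfies \Zcm; once this is done, Proposition~\ref{locally:compact:are:Lie} gives that $K$ is a Lie group, and then Corollary~\ref{subgroups:of:Lie:having:Zm}, applied to the subgroup $G$ of the abelian Lie group $K$, yields that $G$ is a Lie group (indeed $G$ is then closed in $K$, so $G=K$). To prove that $K$ satisfies \Zcm, suppose towards a contradiction that $Q$ is an infinite compact metrizable zero-dimensional subgroup of $K$. Then $Q$ is an infinite metrizable profinite group, so it carries a decreasing sequence $Q=Q_0\supseteq Q_1\supseteq Q_2\supseteq\cdots$ of open subgroups forming a neighbourhood base at $e_K$ in $Q$; in particular $\bigcap_n Q_n=\{e_K\}$, and each $Q_n$, having finite index in the infinite group $Q$, is infinite. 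Choose $k$ with $Q_k\subseteq U$ (possible since $U\cap Q$ is a neighbourhood of $e_K$ in $Q$). The group $G\cap Q_k$ is closed in $G$, because $Q_k$ is closed in $K$, and as a subspace of $Q_k$ it is metrizable and zero-dimensional; hence it is discrete by \Zm, and therefore finite, being a discrete subgroup of the compact group $Q_k$. Since $G\cap Q_k$ is finite and $\bigcap_{n\ge k}Q_n=\{e_K\}$, there is $m\ge k$ with $Q_m\cap(G\cap Q_k)=\{e_K\}$, whence $G\cap Q_m=(G\cap Q_k)\cap Q_m=\{e_K\}$. But $Q_m$ is an infinite, hence non-trivial, closed subgroup of $K$ contained in $Q_k\subseteq U$, contradicting the choice of $U$. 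This contradiction proves that $K$ satisfies \Zcm, and the lemma follows.

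The main obstacle is conceptual rather than computational: property \Zm of $G$ by itself does not ascend to the completion (compare the Bohr-topology examples behind Corollary~\ref{Bohr:proposition}), so the whole point of the argument is to use local minimality --- through the local essentiality provided by Fact~\ref{minimality-criterion} --- to locate, inside $G$, a small closed zero-dimensional metric subgroup whenever the completion $K$ carries a non-trivial compact zero-dimensional subgroup arbitrarily close to $e_K$. The remaining ingredients (open subgroups of an infinite profinite group are infinite, and the intersection of a closed metrizable zero-dimensional subgroup of $K$ with $G$ is again closed, metrizable and zero-dimensional) are routine.
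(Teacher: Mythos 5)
Your proof is correct and follows essentially the same route as the paper: pass to the locally compact completion $K$, use Fact~\ref{minimality-criterion} to get local essentiality, show $K$ is a Lie group via the \Zcm\ characterization of locally compact Lie groups, and finish with Corollary~\ref{subgroups:of:Lie:having:Zm}. The only (cosmetic) difference is in the final contradiction: the paper picks non-trivial elements $g_n\in G\cap H_n$ in every member of a clopen-subgroup base of the putative compact metric zero-dimensional subgroup to produce a non-discrete closed zero-dimensional metric subgroup of $G$ (contradicting \Zm\ directly), whereas you first use \Zm\ to make $G\cap Q_k$ finite and then shrink to a $Q_m$ meeting $G$ trivially, contradicting local essentiality instead.
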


\begin{proof}  Let $G$ be a locally minimal, locally precompact  abelian group satisfying \Zm. 
Since $G$ is locally minimal, it is locally essential in its completion $K$ by Fact \ref{minimality-criterion}.
Therefore, we can fix a neighborhood $U$ of $0$ in $K$ such that $G$ non-trivially intersects every closed non-trivial subgroup of $K$ contained in $U$.

Since $G$ is locally precompact, $K$ is locally compact.    By Corollary \ref{subgroups:of:Lie:having:Zm}, it suffices to prove that 
$K$ is a Lie group. Suppose that $K$ is not a Lie group.  Since Theorem \ref{thm:AB} has already been proved in Section \ref{O-bou}, we can use its Corollary \ref{loc:compact:corollary} to find an infinite zero-dimensional compact metric subgroup $M$ of $K$.
Since $M$ is a zero-dimensional compact metric group, it has a countable base at $0$ formed by clopen subgroups of $M$. Thus, we can  choose 
a sequence $\{H_n:n\in\N\}$ of clopen subgroups of $M$ contained in $U\cap M$ which form a base at $0$ in $M$. 

Let $n\in\N$. Since $M$ is non-discrete, $H_n$ is non-trivial. Since $H_n$ is closed in $M$, it is compact. In particular, $H_n$ is closed in $K$. 
Since $H_n\subseteq U$, there exists $g_n\in G\cap H_n$ with $g_n\not=0$. 

Clearly, $\{g_n:n\in\N\}$ is an infinite sequence in  $N=G\cap M$ converging to $0$; in particular, $N$  is an infinite subgroup of $G$.  Since $M$ is metric and zero-dimensional, so is $N$. Since $M$ is closed in $K$, $N=G\cap M$ is closed in $G$. We found an infinite closed zero-dimensional subgroup $N$ of $G$. This contradicts our assumption that $G$ satisfies \Zm.
\end{proof}

\noindent{\bf Proof of Theorem \ref{thm:NEW}:}
The implications (i)$\to$(ii)$\to$(iii) follow from Proposition 
\ref{arrows:between:Z}, 
while the implication
(iii)$\to$(i) follows from Lemma \ref{Z:comp:extends:to:completion}.
The final part of our theorem follows from Proposition \ref{locally:precompact:sequentially:complete}.
\qed

\medskip

The following lemma is part of folklore. Its proof is included for completeness only. 

\begin{lemma}\label{T:is:direct:summand}
Let $\T^d$ be a subgroup of a compact abelian group $A$ for some $d\in\N$. Then there is a closed subgroup $B$ of $A$ such that $A=\T^d\times B$. 
\end{lemma}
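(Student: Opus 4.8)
The plan is to use Pontryagin duality to convert the splitting problem for the compact abelian group $A$ into a lifting problem for the discrete dual $\widehat{A}$. Writing $\widehat{A}$ for the dual of $A$, the inclusion $\T^d\hookrightarrow A$ dualizes to a surjective homomorphism $q:\widehat{A}\to\widehat{\T^d}\cong\Z^d$. Since $\Z^d$ is a free abelian group, hence projective, the surjection $q$ splits: there is a homomorphism $s:\Z^d\to\widehat{A}$ with $q\circ s=\mathrm{id}_{\Z^d}$. Consequently $\widehat{A}=\ker q\oplus s(\Z^d)$ as abstract abelian groups, with $s(\Z^d)\cong\Z^d$.

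Next I would dualize this algebraic splitting back. Let $C=\ker q$; it is a subgroup of the discrete group $\widehat{A}$, and $\widehat{A}=C\oplus s(\Z^d)$. Dualizing, $A=\widehat{\widehat{A}}\cong\widehat{C}\times\widehat{s(\Z^d)}\cong\widehat{C}\times\widehat{\Z^d}\cong\widehat{C}\times\T^d$. The factor corresponding to $s(\Z^d)$ under this isomorphism is exactly the annihilator of $C$ in $A$; call it the subgroup on which all characters in $C$ vanish. I need to check that this $\T^d$-factor coincides with the originally given copy of $\T^d$ inside $A$ — this follows because $C=\ker q$ is precisely the annihilator of $\T^d\subseteq A$ (as $q$ is the restriction-of-characters map), so by the annihilator correspondence the bidual identification sends $\T^d$ to the second factor. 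Setting $B=C^{\perp\perp}$... more cleanly: $B$ is the closed subgroup of $A$ equal to the annihilator of $s(\Z^d)$ in $A$, equivalently the image of $\widehat{C}$ under the above isomorphism. Then $A=\T^d\times B$ with $B$ closed.

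The one point requiring genuine care — and the main (minor) obstacle — is matching the abstract projection $s(\Z^d)$ with the concrete subgroup $\T^d\subseteq A$ under the double-duality isomorphism, i.e. verifying that the annihilator operations behave correctly so that the internal direct product decomposition of $A$ actually has the prescribed $\T^d$ as one of its factors rather than merely an isomorphic copy. This is handled by recalling that for a compact abelian group $A$ and a closed subgroup $H$, the annihilator $H^{\perp}$ in $\widehat{A}$ satisfies $(H^{\perp})^{\perp}=H$ under the canonical identification $A=\widehat{\widehat{A}}$, and that $\widehat{A}/H^{\perp}\cong\widehat{H}$. Applying this with $H=\T^d$ gives $H^{\perp}=\ker q=C$ and hence, from $\widehat{A}=C\oplus s(\Z^d)$, the internal decomposition $A=H\times C^{\perp}$ where $C^{\perp}=(H^{\perp})^{\perp}\cap\ldots$; concretely $B:=$ the annihilator in $A$ of the subgroup $s(\Z^d)$ is closed, satisfies $B\cap\T^d=\{0\}$ and $B+\T^d=A$, so $A=\T^d\times B$ as required. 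Everything else is routine duality bookkeeping.
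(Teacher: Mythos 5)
Your proposal is correct and follows essentially the same route as the paper: both pass to the dual, observe that the annihilator $N=(\T^d)^{\perp}=\ker q$ has free quotient $\widehat{A}/N\cong\Z^d$ and is therefore a direct summand of $\widehat{A}$, and then dualize back. Your extra care in matching the resulting $\T^d$-factor with the originally given copy of $\T^d$ via the double-annihilator identities is a welcome elaboration of a step the paper leaves implicit.
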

\begin{proof}
Let $N$ be the annihilator of $\T^d$ in the Pontryagin dual $\widehat{A}$ of $A$.  By a well-known fact (\cite[Proposition 3.4.14 (a)]{DPS}) we have $\widehat{\T^d}=\widehat{A}/N$. Since $\widehat{\T^d}=\Z^d$, the discrete group $\widehat{A}/N$ is a free abelian group. Therefore, $N$ is a direct summand of $\widehat{A}$ (see \cite[Theorem 14.4]{Fu} for this elementary fact). In other words, $\widehat{A}=\Z^d\times N$. Taking the duals in the last equality shows, that $B=\widehat{N}$ is as needed.
\end{proof}

\begin{lemma}
\label{getting:Zm:violating:subgroups:in:G}
Suppose that a topological group $K$ contains an infinite product $R=\prod_{i\in I} L_i$ of
compact Lie groups $L_i$.
If $G$ is a subgroup of $K$ 
satisfying \Zm, then $G\cap L_i=\{e_{L_i}\}$ for all but finitely many $i\in I$.
 \end{lemma}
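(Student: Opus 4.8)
The plan is to argue by contradiction: suppose that $G\cap L_i\neq\{e_{L_i}\}$ for infinitely many $i\in I$, and produce an infinite closed zero-dimensional subgroup of $G$, contradicting \Zm. First I would pass to this infinite subset of indices, so without loss of generality $G\cap L_i$ is non-trivial for every $i\in I$ with $I$ infinite. For each $i\in I$ pick a non-trivial element $x_i\in G\cap L_i$. A compact Lie group has no small subgroups, so $\overline{\langle x_i\rangle}$ is a non-trivial closed abelian subgroup of $L_i$; inside it I want to extract something zero-dimensional or at least something I can control. The cleanest route: a non-trivial compact abelian Lie group is $\T^{d}\times F$ with $(d,F)\neq(0,\{0\})$; if $d\geq 1$, the element $x_i$ of this group either has finite order (giving a non-trivial finite cyclic subgroup of $G\cap L_i$) or generates a subgroup whose closure is $\T^{k}\times F'$ for some $k\geq 1$. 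In the latter case $\T^k$ contains the non-trivial finite subgroup $\T^k[2]$, and since $\T^k$ is divisible one can use Lemma~\ref{T:is:direct:summand} to split it off and find a non-trivial torsion element of $G\cap L_i$ after all---more carefully, since $\overline{\langle x_i\rangle}$ is a non-trivial closed subgroup of the Lie group $L_i$, its own center or its finite part is non-trivial, and in every case $G\cap L_i$ contains a non-trivial element $y_i$ of finite order $m_i\geq 2$. (The key point is just: every non-trivial closed subgroup of a compact Lie group contains a non-trivial element of finite order, because its identity component is a torus and its component group is finite.)

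Granting that, let $y_i\in G\cap L_i$ have finite order $m_i\geq 2$ for each $i\in I$, and set $P=\overline{\prod_{i\in I}\langle y_i\rangle}$, the closure taken in $K$. Since the $L_i$ are independent direct factors of $R$ and each $\langle y_i\rangle\subseteq L_i$, the algebraic product $\bigoplus_{i\in I}\langle y_i\rangle$ sits inside $G$ (each of its elements has finitely many non-trivial coordinates, each lying in $G\cap L_i\subseteq G$), and its closure in the compact group $R$ is $\prod_{i\in I}\langle y_i\rangle\cong\prod_{i\in I}\Z/m_i\Z$, an infinite compact metrizable zero-dimensional group. Now here is the subtlety: this product lives in $R$, hence in $K$, but it need not lie in $G$---$G$ only contains the direct \emph{sum}, not the full product. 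So I cannot directly take $P\cap G$ and call it closed zero-dimensional in $G$ of the right size. Instead I would work with a countable subset of indices: choose a countably infinite $I_0\subseteq I$ and consider $D=\bigoplus_{i\in I_0}\langle y_i\rangle\subseteq G$. Its closure $\overline{D}$ in $G$ is a closed subgroup of $G$; I must show it is infinite and zero-dimensional (equivalently metrizable and non-discrete, which forces failure of \Zm). It is non-discrete because $D$ is infinite and $D\subseteq\overline{D}$, and were $\overline{D}$ discrete it would be closed-and-discrete hence equal to $D$, but a discrete subgroup of the Hausdorff group $K$ is closed in $K$, so $D$ would be closed in $K$; however the closure of $D$ in $K$ contains the whole product $\prod_{i\in I_0}\langle y_i\rangle$, which is strictly larger than $D$---contradiction. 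So $\overline{D}$ is non-discrete.

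For zero-dimensionality (equivalently, metrizability suffices since a metrizable non-discrete subgroup already kills \Zm): the closure of $D$ in $K$ is contained in $\prod_{i\in I_0}\langle y_i\rangle$, which is a compact metrizable group; hence $\overline{D}^{K}$ is compact metrizable zero-dimensional, and $\overline{D}=\overline{D}^{G}=\overline{D}^{K}\cap G$ is a subgroup of a metrizable group, hence metrizable, and it is zero-dimensional as a subspace of the zero-dimensional space $\prod_{i\in I_0}\langle y_i\rangle$. Thus $\overline{D}$ is an infinite closed metrizable zero-dimensional subgroup of $G$, contradicting \Zm. This contradiction proves the lemma.

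\textbf{Main obstacle.} The one genuinely non-routine point is the first step: extracting from each non-trivial $G\cap L_i$ a non-trivial \emph{torsion} element (or at least a non-trivial element generating a topologically cyclic subgroup with finite or metrizable-totally-disconnected closure). This rests on the structure of closed subgroups of compact Lie groups---the identity component of such a subgroup is a torus and the quotient by it is finite---and it is where I would be most careful, since if $x_i$ had infinite order and generated a dense subgroup of a positive-dimensional torus, the naive closure would not be zero-dimensional. The fix is to observe that such a torus is divisible and splits off (Lemma~\ref{T:is:direct:summand} handles the abelian/compact splitting), so inside $\overline{\langle x_i\rangle}\cap G$ one can still locate a non-trivial element of finite order, e.g. using that $\overline{\langle x_i\rangle}$ has non-trivial $2$-torsion and that $2$-torsion elements of $G\cap L_i$ lie in $G$; more robustly, one simply replaces $G$ by its closed subgroup $G\cap\overline{\langle x_i\rangle}$ and applies the Lie-subgroup dichotomy to conclude it contains a non-trivial torsion element unless it is already an infinite metrizable zero-dimensional subgroup, in which case \Zm\ is already violated and we are done. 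Everything after that is bookkeeping with direct products and closures.
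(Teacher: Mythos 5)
Your overall architecture matches the paper's: argue by contradiction, extract a non-trivial torsion element of $G\cap L_i$ for each $i$, and assemble these into a non-discrete closed zero-dimensional metric subgroup of $G$. The second half of your argument (forming the direct sum $D=\bigoplus_{i\in I_0}\langle y_i\rangle$ and showing its closure in $G$ is infinite, non-discrete, metrizable and zero-dimensional) is sound and is essentially the paper's bookkeeping, done with $\overline{D}^{G}$ instead of $P\cap G$.

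The gap is exactly at the point you flag as the main obstacle, and neither of your proposed fixes closes it. The true statement ``every non-trivial closed subgroup of a compact Lie group contains a non-trivial torsion element'' applies to $\overline{\langle x_i\rangle}$, but the torsion it produces need not lie in $G$; your first fix tacitly assumes it does. Your second fix asserts a dichotomy: $G\cap\overline{\langle x_i\rangle}$ either contains non-trivial torsion or is an infinite metrizable zero-dimensional subgroup. This dichotomy is false. Take $\overline{\langle x_i\rangle}\cong\T^2$ (so $x_i$ topologically generates a $2$-torus inside $L_i$) and suppose $G\cap\T^2$ is a dense one-parameter subgroup, i.e.\ the image of $t\mapsto(t\beta,t\gamma)\bmod\Z^2$ with $\beta/\gamma$ irrational. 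This group is torsion-free, connected (hence very far from zero-dimensional), dense and proper, so it falls into neither horn of your dichotomy. It does in fact fail \Zm\ --- its intersection with $\T\times\{0\}$ is a non-discrete countable closed subgroup --- but detecting this requires intersecting with coordinate subtori and inducting on dimension, which is precisely the content of Theorem \ref{R^n} and Corollary \ref{subgroups:of:Lie:having:Zm}; Lemma \ref{T:is:direct:summand}, which only splits an already-present subtorus off a compact group, does not reach it. The paper's route is: $G\cap\overline{\langle x_i\rangle}$ is a closed subgroup of $G$, hence satisfies \Zm, hence by Corollary \ref{subgroups:of:Lie:having:Zm} it is \emph{closed} in the compact abelian Lie group $\overline{\langle x_i\rangle}$; being a non-trivial compact Lie group, it then genuinely contains non-trivial torsion. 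You should invoke that corollary (or reprove its $\T^d$-induction) at this step; as written, your argument does not rule out the torsion-free connected case.
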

\begin{proof}
Suppose that $G\cap L_i\not=\{e_{L_i}\}$ for infinitely many $i\in I$.
By trimming the set $I$, we  may assume without loss of generality that 
$G\cap  L_i\not=\{e_{L_i}\}$ for every $i\in I$.

Let $i\in I$ be arbitrary. By our assumption, we can choose $g_i\in G_i$ with $g_i\not=e_{L_i}$.
Let $H_i$ be the closure in $K$ of the cyclic subgroup $\langle g_i\rangle$ of $K$ generated by $g_i$.
Since $g_i\in G\cap L_i\subseteq L_i$ and $L_i$ is a closed subgroup of $K$, 
it follows that $H_i\subseteq L_i$. 
Since $H_i$ is the closure of a cyclic group, $H_i$ is abelian.
Being a closed subgroup of the Lie group $L_i$, $H_i$ is a Lie group.
Since $H_i$ is a closed subgroup of $K$, $G_i=G\cap H_i$ is a closed subgroup of $G$. 
Since $G$ satisfies \Zcm, so does its closed subgroup $G_i$. 
Applying Corollary \ref{subgroups:of:Lie:having:Zm}, we deduce that 
$G_i$ is closed in $H_i$. Since $H_i$ is closed in $K$, so is $G_i$.
From this and $G_i\subseteq L_i$, we conclude that $G_i$ is closed in $L_i$.
Since the latter group is a compact Lie group, so is $G_i$.  
We proved that $G_i=G\cap L_i$ is a compact Lie group.
Since $g_i\in G_i$ and $g_i\not= e_{L_i}$, $G_i$ is a non-trivial compact Lie group.
In particular, $G_i$ contains a non-trivial torsion element $x_i$.
Let $C_i$ be the finite cyclic subgroup of $G_i$ generated by $x_i$.  

Clearly, $P=\prod_{i\in I} C_i$ is a compact metric zero-dimensional subgroup of 
$\prod_{i\in I} L_i=R$, as this product carries the Tychonoff product topology.
Since $R$ is a subgroup of $K$, it follows that 
$N=P\cap G$ is a closed metric zero-dimensional subgroup of $G$.
Note that $C=\bigoplus_{i\in I} C_i\subseteq P\cap G=N$.
Since $C$ is dense in a non-discrete group $P$, $C$ is non-discrete.
From this and $C\subseteq N$, we conclude that $N$ is non-discrete as well.
We have found a non-discrete closed metric zero-dimensional subgroup $N$ of $G$, in contradiction with our assumption that $G$ satisfies \Zm. 
\end{proof}

\begin{corollary}
\label{completion:has:no:infinite:products}
If the completion $K$ of a locally minimal group $G$ contains a subgroup topologically isomorphic to an infinite product of non-trivial compact Lie normal subgroups of $K$, then 
$G$ does not satisfy \Zm.
\end{corollary}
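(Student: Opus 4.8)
The plan is to derive the failure of \Zm\ by combining the local essentiality of $G$ in its completion with Lemma~\ref{getting:Zm:violating:subgroups:in:G}. Fix a subgroup $R$ of $K$ together with a topological isomorphism $R\cong\prod_{i\in I}L_i$, where $I$ is infinite and each $L_i$ is a non-trivial compact Lie normal subgroup of $K$; being compact, each $L_i$ is closed in $K$. Since $G$ is dense in its completion $K$ and is locally minimal, Fact~\ref{minimality-criterion} guarantees that $G$ is locally essential in $K$: there is a neighbourhood $U$ of $e_K$ such that $G$ meets non-trivially every non-trivial closed normal subgroup of $K$ contained in $U$.

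The first step is to check that all but finitely many of the factors $L_i$ lie inside $U$. Identifying $R$ with $\prod_{i\in I}L_i$ (so that $R$ carries the Tychonoff product topology as a subspace of $K$), the trace $U\cap R$ is a neighbourhood of the identity of $R$, hence contains a basic open set of the form $W=\prod_{i\in F}V_i\times\prod_{i\in I\setminus F}L_i$ for some finite $F\subseteq I$ and some open neighbourhoods $V_i$ of $e_{L_i}$. For every $i\in I\setminus F$ the coordinate copy of $L_i$ inside $R$ is then contained in $W\subseteq U$.

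The second step is immediate: for each $i\in I\setminus F$, the group $L_i$ is a non-trivial closed normal subgroup of $K$ contained in $U$, so local essentiality yields $G\cap L_i\neq\{e_{L_i}\}$. Since $I$ is infinite and $F$ is finite, this happens for infinitely many indices $i$. Applying the contrapositive of Lemma~\ref{getting:Zm:violating:subgroups:in:G} to the infinite product $R=\prod_{i\in I}L_i$ of compact Lie groups sitting inside $K$, we conclude that $G$ does not satisfy \Zm, as required.

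I do not anticipate a genuine obstacle; the one place calling for a little care is the passage from the abstract hypothesis ``$R$ is topologically isomorphic to $\prod_{i\in I}L_i$'' to the concrete statement that cofinitely many coordinate subgroups $L_i$ are trapped inside the fixed neighbourhood $U$, which is exactly where the product topology on $R$ enters.
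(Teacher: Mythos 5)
Your argument is correct and follows essentially the same route as the paper's proof: invoke Fact~\ref{minimality-criterion} to get local essentiality of $G$ in $K$, observe that cofinitely many coordinate factors $L_i$ fall inside the witnessing neighbourhood because the product topology on $R$ makes a cofinite subproduct a basic neighbourhood of the identity, conclude $G\cap L_i\neq\{e_{L_i}\}$ for infinitely many $i$, and finish with (the contrapositive of) Lemma~\ref{getting:Zm:violating:subgroups:in:G}. The one point you flag — identifying the abstract product with the actual coordinate subgroups of $K$ — is handled in the paper by exactly the same ``natural identification.''
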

\begin{proof}
Assume the contrary, and let 
$\{L_j:j\in J\}$ 
be an infinite family of non-trivial compact Lie groups such that each $L_j$ is a normal subgroup of $K$ and $\prod_{j\in J} L_j\cong R$ for some subgroup $R$ of $K$.
Since $G$ is locally minimal, it is locally essential in $K$ by Fact
\ref{minimality-criterion}.
Since $G$ is locally essential in $K$, there exists a  neighborhood $W$ of the identity $1$ in $K$ such that  $G\cap N\not=\{1\}$ for every  non-trivial closed normal subgroup $N$ of $K$ with $N\subseteq W$. There exists a finite set 
$S\subseteq J$
such that $\prod_{i\in I} L_i
\subseteq W\cap R$,
where $I=J\setminus S$.
(Here we use the natural identification of the product $\prod_{i\in I} L_i$ with the subgroup of the whole product $R=\prod_{i\in J} L_i$.)

For every $i\in I$, $L_i$ is non-trivial closed normal subgroup of $K$ contained in $W$, so $G\cap L_i\not=\{e_{L_i}\}$ by the choice of $W$.
Since the set $I$ is infinite, 
$G$ does not have property \Zm\ by
Lemma \ref{getting:Zm:violating:subgroups:in:G}.
This contradicts our assumption.
\end{proof}

\begin{theorem}
\label{thm:completion:is:a:Lie:group}
If $G$ is an almost connected, locally minimal, precompact group satisfying \Zm, then the completion $K$ of $G$ is a compact Lie group.
\end{theorem}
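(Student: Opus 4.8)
The plan is to argue entirely inside the completion $K$ of $G$, which is compact because $G$ is precompact. First I would check that $K$ is almost connected: since $c(G)$ is open in $G$ and $G$ is dense in $K$, its closure in $K$ is an open subgroup of $K$ (the closure of an open subgroup of a dense subgroup is open in the ambient group), and it is connected as the closure of a connected set, hence it equals $c(K)$, which is therefore open. Since $G$ is locally minimal and dense in $K$, Fact \ref{minimality-criterion} gives that $G$ is locally essential in $K$; I fix a neighbourhood $W$ of the identity $e$ of $K$ such that $G$ meets every non-trivial closed normal subgroup of $K$ contained in $W$.

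The crucial intermediate step is to show that $Z(K)$ is a Lie group, so that the structure theory becomes applicable. Suppose it is not. Then $Z(K)$ is a compact abelian group that is not a Lie group, so by the contrapositive of Lemma \ref{exotic:tori:lemma} it fails \Zcm\ and therefore contains an infinite compact metrizable zero-dimensional subgroup $P$. As a metrizable profinite group, $P$ admits a decreasing sequence $\{P_n:n\in\N\}$ of open subgroups forming a base at $e$ with $\bigcap_n P_n=\{e\}$; each $P_n$ is infinite (being open, hence of finite index, in the infinite group $P$), closed in $K$, and normal in $K$ (being contained in the centre $Z(K)$). Pick $n_0$ with $P_{n_0}\subseteq W$. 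For every $n\ge n_0$ the group $P_n$ is a non-trivial closed normal subgroup of $K$ contained in $W$, so local essentiality yields $g_n\in(G\cap P_n)\setminus\{e\}$; since $g_n\to e$ and $g_n\ne e$, the set $\{g_n:n\ge n_0\}$ is infinite. Hence $N=G\cap P_{n_0}$ is an infinite subgroup of $G$, closed in $G$ (because $P_{n_0}$ is closed in $K$) and metrizable and zero-dimensional (being a subspace of $P$). This contradicts the hypothesis that $G$ satisfies \Zm, so $Z(K)$ is a Lie group.

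Now I would finish by contradiction: assume $K$ is not a Lie group. Then $K$ is an almost connected compact group with Lie centre that is not a Lie group, so Theorem \ref{almost:connected} applies and shows that $c(K)'$ is topologically isomorphic to an infinite product of non-trivial closed normal Lie subgroups of $K$; since $K$ is compact these subgroups are compact. As $c(K)'$ is a subgroup of $K$, it follows that $K$ contains a subgroup topologically isomorphic to an infinite product of non-trivial compact Lie normal subgroups of $K$. By Corollary \ref{completion:has:no:infinite:products} this forces $G$ not to satisfy \Zm, contradicting our hypothesis. Therefore $K$ is a Lie group, and being compact it is a compact Lie group.

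I expect the proof of ``$Z(K)$ is a Lie group'' to be the only genuinely new work, and hence the main obstacle: local essentiality only constrains normal subgroups of $K$, so a generic infinite compact metrizable zero-dimensional subgroup of $K$ provides no leverage against \Zm; it is precisely by passing to the centre, every subgroup of which is automatically normal in $K$, that the argument can be pushed through, after which the structural input — Theorem \ref{almost:connected} on almost connected compact groups with Lie centre, together with Corollary \ref{completion:has:no:infinite:products} — does the remaining work.
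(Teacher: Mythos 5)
Your proof is correct, and its endgame coincides with the paper's: both establish that $Z(K)$ is a Lie group and then combine Theorem \ref{almost:connected} with Corollary \ref{completion:has:no:infinite:products} to rule out the possibility that $K$ fails to be Lie. Where you genuinely diverge is in the proof of the key intermediate claim that $Z(K)$ is a Lie group. The paper first shows that $Z(G)$ is a \emph{compact} Lie group by applying Theorem \ref{thm:C} to $Z(G)$ (a closed central, hence locally minimal, subgroup of $G$), then splits $Z(K)=\T^d\times B$ via Lemma \ref{T:is:direct:summand} and uses local essentiality to prove that $B$ is NSS. You instead argue directly on $Z(K)$: if it were not Lie, Lemma \ref{exotic:tori:lemma} (in contrapositive) produces an infinite compact metrizable zero-dimensional subgroup $P\subseteq Z(K)$, whose open subgroups $P_n$ are automatically \emph{normal} in $K$ by centrality, so local essentiality plants nontrivial elements $g_n\in G\cap P_n$ converging to the identity and makes $G\cap P_{n_0}$ a non-discrete closed zero-dimensional metric subgroup of $G$, contradicting \Zm. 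This is in effect the argument of Lemma \ref{Z:comp:extends:to:completion} transplanted to the centre of a non-abelian completion, and your observation that centrality is exactly what converts ``normal subgroups of $K$'' (the only thing local essentiality controls) into ``all subgroups'' is the right one. Your route is shorter and bypasses Theorem \ref{thm:C}, Lemma \ref{T:is:direct:summand} and the NSS argument entirely; the paper's route yields the extra structural information $Z(K)\cong\T^d\times B$ with $B$ NSS, which is not needed for the theorem. One cosmetic point: to contradict \Zm\ you need $G\cap P_{n_0}$ to be \emph{non-discrete} rather than merely infinite; this is immediate from $g_n\to e$, $g_n\neq e$, which you do establish, so it is only a matter of phrasing.
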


\begin{proof} The proof of this theorem is split into three claims.

\begin{claim}
\label{Z(G):is:a:compact:Lie:group}
$Z(G)$ is a compact Lie group.
\end{claim}

\begin{proof} Note that $Z(G)$ is locally minimal, as a closed central subgroup of a locally minimal group \cite{ACDD}. Furthermore, $Z(G)$  satisfies \Zm,  because this property is preserved by taking closed subgroups.  Now Theorem \ref{thm:C} implies that $Z(G)$ is a Lie group.  Since $G$ is precompact, so is $Z(G)$. Since Lie groups are complete, $Z(G)$ is compact. 
\end{proof}

Since $G$ is precompact, its completion $K$ is compact. Since $G$ is locally minimal, $G$ is locally essential in $K$ by  Fact \ref{minimality-criterion}.

\begin{claim}
\label{Z(K):is:a:Lie:group}
$Z(K)$ is a Lie group. 
\end{claim}

\begin{proof} Since $G$ is dense in $K$, the equality $Z(G)=G\cap Z(K)$ holds; in particular, $Z(G)\subseteq Z(K)$.
Since $Z(G)$ is an abelian compact Lie group by Claim \ref{Z(G):is:a:compact:Lie:group}, we can write $Z(G)=\T^d \times F_0$, 
where $d \in \N$ and $F_0$ is a finite abelian group. Then the compact abelian group $Z(K)$ contains a 
subgroup topologically isomorphic to $\T^d$. 

Applying Lemma \ref{T:is:direct:summand} to $A=Z(K)$, we can find a closed subgroup $B$ of $Z(K)$ such that $Z(K) = \T^d \times B$. Since $\T^d\times \{0\}$ is a (finite-index) subgroup of $Z(G)$, one can write $Z(G) = \T^d \times F$, where $F = B\cap Z(G) \cong Z(G)/\T^d = F_0$ is finite. 

Since $G$ is locally essential in $K$, there exist a neighborhood $U_1$ of 0 in $\T^d$ and  a neighborhood $U_2$ of 0 in $B$ such that  $G$ has non-trivial intersection with every closed non-trivial subgroup of $Z(K)$ contained in $U=U_1 \times U_2$. Since $F$ is finite, we may assume, without loss of generality, that $U_2\cap F = \{0\}$. 

We are going to show that $B$ is NSS.  Indeed, let $V$ be a neighborhood of 0 in $B$ whose closure in $B$ is  contained in $U_2$.  Suppose that $H$ is a non-trivial subgroup contained in $V$. Then the closure $N$ of $H$ in $B$ is a non-trivial closed subgroup of $B$ contained in $U_2$. Therefore, $\{0\}\times N$ is a non-trivial  closed subgroup of $\T^d \times B=Z(K)$ contained in $U$. By our choice of $U$, the intersection  $(\{0\}\times N)\cap G$ must be non-trivial. On the other hand,
$$
(\{0\}\times N)\cap G=(\{0\}\times N)\cap(\T^d\times F) =\{0\}\times (N\cap F)\subseteq \{0\}\times (U_2\cap F)=\{0\}\times\{0\}.
$$
This contradiction shows that $V$ contains no non-trivial subgroups of $B$.  Thus, $B$ is NSS.

Being a compact NSS group, $B$ is a Lie group. Then $Z(K)=\T^d\times B$ is a Lie group as well. 
\end{proof}

\begin{claim}
\label{K:is:Lie:claim}
$K$ is a Lie group.
\end{claim}

\begin{proof} 
Since $K$ contains a dense 
almost connected subgroup $G$, $K$ is also almost connected.
By Claim \ref{Z(K):is:a:Lie:group}, $Z(K)$ is a Lie group. 
If $K$ is a not a Lie group, then
Theorem \ref{almost:connected} allows us to conclude that
$c(K)'$ is topologically isomorphic to an infinite product 
of 
non-trivial closed (thus, compact) normal 
Lie subgroups 
of $K$.
Now 
Corollary \ref{completion:has:no:infinite:products} implies that $G$ does not satisfy property \Zm.
This contradiction shows that $K$ is a Lie group.
\end{proof}

The proof of Theorem \ref{thm:completion:is:a:Lie:group} is complete.
\end{proof}

\medskip
\noindent{\bf Proof of Theorem \ref{last:theorem}:}  The implications 
(i)$\to$(ii)$\to$(iii)
follow from Proposition  \ref{arrows:between:Z}.
According to Proposition \ref{locally:precompact:sequentially:complete}, the conditions \Zm\ and \Zcm\ are equivalent for precompact, sequentially complete groups. Therefore, (iii)$\leftrightarrow$(iv).
To prove the implication (iii)$\to$(i),
assume that $G$ is an almost connected, locally  minimal, precompact sequentially complete group satisfying \Zm.
By Theorem \ref{thm:completion:is:a:Lie:group},
the completion $K$ of $G$ is a compact Lie group.
Since $G$ is sequentially complete, it is sequentially closed in $K$.
Since $K$ is metric, $G$ is closed in $K$. Since $G$ is also dense in $K$, we conclude that $G=K$. Therefore, $G$ is a compact Lie group. 
\hfill $\Box$

\section{Compact-like examples distinguishing Lie, \Zgen, \Zm\ and \Zcm}
\label{compact-like:examples}

In this section we exhibit a series of compact-like examples showing that the arrows in \eqref{Z:implications} are not reversible.

To construct our first example, we shall need the notion of an HFD set. Recall that a subset $G$ of $\T^{\omega_1}$ is called an {\em HFD set\/} (an abbreviation for hereditarily finally dense) provided that for every countably infinite subset $X$ of $G$ one can find an ordinal $\alpha<\omega_1$ such  that $q_\alpha(X)$ is dense in $\T^{\omega\setminus\alpha}$, where $q_\alpha: \T^{\omega_1}\to \T^{\omega_1\setminus\alpha}$ be the natural projection defined by 
$q_\alpha(h)=h\restriction_{\omega_1\setminus\alpha}$ for $h\in \T^{\omega_1}$.

It is well known that every HFD subset of $\T^{\omega_1}$ is hereditarily separable, \cc, connected, locally connected and does not contain any non-trivial convergent sequences. Our next proposition adds one more item to the list of the known properties of HFD subsets of  $\T^{\omega_1}$.

\begin{proposition}
\label{HFD:subgroups:are:Z}
An HFD subgroup $G$ of $\T^{\omega_1}$ contains no infinite closed zero-dimensional subgroups;
in particular, $G$ satisfies \Zgen.
\end{proposition}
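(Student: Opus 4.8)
The plan is to use Proposition \ref{trivial:ways}(i) together with the fact that HFD subsets of $\T^{\omega_1}$ contain no non-trivial convergent sequences. First I would recall that every HFD subgroup $G$ of $\T^{\omega_1}$ is countably compact and has no non-trivial convergent sequences. Now let $N$ be an infinite closed zero-dimensional subgroup of $G$. Being closed in the countably compact group $G$, the subgroup $N$ is itself countably compact; being also infinite, $N$ is not discrete. Since $N$ is zero-dimensional, $N$ has a proper clopen subgroup, and iterating, $N$ has a strictly decreasing sequence of clopen subgroups, hence a non-trivial sequence converging to the identity — contradicting the absence of non-trivial convergent sequences in $G$. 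This handles the first assertion; the ``in particular'' clause then follows because every closed zero-dimensional subgroup of $G$ must be finite, hence discrete.

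More carefully, the key step is producing the convergent sequence inside an infinite zero-dimensional countably compact group $N$. Since $N$ is infinite and countably compact, it is not discrete, so the identity $e$ is not isolated. By zero-dimensionality, for every neighborhood $U$ of $e$ there is a clopen set containing $e$ and contained in $U$; intersecting with suitable translates one obtains (as in van Dantzig's argument for the locally compact case, but here more elementarily) a neighborhood base at $e$ consisting of clopen subgroups — or, even without subgroups, one can directly extract points $x_n \to e$ with $x_n \neq e$ using that $N$ is non-discrete and has a clopen base at $e$: pick $x_1 \in V_1 \setminus \{e\}$ for a proper clopen $V_1 \ni e$, then a clopen $V_2$ with $x_1 \notin V_2 \subseteq V_1$, pick $x_2 \in V_2 \setminus \{e\}$, and so on. The resulting sequence $(x_n)$ converges to $e$ and is non-trivial, which is the desired contradiction.

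I expect the only subtle point to be justifying that an infinite countably compact zero-dimensional group is non-discrete and has the clopen neighborhood base at $e$ needed to run the extraction — but non-discreteness is immediate (an infinite discrete group is not countably compact), and the clopen base at $e$ is exactly zero-dimensionality combined with homogeneity of topological groups. Everything else is citing the standard properties of HFD sets (countable compactness and absence of non-trivial convergent sequences) already recalled before the statement. Finally, for the ``in particular'' part: a finite group is trivially discrete, so $G$ satisfies \Zm, and since $G$ is zero-dimensional and the only closed zero-dimensional subgroups that could fail discreteness would be infinite, $G$ in fact satisfies \Zgen.
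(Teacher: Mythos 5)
There is a genuine gap at the key step: the extraction of a non-trivial convergent sequence from the infinite zero-dimensional countably compact group $N$ does not work. Your nested clopen sets $V_1\supseteq V_2\supseteq\cdots$ need not form a neighbourhood base at $e$ --- $N$ is in general not first countable --- so nothing forces $x_n\to e$; the intersection $\bigcap_n V_n$ can contain many points besides $e$, and the $x_n$ can stay outside a fixed neighbourhood of $e$ forever. Concretely, in $\{0,1\}^{\omega_1}$ take $V_n=\{x: x(i)=0 \mbox{ for } i<n\}$ and let $x_n$ be the characteristic function of $[n,\omega_1)$; then $x_n\in V_n\setminus\{0\}$ and $x_{n-1}\notin V_n$, but no $x_n$ lies in the clopen neighbourhood $\{x: x(\omega)=0\}$ of $0$, so $(x_n)$ does not converge to $0$. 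Worse, the general principle you are implicitly invoking --- that an infinite countably compact zero-dimensional group must contain a non-trivial convergent sequence --- is consistently false: under MA, van Douwen's Boolean countably compact group without non-trivial convergent sequences \cite{vD} (see Example \ref{cc:example:distinguishing:Zs}(ii)) is precisely an infinite zero-dimensional countably compact group with no such sequences. This is exactly why Proposition \ref{trivial:ways}(i) yields only \Zm\ (metrizability of the subgroup is what makes ``non-discrete implies contains a non-trivial convergent sequence'' valid) and not \Zgen.

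The HFD hypothesis therefore has to be used for more than countable compactness and the absence of convergent sequences. The paper's argument takes the closure $K$ of $N$ in $\T^{\omega_1}$, observes that $\dim K=\dim N=0$ by Tkachenko's theorem \cite{Tka2} (this is where countable compactness of $N$ enters), and then applies the HFD property to a countably infinite subset $X$ of $N$ to produce $\alpha<\omega_1$ with $q_\alpha(X)$ dense in $\T^{\omega_1\setminus\alpha}$, whence $q_\alpha(K)=\T^{\omega_1\setminus\alpha}$ is connected --- contradicting the zero-dimensionality of the compact group $K$. Your outline cannot be repaired without some such use of the tail projections; reducing to a metrizable situation, which would rescue the sequence argument, is not available here.
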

\begin{proof}
Suppose that $N$ is an infinite closed zero-dimensional subgroup of $G$. Since $N$ is a closed subgroup of the \cc\ group $G$, it is countably compact as well. Since $G$ is zero-dimensional, it is strongly zero-dimensional;  that is, $\dim N=0$. (This follows from \cite{Sh}).

Let $K$ be the closure of $N$ in $\T^{\omega_1}$. Since $N$ is \cc,  $\dim K=\dim N=0$ by Tkachenko's theorem \cite{Tka2}. In particular, $K$ is zero-dimensional.

Fix a countably infinite subset $X$ of $N$. Since $G$ is an HFD subset of $\T^{\omega_1\setminus\alpha}$, there exists  an ordinal $\alpha<\omega_1$ such that $q_\alpha(X)$ is dense in $\T^{\omega_1\setminus\alpha}$. Since $K$ is a compact group containing $X$, it follows that 
$q_\alpha(K)$ is a compact group containing $q_\alpha(X)$. Since $q_\alpha(K)$ is closed in $\T^{\omega_1\setminus\alpha}$
and $q_\alpha(X)$ is dense in $\T^{\omega_1\setminus\alpha}$,  it follows that $q_\alpha(K)=\T^{\omega_1\setminus\alpha}$. 

Since $K$ is a zero-dimensional compact group, its image $q_\alpha(K)$ under continuous homomorphism $q_\alpha$ is zero-dimensional.
On the other hand, $q_\alpha(K)=\T^{\omega_1\setminus\alpha}$ is connected. This contradiction shows that $G$ contains no infinite closed zero-dimensional subgroup $N$. Therefore, all closed zero-dimensional subgroups of $G$ are finite.
Since finite groups are 
discrete,
$G$ satisfies \Zgen.
\end{proof}

\begin{example}
\label{Tkachenko:example}
{\em Under the Continuum Hypothesis, there exists a non-trivial countably compact, connected, 
locally connected
(hereditarily separable) free
abelian group $G$ without non-trivial convergent sequences such that $\{0\}$ 
is the only closed zero-dimensional subgroup of $G$; in particular, $G$ satisfies \Zgen\ but is not Lie\/}. 
Indeed, Tkachenko \cite{Tka1} gave an example, under the Continuum Hypothesis, of an HFD subgroup $G$ of $\T^{\omega_1}$
algebraically isomorphic to the free abelian group of size $\cont$.
Being an HFD subset of $\T^{\omega_1}$, $G$ is hereditarily separable, \cc, connected,
locally connected
 and does not contain 
non-trivial convergent sequences. 
By Proposition \ref{HFD:subgroups:are:Z}, $G$ has no infinite closed zero-dimensional subgroups. Since $G$ is torsion-free, it does not have non-trivial finite subgroups either.
Therefore, $\{0\}$ 
is the only closed zero-dimensional subgroup of $G$, and so
$G$ trivially satisfies \Zgen.
Since $G$ contains no non-trivial convergent sequences,  $G$ is non-metrizable, and so cannot be a Lie group.
\end{example}

In Corollary \ref{Bohr:proposition} we exhibit a precompact group satisfying \Zm\ but failing \Zgen. In item (i) of our next example we outline a pseudocompact group with the same properties.  Item (ii) of the same example shows that, under additional set-theoretic axioms, one can even strengthen 
pseudocompactness to countable compactness in a counter-example to the implication  \Zm$\to$\Zgen.

\begin{example}
\label{cc:example:distinguishing:Zs}
(i)
Let $G$ be the dense pseudocompact subgroup of $\mathbb{Z}(2)^\cont$ 
without non-trivial convergent sequences constructed in \cite{Sirota}.  Clearly, $G$ is zero-dimensional and non-discrete.
It follows from Corollary \ref{non-discrete:group:without:convergent:sequences} that {\em  $G$ is a pseudocompact abelian group which satisfies \Zm\ but does not satisfy \Zgen\/}.

(ii)
{\em Under Martin's Axiom, there exists a countably compact abelian group which satisfies \Zm\ but does not satisfy \Zgen\/}. Indeed, let $G$ be an infinite 
Boolean \cc \ group without non-trivial convergent sequences built by van Douwen under the assumption of MA \cite{vD}. Since $G$ is a \cc\ group of finite exponent, $G$ is zero-dimensional; see \cite{Dzero}. The rest follows from Corollary \ref{non-discrete:group:without:convergent:sequences}.
\end{example}

This example shows that ``$\omega$-bounded" cannot be weakened to ``\cc'' in 
Theorem \ref{thm:AB} and Corollary \ref{characterizing:by:local:omega-boundedness},
even in the ``global'' version.

\begin{example}
\label{ex:Tk2012}
{\em There exists a pseudocompact abelian group satisfying \Zcm\ which does not satisfy \Zm\/}.
Indeed, let $G$ be a pseudocompact abelian group of cardinality $\cont$ such that $G$ contains an infinite cyclic metrizable subgroup $N$ and all countable subgroups of $G$ are closed; such a group is constructed in \cite[Theorem 2.8]{Tk2012}. Since $N$ is countable, it is a closed zero-dimensional subgroup of $G$.
Since $N$ is metrizable, $G$ does not satisfy \Zm. It follows from 
\cite[Corollary 2.7]{Tk2012} that all 
compact
subgroups of $G$ are finite. Thus, $G$ satisfies \Zcm\ 
by Proposition \ref{trivial:ways}(ii).
\end{example}

Proposition \ref{locally:precompact:sequentially:complete} shows that pseudocompactness of Example \ref{ex:Tk2012} cannot be strengthened to its countable compactness.

All examples constructed so far are abelian. We shall now produce a series of examples which are minimal groups $G$ close to being abelian (actually, they are nilpotent of class two, i.e., $G/Z(G)$ is abelian). In order to do so, 
we shall pass abelian examples constructed so far through the ``minimization machine'' 
developed in\cite[Lemma 5.16]{DM}.
The following lemma is extracted from this reference.

\begin{lemma}
\label{L_X:definition}
Let $X$ be an infinite precompact abelian group 
and let $m\not=1$ be its exponent.
Let $K=\T[m]=\{x\in \T: mx=0\}$. 
The discrete Pontryagin dual $D= \widetilde{X}^{\wedge}$  of $X$ acts on $K \times X$ via automorphisms $(t, x) \mapsto (t + \chi(x), x)$ for $(t,x) \in K \oplus X$ and $\chi \in  D$, and the resulting semi-direct product 
$$
L_X=(K \times X) \leftthreetimes D
$$ 
has
the following properties: 
\begin{itemize}
\item [(i)] $L_X$ is a minimal group;
\item [(ii)] $K \times X$ is an open subgroup of $L_X$,  so $L_X$ is locally precompact;
\item[(iii)] $X$ is a closed subgroup of $L_X$;
\item [(iv)]  $Z(L_X ) = K$ and $L_X/Z(L_X) \cong X \times D$, so $L_X$ is nilpotent of class two;
\item [(v)]  if $X$ is connected, then $m=0$ (hence, $K = \T[0]=\T$) and $c(L_X)= K \times X$.
\end{itemize}
\end{lemma}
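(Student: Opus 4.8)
The plan is to read every item off the semidirect-product structure, citing \cite[Lemma 5.16]{DM} for the one genuinely substantial point, namely minimality. First I would fix the topology of $L_X$: since $D$ is discrete, $L_X=(K\times X)\leftthreetimes D$ is the topological group in which the base $K\times X$ is an open (hence closed) subgroup carrying the product topology and $L_X$ is the topological sum of its $D$-many cosets; the only thing to check is that each $\chi\in D$ acts on $K\times X$ by a continuous automorphism, which holds because $\chi\restriction_X$ is a continuous character of $X$ and $m\chi(x)=\chi(mx)=0$, so $(t,x)\mapsto(t+\chi(x),x)$ indeed maps $K\times X$ into itself. Item (ii) is then immediate: $K\times X$ is open by construction and precompact, being the product of the compact group $K=\T[m]$ and the precompact group $X$; hence $L_X$ is locally precompact. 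For (iii), identify $X$ with $\{0\}\times X$: it is closed in $K\times X$ because $\{0\}$ is closed in the Hausdorff group $K$, and $K\times X$ is closed in $L_X$, so $X$ is closed in $L_X$.

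For (iv) I would compute with the product rule $((t_1,x_1),\chi_1)((t_2,x_2),\chi_2)=((t_1+t_2+\chi_1(x_2),\,x_1+x_2),\,\chi_1+\chi_2)$. Comparing $((t,x),\chi)((s,y),\psi)$ with the reversed product shows that $((t,x),\chi)$ is central if and only if $\chi(y)=\psi(x)$ for all $(s,y)\in K\times X$ and all $\psi\in D$; taking $\psi=0$ and using the density of $X$ in $\widetilde X$ forces $\chi=0$, and then $\psi(x)=0$ for all $\psi\in D$ forces $x=0$ by Pontryagin duality, so $Z(L_X)=K\times\{0\}\times\{0\}\cong K$. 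Since the cocycle term $\chi_1(x_2)$ lies in $K$, it disappears in the quotient by $K$, giving $L_X/Z(L_X)\cong X\times D$ with the product topology (the quotient map is open on the open subgroup $K\times X$); as $L_X$ is non-abelian---choose $x\in X$ and $\chi\in D$ with $\chi(x)\ne0$, possible because $X$ is infinite and the characters in $D$ separate the points of $\widetilde X$---it is nilpotent of class exactly two.

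For (i) the plan is to pass to the completion. As $K$ is compact (hence complete) and $D$ is discrete, the completion of $L_X$ is $\widetilde{L_X}=(K\times\widetilde X)\leftthreetimes D$, with $L_X$ dense in it, and by the (Prodanov--Stephenson) minimality criterion \cite{DPS} it suffices to prove that $\widetilde{L_X}$ is minimal and $L_X$ is essential in $\widetilde{L_X}$. The essentiality follows once the compact central subgroup $K$ is shown to be essential in $\widetilde{L_X}$: for a nontrivial closed normal subgroup $M$, pick a nontrivial $((t,x),\chi)\in M$; if $\chi\ne0$, choose $y\in\widetilde X$ with $\chi(y)\ne0$ and use $[((t,x),\chi),((0,y),0)]=((\chi(y),0),0)\in M\cap K\setminus\{0\}$; if $\chi=0$ but $x\ne0$, choose $\chi'\in D$ with $\chi'(x)\ne0$ and use $[((0,0),\chi'),((t,x),0)]=((\chi'(x),0),0)\in M\cap K\setminus\{0\}$; and if $\chi=0$, $x=0$ then already $t\ne0$ lies in $M\cap K$. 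Since $K\subseteq L_X$, $L_X$ is essential in $\widetilde{L_X}$. The remaining---and, I expect, only real---obstacle is the minimality of $\widetilde{L_X}$, a generalized Heisenberg group; this is exactly the content of \cite[Lemma 5.16]{DM}, and I would invoke it rather than reprove it. The mechanism is that a coarser Hausdorff group topology $\sigma$ must agree with the ambient topology on the compact subgroup $K\times\widetilde X$, after which $\sigma$-continuity of multiplication forces the pairing $(\chi,x)\mapsto\chi(x)$ from $D\times\widetilde X$ to $\T$ to be continuous at the origin, where $D$ carries the quotient topology induced by $\sigma$; since $\widetilde X$ is compact and a nontrivial character of a compact group is uniformly bounded away from the trivial one, this can happen only when $K\times\widetilde X$ is $\sigma$-open, i.e.\ when $\sigma$ is the ambient topology.

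Finally, for (v): if $X$ is connected then so is $\widetilde X$, which is therefore a compact connected abelian group and hence divisible; since $\widetilde X$ has the same exponent $m$ as $X$ (by density of $X$), an $m\ge2$ would force $\widetilde X=\{0\}$ and $X=\{0\}$, contradicting that $X$ is infinite. Thus $m=0$ and $K=\T[0]=\T$. Then $K\times X=\T\times X$ is connected, and by (ii) it is an open subgroup of $L_X$; an open connected subgroup equals the identity component, so $c(L_X)=K\times X$.
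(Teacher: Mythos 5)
Your proof is correct, but note that the paper itself gives no proof of this lemma at all: it is stated as ``extracted from'' \cite[Lemma 5.16]{DM}, and every item is delegated wholesale to that reference. What you supply is therefore strictly more than what the paper does. Items (ii)--(v) and the structural computations are all carried out correctly: the shear automorphisms preserve $K\times X$ because $m\chi(x)=\chi(mx)=0$; the center is exactly $K$ because $D$ separates the points of $\widetilde X$ and $X$ is dense in $\widetilde X$; the quotient by $K$ kills the cocycle, giving $X\times D$; and a connected infinite precompact group cannot have finite exponent $\ge 2$ since its compact connected completion would be a divisible group of bounded order, hence trivial. For (i) your reduction is also sound: the completion of $L_X$ is $(K\times\widetilde X)\leftthreetimes D$ (a group with a complete open subgroup is complete), your commutator computations show that every nontrivial closed normal subgroup of the completion meets the central copy of $K\subseteq L_X$, so $L_X$ is essential there, and the Prodanov--Stephenson criterion then reduces minimality of $L_X$ to minimality of the completion. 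That last ingredient --- minimality of the generalized Heisenberg group over the compact group $K\times\widetilde X$ --- is the one genuinely nontrivial point, and you, like the paper, ultimately lean on \cite[Lemma 5.16]{DM} for it; your closing sketch of why it holds (any coarser Hausdorff group topology agrees with the compact one on $K\times\widetilde X$, and a nontrivial character of a compact group cannot be uniformly close to the trivial one, which forces $K\times\widetilde X$ to remain open) is the right mechanism but is not developed far enough to stand on its own. Relative to the paper, which proves nothing here, that is not a defect.
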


The next two lemmas are inspired by and extending \cite[Lemma 5.16]{DM}.
The first lemma lists some of the properties which pass from $X$ to $L_X$ and vice versa.

\begin{lemma}
\label{passing:both:ways}
Let $X$ and $L_X$ be as in Lemma \ref{L_X:definition}. 
Then the following holds:
\begin{itemize}
\item [(i)] $L_X$ is a Lie group precisely when $X$ is a Lie group;
\item[(ii)] $L_X$ satisfies satisfies \Zgen\ if and only if $X$ 
satisfies \Zgen;
\item[(iii)] $L_X$ satisfies satisfies \Zm\ if and only if $X$ satisfies \Zm;
\item[(iv)] $L_X$ has property \Zcm\ if and only if $X$ has the same property;
\item [(v)] $L_X$ is (sequentially) complete precisely when $X$ is (sequentially) complete;
\item [(vi)]  $L_X$ has no non-trivial convergent sequences if and only if $X$ has no non-trivial convergent sequences and  $m>0$;
\item[(vii)] $L_X$ is locally connected if and only if $X$ is locally connected.
\end{itemize}
\end{lemma}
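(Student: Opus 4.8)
The plan is to prove each of the seven equivalences in Lemma \ref{passing:both:ways} by exploiting the structural description of $L_X=(K\times X)\leftthreetimes D$ from Lemma \ref{L_X:definition}, in particular the facts that $K\times X$ is an \emph{open} subgroup of $L_X$ (item (ii)), that $X$ is a \emph{closed} subgroup of $L_X$ (item (iii)), and that $Z(L_X)=K$ is a compact Lie group with $L_X/Z(L_X)\cong X\times D$ and $D$ discrete (item (iv)). The guiding principle is that $L_X$ and $X\times D$ (and hence $X$, since $D$ is discrete) share all the ``local'' properties in the list, because an open subgroup, a quotient by a compact Lie normal subgroup, and multiplication by a discrete factor each preserve local structure.

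First I would record a few general observations to be used repeatedly. (a) A topological group has a given property $\mathfrak P\in\{$Lie, \Zgen, \Zm, \Zcm, (sequential) completeness, no non-trivial convergent sequences, local connectedness$\}$ if and only if its open subgroups do, since open subgroups are clopen and the whole group is a disjoint union of their translates; this immediately reduces everything from $L_X$ to its open subgroup $K\times X$. (b) $K$ is a finite group when $m>0$ and $K=\T$ when $m=0$; in either case $K$ is a compact Lie group, and the projection $K\times X\to X$ has compact Lie kernel, so $K\times X$ and $X$ are locally homeomorphic, and in fact $K\times X$ is a Lie group iff $X$ is, is locally connected iff $X$ is, etc. (c) For a \emph{closed} subgroup one uses item (iii): since $X$ is closed in $L_X$, any closed zero-dimensional subgroup of $X$ is closed zero-dimensional in $L_X$, giving the ``only if'' directions of (ii), (iii), (iv) for free, while the ``if'' directions use the reduction in (a) together with the fact that closed zero-dimensional subgroups of $K\times X$ project to closed zero-dimensional subgroups of $X$ modulo the finite or circle factor $K$ --- and $\T$ has no nontrivial zero-dimensional subgroups that are closed, so in the connected case nothing is lost. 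For (v), completeness and sequential completeness pass to and from open subgroups, and $K\times X$ is (sequentially) complete iff both factors are, $K$ being compact.

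Then I would treat the items one at a time. Item (i) follows from (a) and (b): $L_X$ Lie $\iff K\times X$ Lie $\iff X$ Lie (the last step because $K$ is a compact Lie group, so adjoining it as a direct factor neither creates nor destroys the Lie property). Items (ii)--(iv): by (a) it suffices to compare $K\times X$ with $X$; the ``only if'' direction is (c) via closedness of $X$; the ``if'' direction: given a closed (metric) (compact metric) zero-dimensional subgroup $N\le K\times X$, its projection to $X$ is again closed (the kernel $K$ being compact), zero-dimensional, and metric/compact-metric as appropriate, and $N$ itself is an extension of a subgroup of $X$ by a subgroup of $K$; when $m>0$, $K$ is finite so $N$ is discrete/finite iff its image in $X$ is; when $m=0$ then $K=\T$ and a zero-dimensional subgroup of $\T\times X$ must meet $\T$ trivially (since $\T$ is connected and has no small subgroups, any zero-dimensional subgroup projects injectively off the $\T$ coordinate only if it avoids $\T$, which one checks directly), so again $N$ embeds into $X$ as a closed zero-dimensional subgroup of the same type. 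Item (v) is (a) plus ``a product with a compact group is (sequentially) complete iff the other factor is.'' Item (vi): again reduce to $K\times X$; a convergent sequence in $K\times X$ projects to convergent sequences in $K$ and in $X$; if $m>0$ then $K$ is finite, so the $K$-component is eventually constant and a non-trivial convergent sequence in $K\times X$ forces one in $X$ --- conversely a non-trivial convergent sequence in $X$ gives one in $\{0\}\times X$; if $m=0$ then $K=\T$ already has non-trivial convergent sequences, so $L_X$ does too, which is exactly why the condition $m>0$ appears. Item (vii): local connectedness passes to and from open subgroups, and $K\times X$ is locally connected iff $X$ is, since $K$ (finite or $\T$) is locally connected.

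The main obstacle I anticipate is the connected case $m=0$, $K=\T$ in items (ii)--(iv): one must argue carefully that a closed zero-dimensional subgroup $N$ of $\T\times X$ cannot ``use'' the $\T$ coordinate --- i.e.\ that $p_\T(N)$ is trivial or at least that $N$ is topologically isomorphic to a closed zero-dimensional subgroup of $X$ --- so that no spurious zero-dimensional subgroups are created by adjoining $\T$. The cleanest route is: $N\cap(\T\times\{0\})$ is a closed zero-dimensional subgroup of $\T$, hence finite; killing this finite subgroup changes nothing for our three properties, and then $N$ maps injectively into $X$ with closed zero-dimensional image of the same metric/compactness type, and the map is a topological embedding since $\T$ is compact. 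I would also double-check item (vi)'s ``$m>0$'' clause and item (v)'s sequential-completeness half, as these are the places where a careless reduction could go wrong. Everything else is a routine application of the open-subgroup and compact-direct-factor reductions.
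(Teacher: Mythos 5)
Your proposal is correct and follows essentially the same route as the paper: reduce every property from $L_X$ to its open subgroup $K\times X$, use that $X$ is closed in $L_X$ for one implication, and for the converse use that the projection $K\times X\to X$ is a closed map with compact kernel $K$ whose intersection with any closed zero-dimensional subgroup $N$ is finite, so that $p(N)\cong N/(N\cap K)$ inherits the relevant type. (Your parenthetical claim that a zero-dimensional subgroup of $\T\times X$ must meet $\T$ \emph{trivially} is false as stated --- it may meet it in a finite subgroup --- but the ``cleanest route'' you describe at the end, namely finite intersection followed by passing to the quotient, is exactly the paper's argument, so the final write-up would be sound.)
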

\begin{proof}
(i) Suppose that $X$ is a Lie group. Since $K$ is also a Lie group,
$K\times X$ is a Lie group. Since $K\times X$ is an open subgroup of $L_X$ by 
Lemma \ref{L_X:definition}(ii), it follows that $L_X$ is a Lie group. 
Conversely, suppose that $L_X$ is Lie group.
Since $X$ is a closed subgroup of $L_X$ by Lemma \ref{L_X:definition}(iii), it follows that $X$ is a Lie group.

(ii)--(iv) We shall prove these three items simultaneously.
Since $K \times X$ is an open subgroup of $L_X$ by Lemma \ref{L_X:definition}(ii), it is clear that $L_X$ satisfies  \Zgen \ (\Zm\ or \Zcm, respectively) if and only if $K \times X$ satisfies \Zgen\ (respectively, \Zm\ or \Zcm). Assume that $X$ satisfies \Zgen \ (\Zm\ or \Zcm, respectively) and let $N$ be a closed zero-dimensional (metrizable or compact metrizable, respectively) subgroup of $K \times X$.  Since $K$ is compact, the projection $p: K \times X \to X$ is closed. So $N_1 = p(N)$ is a closed subgroup of $X$.  Moreover, as $N$ is zero-dimensional, $\ker p = N \cap (K  \times  \{0\})$ is  a finite subgroup of $K  \times  \{0\}$. Since $N$ is zero-dimensional, this implies that  $N_1\cong N/\ker p$ is a closed zero-dimensional (metrizable, resp., compact metrizable) subgroup of $X$. Therefore, $N_1$ is finite. This proves that $N$ is finite. Hence, $L_X$ satisfies \Zgen (respectively, \Zm\ or \Zcm).

Conversely, if $L_X$ satisfies \Zgen\ (respectively, \Zm\ or \Zcm), then the closed subgroup $X$ of $L_X$ has the same property.

(v) The proof of (v) is similar to that of (i).

(vi) The proof of (vi) is similar to the proof of (ii)--(iv), after taking into account the fact that $K=\T[0]=\T$ has non-trivial convergent sequences.

(vii) Suppose that $X$ is locally connected. If $m=0$, then $K=\T[0]=\T$ is locally connected. If $m>0$, then $K=\T[m]$ is finite and so locally connected.
Since both $X$ and $K$ are locally connected, so is $K\times X$.
Since $K\times X$ is an open subgroup of $L_X$ by 
Lemma \ref{L_X:definition}(ii), it follows that $L_X$ is locally connected.
Suppose now that $L_X$ is locally connected. Then its open subgroup
$K\times X$ is locally connected. Then $X$ is locally connected as well.
\end{proof}

The second lemma lists some of the properties which pass from $X$ to $L_X$ in a local version.

\begin{lemma}
\label{from:global:to:local}
Let $X$, $K$ and $L_X$ be as in Lemma \ref{L_X:definition}, and let
$\mathfrak P$ be a property of topological groups stable under taking 
direct summands (=products) with $K$. If $X$ has property $\mathfrak P$, then $L_X$ is locally $\mathfrak P$.
In particular, 
\begin{itemize}
\item[(i)] if $X$ is (countably) compact, then $L_X$ is locally (countably) compact;
   \item [(ii)]  if $X$ is $\omega$-bounded, then $L_X$ is locally $\omega$-bounded; 
   \item [(iii)] if  $X$ is pseudocompact, then $L_X$ is locally pseudocompact.
\end{itemize}
\end{lemma}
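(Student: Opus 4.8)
The plan is to reduce the entire statement to a single structural fact already available, namely that by Lemma~\ref{L_X:definition}(ii) the subgroup $K\times X$ is \emph{open} in $L_X$. First I would note that an open subgroup of a topological group is automatically closed, so $K\times X$ coincides with its own closure in $L_X$ and is an open neighbourhood of the identity of $L_X$. Next, since $X$ has property $\mathfrak P$ and $\mathfrak P$ is, by hypothesis, stable under forming the product with $K$, the group $K\times X$ has property $\mathfrak P$. Taking $U=K\times X$ we obtain an open neighbourhood of the identity of $L_X$ whose closure (which is $U$ itself) has property $\mathfrak P$; by the definition of ``locally $\mathfrak P$'' this is exactly the desired conclusion. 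This settles the general assertion, and I do not expect any obstacle here: the only nontrivial input, openness of $K\times X$ in $L_X$, is supplied by Lemma~\ref{L_X:definition}.

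For items (i)--(iii) it then remains to check that compactness, countable compactness, $\omega$-boundedness and pseudocompactness are each stable under taking the product with $K$ (which is compact, being $\T$ when $m=0$ and finite when $m>0$), and to invoke the general statement with $\mathfrak P$ specialized to the property in question. For compactness this is Tychonoff's theorem. For countable compactness I would use that the first projection $X\times K\to X$ is a perfect map (closed, with compact fibres, because $K$ is compact) together with the fact that countable compactness is preserved under perfect preimages. For $\omega$-boundedness: given a countable set $A\subseteq X\times K$, its image under the first projection has compact closure $C$ since $X$ is $\omega$-bounded, hence $\overline A\subseteq C\times K$ is a closed subset of a compact space and therefore compact. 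For pseudocompactness: for a continuous $f\colon X\times K\to\R$, the function $x\mapsto\max_{t\in K}f(x,t)$ is well defined and continuous by compactness of $K$, hence bounded since $X$ is pseudocompact, and the same argument with $\min$ bounds $f$ from below, so $X\times K$ is pseudocompact. Among these the pseudocompact case is the one requiring the most care — one must recall the standard continuity argument for the parametrized maximum over a compact parameter space — while the remaining cases are immediate.
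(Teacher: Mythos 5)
Your proposal is correct and follows essentially the same route as the paper: both reduce the general claim to the openness of $K\times X$ in $L_X$ (Lemma~\ref{L_X:definition}(ii)) together with the stability hypothesis on $\mathfrak P$. The only difference is that you spell out the verifications for items (i)--(iii) (Tychonoff, perfect preimages, the $\omega$-bounded closure argument, and the parametrized maximum for pseudocompactness), all of which are correct, whereas the paper simply asserts that these standard properties are stable under products with the compact group $K$.
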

\begin{proof}
Assume 
that $X$ has property  $\mathfrak P$. Our assumption on  $\mathfrak P$ allows us to conclude that $X \times K$ has property  $\mathfrak P$ as well. 
Since $X \times K$ is an open subgroup of $L_X$
by Lemma \ref{L_X:definition}(ii),
$L_X$ is locally  $\mathfrak P$. 
Finally, it suffices to note that all properties  (compactness, $\omega$-boundedness, countable compactness and pseudocompactness) listed in items (i)--(iii)
satisfy our assumptions on $\mathfrak P$. 
\end{proof}

\begin{example}\label{loc_conn+min+loc_cc+Z-Lie}
{\em Under the Continuum Hypothesis, there exists a minimal, locally countably compact (so locally precompact and sequentially complete), 
locally connected
nilpotent group of class 2 (having no non-trivial convergent sequences) which satisfies \Zgen\ but is not Lie\/}.
Indeed, let $G$ be the group constructed in Example   
\ref{Tkachenko:example}. Then $L_G$ is the desired group.
The group $L_G$ is minimal 
and nilpotent of class 2 by items (i) and (iv) of 
Lemma \ref{L_X:definition}, respectively.
Since $G$ is countably compact, $L_G$ is locally countably compact by Lemma \ref{from:global:to:local}(i). 
The rest of the properties of $L_G$ follow from the corresponding properties of $G$ via items (i), (ii), (vi) and (vii) of Lemma \ref{passing:both:ways}.
\end{example}

 This example is to be compared to Theorem \ref{last:theorem}.

Our next group of examples shows that one cannot omit ``abelian'' in Theorem \ref{thm:C} and its corollaries, or replace it by the slightly weaker property ``nilpotent''.  More precisely, we give an example of a minimal locally precompact (consistently, also locally countably compact) nilpotent sequentially complete group satisfying \Zm\ that does not satisfy \Zgen.

\begin{example}\label{MMDD} 
(i)
Let $G$ be any infinite precompact sequentially complete group satisfying \Zm\ but failing property \Zgen; such a group can be constructed 
by applying  Corollary \ref{Bohr:proposition}.
Then {\em $L_G$ is a  minimal, locally precompact, sequentially complete,  nilpotent group of class 2 that satisfies \Zm\ but does not satisfy \Zgen\/}. 
Indeed, it follows from items (i), (ii) and (iv) of Lemma \ref{L_X:definition} that $L_G$ is a minimal, locally precompact, nilpotent group of class 2. 

(ii)
Let $G$ be the topological group from Example \ref{cc:example:distinguishing:Zs}(i).
Then {\em $L_G$ is a  minimal, locally pseudocompact, 
nilpotent group of class 2 that satisfies \Zm\ but does not satisfy \Zgen\/}.
Indeed, since $G$ is pseudocompact, $L_G$ is locally pseudocompact by Lemma \ref{from:global:to:local}(iii).  The rest of the properties of $L_G$ are derived as in item (i).

(iii)
{\em Under the assumption of MA, there exists a minimal, locally countably compact,  nilpotent group of class 2 that satisfies \Zm\ but does not satisfy \Zgen\/}. Indeed, one can take as $G$ a Boolean countably compact group without non-trivial convergent sequences mentioned in Example \ref{cc:example:distinguishing:Zs}(ii). Then $L_G$ becomes locally \cc\ by Lemma \ref{from:global:to:local}(i), in addition to the rest of the properties from item (i). 
\end{example}

In our next example we relax the strong condition \Zgen \ to the weaker condition \Zm, and local countable compactness will be relaxed to 
local pseudocompactness, in order to obtain an example in ZFC.
This example should be compared with Corollary \ref{corollary:last:theorem}. 

\begin{example} 

 {\em There exists a minimal, locally pseudocompact, locally connected, sequentially complete, nilpotent group of class 2 which satisfies \Zm\ but is not Lie\/}.
Indeed,  according to \cite[Corollary 5.6]{GM}, every abelian group of size $\leq 2^{2^\cont}$ admitting a  pseudocompact group topology, admits also a pseudocompact group topology without non-trivial convergent sequences. It follows that the torus group $\T$ admits a pseudocompact group topology $\tau$ without non-trivial convergent sequences. 
Clearly, $\tau$ is sequentially complete. Since every divisible pseudocompact group is connected \cite{W}, the topology $\tau$ must be connected.
Now we apply the ``minimalization machine'' to $(\T,\tau)$ taken as $G$.
The output $L_G$ is the desired group.
\end{example}

It is unclear if the group in the previous example satisfies property \Zgen. 

\begin{example}
{\em There exists a minimal, locally pseudocompact, nilpotent group of class 2 satisfying \Zcm\ which does not satisfy \Zm\/}.
Indeed, it suffices to take the group $G$ as in Example \ref{ex:Tk2012} and pass it through the ``minimalization machine''.
\end{example}

\section{Open questions}

Theorem \ref{last:theorem} leaves open the following question: 

\begin{question}
Is a connected, locally minimal, locally  precompact, sequentially complete group satisfying \Zcm\ a Lie group? 
\end{question}

We do not know whether one can extend Corollary \ref{subgroups:of:Lie:having:Zm} to the non-abelian case.

\begin{question}
Let $K$ be a Lie group and let $G$ be a subgroup $K$ satisfying \Zm.
\begin{itemize}
\item[(i)] Must $G$ be closed in $K$?
\item[(ii)] Is (i) true  when $K$ is compact? 
\item[(iii)] Is (i) true when $G$ is connected (and $K$ is compact)? 
\end{itemize}
\end{question}

Item (iii) of this question should be compared with Theorem \ref{thm:completion:is:a:Lie:group}.

Let us discuss the implications \Zgen\ $\to  Lie$ and \Zm\ $\to  Lie$ for sequentially complete locally minimal (abelian) groups. 
Recall that locally minimal abelian groups need not be locally precompact \cite{ACDD}. 
Since locally minimal locally precompact sequentially complete abelian groups satisfying \Zcm, are compact Lie groups by Theorem \ref{thm:C}, 
it makes sense to  discuss the question without the hypothesis of local precompactness. 
On the other hand, it easily follows from Fact \ref{minimality-criterion}, that every subgroup of a Lie group is locally minimal.
This is why we leave local minimality in the hypothesis of our next question.  

\begin{question}\label{last:question}
 Is every locally minimal sequentially complete abelian group without infinite closed zero-dimensional (metric) subgroups a Lie group?
\end{question}

The completion of a locally minimal (metrizable) group is again locally minimal (metrizable). Since the conclusion of our main theorems (Lie group) makes
 completeness and metrizability of the group a  necessary property, it is natural to  ask also the following question:
 
\begin{question} When is a locally minimal, complete  metric group satisfying 
\Zgen\ 
a Lie group?
\end{question}

Recall that a
topological group $G$ is said to be {\em totally minimal\/} if all Hausdorff quotients of $G$ are minimal. Totally minimal groups are precisely the topological groups satisfying the open mapping theorem. Clearly, all compact groups are totally
 minimal and all totally minimal groups are minimal. 
 
The next group of questions concerns the invertibility of the implication $Lie \ \to $ \Zgen.  

\begin{question}
\label{que:Zgen:not:Lie} 
\begin{itemize}
\item[(i)] Does there exist a pseudocompact abelian group $G$ which satisfies \Zgen\ but is not metrizable (and thus, is not a Lie group)?
What is the answer if one additionally assumes that $G$ is sequentially complete?
\item[(ii)] Does there exist a ZFC example of a \cc\ abelian group which satisfies  \Zgen\ but is not metrizable (and thus, is not a Lie group)?
\item[(iii)] Must a pseudocompact (totally) minimal (abelian) group satisfying \Zgen\ be a Lie group?
\item[(iv)]  Must a countably compact (totally) minimal group satisfying \Zgen\ be a Lie group?
\end{itemize}
\end{question}

The next group of questions concerns the invertibility of the implication \Zgen\ $\to $ \Zm. 

\begin{question}
\label{que:Zm:not:Zgen} 
\begin{itemize}
\item[(i)]  Does there exist a ZFC example of a \cc\ (abelian) group which satisfies  \Zm\ but does not satisfy \Zgen?
\item[(ii)]  Does there exist a pseudocompact (totally) minimal group which satisfies  \Zm\ but does not satisfy \Zgen?
\item[(iii)] Must a countably compact (totally) minimal group satisfying \Zm\ also satisfy \Zgen?
\end{itemize}
\end{question}

In connection with item (ii) we recall that minimal abelian groups which satisfy  \Zm\ are Lie (so satisfy \Zgen). So a group positively answering this item of Question \ref{que:Zm:not:Zgen} is necessarily non-abelian. 

The
answer to Questions \ref{que:Zgen:not:Lie}(iv) and \ref{que:Zm:not:Zgen}(iii) in the ``minimal'' option  is positive when the group in question is either abelian or almost connected; see Corollary \ref {cc:minimal:is:Lie}.

The following question was raised in \cite[Problem 23]{DS_OPIT}: 
Does every countably compact minimal group contain a non-trivial convergent sequence?
A counter-example to this question would be a \cc\ minimal group that satisfies \Zm\ but is not a Lie group.

\begin{question}\label{ques:conn:component}
Is there a pseudocompact (totally) minimal (abelian) group which satisfies \Zcm\ but fails \Zm?
\end{question}

According the Proposition \ref{locally:precompact:sequentially:complete}, the conditions \Zm\ and \Zcm\ are equivalent for locally precompact and sequentially complete groups. Hence the groups witnessing a positive answer to the last question cannot be sequentially complete.

\end{document}